

\documentclass {amsart}

\usepackage{amsfonts}
\usepackage{amssymb}

\newtheorem{thm}{Theorem}[section]
\newtheorem{cor}[thm]{Corollary}
\newtheorem{lem}[thm]{Lemma}

\theoremstyle{definition}

\newtheorem{exmp}{Example}[section]
\newtheorem{prob}{Problem}[section]

\theoremstyle{remark}
\newtheorem{rem}{Remark}[section]


\numberwithin{equation}{section}


\newcommand{\C}{\mathbb{C}}

\newcommand{\comp}{\operatorname{comp}}
\newcommand{\cD}{\mathcal{D}}
\newcommand{\Diff}{\mathrm{Diff}}

\newcommand{\cF}{\mathcal{F}}

\newcommand{\iG}{\mathit{\Gamma}}
\newcommand{\cH}{\mathcal{H}}
\renewcommand{\i}[1]{\raisebox{1pt}{$\stackrel{\scriptscriptstyle \circ}{#1}$}}

\newcommand{\loc}{\mathrm{loc}}

\newcommand{\N}{\mathbb{N}}

\newcommand{\iPhi}{\mathit{\Phi}}
\newcommand{\iPi}{\mathit{\Pi}}
\newcommand{\R}{\mathbb{R}}

\newcommand{\Sol}{\operatorname{\mathcal{S}}}
\newcommand{\supp}{\operatorname{supp}}
\renewcommand{\t}[1]{\tilde{#1}}


\begin{document}

\title[Mixed Problems with a Parameter]
      {Mixed Problems with a Parameter}


\author{A. Shlapunov}

\address[Alexander Shlapunov]
        {Krasnoyarsk State University
                                                 \\
         pr. Svobodnyi 79
                                                 \\
         660041 Krasnoyarsk
                                                 \\
         Russia}

\email{shlapuno@lan.krasu.ru}


\author{N. Tarkhanov}

\address[Nikolai Tarkhanov]
        {Universit\"{a}t Potsdam
                                                 \\
         Institut f\"{u}r Mathematik
                                                 \\
         Postfach 60 15 53
                                                 \\
         14415 Potsdam
                                                 \\
         Germany}

\email{tarkhanov@math.uni-potsdam.de}



\subjclass{Primary 35B25; Secondary 35J60}

\keywords{The Cauchy problem,
          a mixed problem,
          small parameter,
          the Helmholtz equation}



\begin{abstract}
Let
   $X$ be a smooth $n\,$-dimensional manifold and
   $D$ be an open connected set in $X$ with smooth boundary $\partial D$.
Perturbing the Cauchy problem for an elliptic system $Au = f$ in $D$ with data
on a closed set $\iG \subset \partial D$ we obtain a family of mixed problems
depending on a small parameter $\varepsilon > 0$.
Although the mixed problems are subject to a non-coercive boundary condition
on $\partial D \setminus \iG$ in general,
   each of them is uniquely solvable in an appropriate Hilbert space
   $\cD_{T}$
and
   the corresponding family $\{ u_{\varepsilon} \}$ of solutions approximates
   the solution of the Cauchy problem in $\cD_{T}$ whenever the solution
   exists.
We also prove that the existence of a solution to the Cauchy problem in
$\cD_{T}$ is equivalent to the boundedness of the family
$\{ u_{\varepsilon} \}$.
We thus derive a solvability condition for the Cauchy problem and an effective
method of constructing its solution.
Examples for Dirac operators in the Euclidean space $\R^n$ are considered.
In the latter case we obtain a family of mixed boundary problems for the
Helmholtz equation.
\end{abstract}

\maketitle



\tableofcontents


\section*{Introduction}
\label{s.Int}

This paper is based on the following simple observation.
Consider an operator equation
   $Tu = f$
with a bounded operator $T : H \to \t{H}$ in Hilbert spaces.
If there is a $u \in H$ satisfying $Tu = f$ then $f$ is orthogonal to the
null-space of the adjoint operator $T^{\ast}$ in $\t{H}$.
On the other hand, for $f \in (\ker T^{\ast})^{\perp}$ the equation $Tu = f$
is obviously equivalent to $T^{\ast} T u = T^{\ast} f$.
The latter need not have any solution, however, the slightly perturbed equation
   $T^{\ast} T u + \varepsilon u = v$
is uniquely solvable for any $v \in H$, provided that $\varepsilon > 0$.
Note that the solution of the equation can be effectively constructed, for the
operator $T^{\ast} T + \varepsilon$ is positive definite.
We thus get a family
$$
u_{\varepsilon} = (T^{\ast} T + \varepsilon)^{-1} T^{\ast} f
$$
in $H$, whose limit is a good candidate for the solution of $Tu = f$ that is
orthogonal to the null-space of $T$.
Indeed, if $v \in H$ satisfies $Tv = 0$ then by Lemma 12.1.25 of \cite{Tark36}
we get
\begin{eqnarray*}
   (u_{\varepsilon}, v)_{H}
 & = &
   (f, T (T^{\ast} T + \varepsilon)^{-1} v)_{\t{H}}
                                                 \\
 & = &
   (f, (T T^{\ast} + \varepsilon)^{-1} T v)_{\t{H}}
                                                 \\
 & = &
   0,
\end{eqnarray*}
as desired.
If $f = Tu$ for some $u \in H$, then
$
   u_{\varepsilon}
 = u - \varepsilon (T^{\ast} T + \varepsilon)^{-1} u
$
is obviously bounded in $H$.

Conversely, if the norm $\| u_{\varepsilon} \|_{H}$ is bounded uniformly in
$\varepsilon \ll 1$ then $u_{\varepsilon}$ converges for $\varepsilon \searrow
0$ to the only solution $u \in H$ of $Tu = f$ that is orthogonal to $\ker T$.

In this way we derive a solvability condition and an approximate solution to
the equation $Tu = f$ in $H$.
We refer the reader to Section 12.1.5 of \cite{Tark36} for an extremal property
of $u_{\varepsilon}$.

When applying the approach in the study of the Cauchy problem for solutions of
an elliptic equation $Au = f$, one needs to complete it by refined analysis.
By the above, the calculus of the Cauchy problems which are ill-posed by the
very nature can be elaborated in the framework of the calculus of operators
$T^{\ast} T + \varepsilon I$ depending on a parameter $\varepsilon > 0$.
In order to avoid sophisticated adjoint operators one uses $L^{2}\,$-scalar
products which necessarily leads to unbounded closed operators with dense
domains.
Hence, it requires much more efforts to make use of the construction described
above.

The operator $T$ is given the domain consisting of those functions $u$ in $D$
which are square integrable along with $Au$ and whose Cauchy data with respect
to $A$ vanish on a closed set $\iG \subset \partial D$.
Then the domain of the adjoint operator $T^{\ast}$ is proved to consist of
square integrable functions $g$ on $D$, such that the Cauchy data of $g$ with
respect to $A^{\ast}$ vanish in the complement of $\iG$.
It follows that the natural domain of the Laplacian $T^{\ast} T$ is a subspace
of square integrable functions $u$ on $D$, such that the Cauchy data of $u$
with respect to $A$ vanish on $\iG$ and the Cauchy data of $Tu$ with respect
to $A^{\ast}$ vanish on $\partial D \setminus \iG$.
This gives rise to a mixed boundary value problem for the elliptic operator
$A^{\ast} A$ in $D$ similar to the classical Zaremba problem \cite{ZareX}.

Our paper demonstrates rather strikingly that the calculus of Cauchy problems
for solutions of elliptic equations just amounts to the calculus of mixed
boundary value problems for elliptic equations with a parameter, cf.
   \cite{SimaX}.
While this observation seems to be of purely mathematical interest, the
explicit solutions we construct by the classical Fourier method may be of
practical importance in applications.

\section{Preliminaries}
\label{s.2}

Let $X$ be a $C^{\infty}$ manifold of dimension $n$ with a smooth boundary
$\partial X$.
We tacitly assume that it is embedded into a smooth closed manifold $\t{X}$ of
the same dimension.

For any smooth $\C\,$-vector bundles $E$ and
                                     $F$
over $X$, we write
   $\Diff^{m} (X; E, F)$
for the space of all linear partial differential operators of order $\leq m$
between sections of $E$ and $F$.

We denote by $E^{\ast}$ the conjugate bundle of $E$.
Any Hermitian metric $(.,.)_{x}$ on $E$ gives rise to a sesquilinear bundle
isomorphism
   $\ast_{E} \! : E \to E^{\ast}$
by the equality
   $\langle \ast_{E} v, u \rangle_{x} = (u,v)_{x}$
for all sections $u$ and $v$ of $E$.

Pick a volume form $dx$ on $X$, thus identifying dual and conjugate bundles.
For $A \in \Diff^{m} (X; E, F)$, denote
   by $A' \in \Diff^{m} (X; F^{\ast}, E^{\ast})$ the transposed operator and
   by $A^{\ast} \in \Diff^{m} (X; F, E)$ the formal adjoint operator.
We have
   $A^{\ast} = \ast_{E}^{-1} A' \ast_{F}$,
cf. \cite[4.1.4]{Tark36} and elsewhere.

For an open set $O \subset X$, we write
   $L^{2} (O, E)$
for the Hilbert space of all measurable sections of $E$ over $O$ with a finite
norm
   $(u, u)_{L^{2} (O, E)} = \int_{O} (u,u)_{x} dx$.
We also denote by $H^{s} (O, E)$ the Sobolev space of distribution sections of
$E$ over $O$, whose weak derivatives up to order $s$ belong to $L^{2} (O, E)$.

Given any open set $O$ in $X^{\circ}$, the interior of $X$, we let
   $\Sol_{A} (O)$
stand for the space of weak solutions to the equation $Au = 0$ in $O$.
Obviously, the subspace of $H^{s} (O, E)$ consisting of all weak solutions to
$Au = 0$ is closed.

Write $\sigma^{m} (A)$ for the principal homogeneous symbol of the operator
$A$,
   $\sigma^{m} (A)$ living on the (real) cotangent bundle $T^{\ast} X$ of $X$.
From now on we assume that $\sigma^{m} (A)$ is injective away from the zero
section of $T^{\ast} X$.
Hence it follows that the Laplacian $A^{\ast} A$ is an elliptic differential
operator of order $2m$ on $X$.

If the dimensions of $E$ and $F$ are equal then $A$ is elliptic, too.
Otherwise we will call it overdetermined elliptic operator.

We can assume without restriction of generality that $A$ is included into a
compatibility complex of differential operators
   $A^{i} \in \Diff^{m_{i}} (X; E^{i}, E^{i+1})$
over $X$, where $i = 0, 1, \ldots, N$ and
                $A^{0} = A$.
This complex is elliptic in a natural way,
   see for instance \cite[4.1.2]{Tark35}).
If $A$ is elliptic then the compatibility complex is trivial, i.e.,
   $A^{i} = 0$ for all $i > 0$.

Let
   $D$ be a relatively compact domain in $X^{\circ}$ with a smooth boundary
   $\partial D$.
For $u \in L^{2} (D, E)$ we always regard $Au$ as a distribution section of
$F$ over $D$.

A large class of operators $A$ possess the following property which is usually
referred to as Unique Continuation Property,
\begin{description}
   \item [ $(U)_{s}$ ]
Given any domain $D \subset X^{\circ}$,
   if $u \in \Sol_{A} (D)$ vanishes on a non-empty open subset of $D$
then $u \equiv 0$ in all of $D$.
\end{description}

This property implies in particular the existence of a left fundamental
solution for $A$ in the interior of $X$.

Consider the Hermitian form
$$
D (u,v) = (u,v)_{L^{2} (D,E)} +(Au,Av)_{L^{2} (D,F)}
$$
on the space $C^\infty (\overline{D},E)$ of all smooth sections of $E$ over
the closure of $D$.
The functional
   $D (u) = \sqrt{D (u,u)}$
is usually called the Graph Norm related to the unbounded operator
   $A : L^{2} (D,E) \to L^{2} (D,F)$.
Write $\cD_{A}$ for the completion of $C^{\infty} (\overline{D}, E)$ with
respect to $D (\cdot)$.
Then $\cD_{A}$ is a Hilbert space with the scalar product $D (.,.)$, and
$A$ maps $\cD_{A}$ continuously to $L^{2} (D,F)$.

Note that if $A = \nabla$ is the gradient operator in $\R^n$ then
   $\cD_{A} = H^{1} (D)$.
Let us clarify what kind elements are in this space in the general case.

To this end  we fix a Dirichlet system
   $B_{j}$,  $j = 0, 1, \ldots, m-1$,
of order $m-1$ on $\partial D$.
More precisely, each $B_{j}$ is a differential operator of type $E \to F_{j}$
and order $m_{j} \leq m-1$ in a neighbourhood $U$ of $\partial D$, where
$m_{i} \neq m_{j}$ for $i \neq j$.
Moreover, the symbols $\sigma^{m_{j}} (B_{j})$,
   if restricted to the conormal bundle of $\partial D$,
have ranks equal to the dimensions of $F_{j}$.
Set
$$
t (u) = \oplus_{j=0}^{m-1} B_{j} u
$$
for $u \in H^{m} (D,E)$.

For $s > 0$ we denote by
   $H^{-s} (\partial D, F_{j})$
the dual of the space $H^{s} (\partial D, F_{j})$ with respect to the pairing
in $L^{2} (\partial D, F_{j})$.

\begin{lem}
\label{l.HDA}
For every $u \in \cD_{A}$, we have
   $u \in H^{m}_{\loc} (D,E)$.
Moreover  $t (u)$ has weak boundary values on $\partial D$ belonging to
   $\oplus_{j=0}^{m-1} H^{-m_{j}-1/2} (\partial D, F_{j})$.
\end{lem}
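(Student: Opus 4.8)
The plan is to establish interior regularity first and then analyse the boundary behaviour of the trace operator $t$. Since $u\in\cD_{A}$ means $u$ is an $L^{2}(D,E)\,$-limit of smooth sections whose graph norms converge, we have $u\in L^{2}(D,E)$ and $Au\in L^{2}(D,F)$ in the distributional sense. The operator $A^{\ast}A$ is elliptic of order $2m$ on $X$, and $A^{\ast}(Au)\in H^{-m}_{\loc}(D,E)$; more directly one uses that $A$ itself is (overdetermined) elliptic, so the a priori estimate for $A$ on interior subdomains gives, for any $O\Subset O'\Subset D$,
\[
   \| u \|_{H^{m}(O,E)}
 \leq C \left( \| Au \|_{L^{2}(O',F)} + \| u \|_{L^{2}(O',E)} \right),
\]
which proves $u\in H^{m}_{\loc}(D,E)$. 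This part is routine elliptic regularity and is not the main obstacle.

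The substantive point is the existence of weak boundary values for $t(u)=\oplus_{j} B_{j}u$ in the negative Sobolev spaces $H^{-m_{j}-1/2}(\partial D,F_{j})$, since $u$ need not lie in $H^{m}(D,E)$ up to the boundary. The approach is a duality/integration-by-parts argument based on a Green formula for the Dirichlet system $\{B_{j}\}$. Since $\{B_{j}\}_{j=0}^{m-1}$ is a Dirichlet system of order $m-1$ on $\partial D$, one can complete it to a Dirichlet system of order $2m-1$ on $\partial D$ and construct, via the Green formula for the Laplacian $A^{\ast}A$ (or more elementarily for $A$ paired against a first-order Green operator), adjoint boundary operators $C_{j}$ together with a Green identity of the shape
\[
   (Au, v)_{L^{2}(D,F)} - (u, A^{\ast}v)_{L^{2}(D,E)}
 = \sum_{j=0}^{m-1} \langle B_{j} u, C_{j} v \rangle_{\partial D}
\]
valid for $u,v\in C^{\infty}(\overline{D})$, where $C_{j}$ maps onto sections of $F_{j}$ with a loss of $m_{j}$ derivatives relative to $v$. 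One then reads this identity as defining the functional $v\mapsto \sum_{j}\langle B_{j}u, C_{j}v\rangle$ by the left-hand side, which makes sense for $u\in\cD_{A}$ because both $(Au,v)$ and $(u,A^{\ast}v)$ are controlled by $D(u)$ once $v$ ranges over a suitable space. Choosing $v$ so that $C_{j}v$ realises an arbitrary element of $H^{m_{j}+1/2}(\partial D,F_{j})$ while the other components vanish — this is possible because the full system is Dirichlet, hence the combined trace map is surjective onto the product of boundary Sobolev spaces with a bounded right inverse — one obtains a bounded functional on $H^{m_{j}+1/2}(\partial D,F_{j})$, i.e. an element of $H^{-m_{j}-1/2}(\partial D,F_{j})$, which is by construction the weak boundary value of $B_{j}u$. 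Passing to the limit from smooth $u$ shows this assignment is continuous on $\cD_{A}$ and agrees with the classical trace when $u$ is smooth.

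The main obstacle is the bookkeeping in the Green formula: one must produce the dual Dirichlet system $\{C_{j}\}$ with the correct orders and verify that the bilinear boundary form $\sum_{j}\langle B_{j}u,C_{j}v\rangle$ is nondegenerate in the appropriate sense, so that prescribing $C_{j}v$ freely in $H^{m_{j}+1/2}(\partial D,F_{j})$ is legitimate; this rests on the standard theory of Dirichlet systems and Green formulae (as in \cite[Ch.~9]{Tark36} or the analogous sources cited), but it requires care since $A$ may be overdetermined, so one works with $A^{\ast}A$ rather than $A$ to stay in the square elliptic setting. Everything else — the interior estimate, the density of $C^{\infty}(\overline{D},E)$, and the limiting argument — is standard.
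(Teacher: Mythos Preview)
Your argument is correct, but the route you take differs from the paper's. You define $t(u)$ directly by duality: use the Green formula for $A$,
\[
   (Au,g)_{L^{2}(D,F)}-(u,A^{\ast}g)_{L^{2}(D,E)}
   =\sum_{j}\langle B_{j}u,\,C_{j}g\rangle_{\partial D},
\]
(note that the test section $g$ here is a section of $F$, not of $E$; your notation is a bit loose on this point), observe that the left side is controlled by $D(u)\,\|g\|_{H^{m}(D,F)}$, and then invoke surjectivity of the adjoint Dirichlet trace $g\mapsto\oplus_{j}C_{j}g$ onto $\oplus_{j}H^{m_{j}+1/2}(\partial D,F_{j})$ with a bounded right inverse. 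This yields $B_{j}u\in H^{-m_{j}-1/2}(\partial D,F_{j})$ as a continuous functional, and density of smooth sections handles consistency. The paper instead decomposes $u=G(Au)+\bigl(u-G(Au)\bigr)$, where $G$ is the Dirichlet Green operator for $A^{\ast}A$; the first summand lies in $\i{H}{}^{m}(D,E)$ and has vanishing trace, while the second is an $L^{2}$ solution of $A^{\ast}A\,w=0$ in $D$, and the paper appeals to a cited result (Lemma~9.4.4 of \cite{Tark36}) on weak boundary values of elliptic solutions with finite-order growth to conclude. Your approach is more self-contained and needs only the standard trace theory for Dirichlet systems; the paper's approach imports a heavier external lemma but produces the structurally useful splitting into an $\i{H}{}^{m}$ part and a harmonic part, and it delivers the boundary values in the ``weak limit from inside'' sense used later in the proof of Lemma~\ref{l.HDA.t.weakly}, whereas your duality definition would require a short extra argument to be identified with that limiting notion.
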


\begin{proof}
Fix an element $u \in \cD_{A}$.
Since $A$ is elliptic we deduce from $Au \in L^{2} (D,F)$ that
   $u \in H^{m}_{\loc} (D,E)$.

As usual, we denote by
   $H^{-m} (D,E)$
the completion of $C^{\infty} (\overline{D},E)$ with respect to the norm
$$
|u|_{-m} = \sup_{v \in C^{\infty} (\overline{D},E) \atop
                 t (v) = 0}
           \frac{|(u,v)_{L^{2} (D,E)}|}{\| v \|_{H^{m} (D,E)}}.
$$
Then we easily verify that $A^{\ast}$ extends to a map of $L^{2} (D,F)$ to
                                                          $H^{-m} (D,E)$,
more explicitly,
$$
(A^{\ast} f, v) := (f, Av)_{L^{2} (D,F)}
$$
for each $f \in L^{2} (D,F)$ and
         $v \in \i{H}{}^{m} (D,E)$.

By the very definition, the distribution $A^{\ast} f$ is always orthogonal under
the pairing in $L^{2} (D,E)$ to the null-space of the Dirichlet problem for
$A^{\ast} A$.
Therefore, for every $f \in L^{2} (D,F)$ there exists a section
   $Gf \in H^{m} (D,E)$
satisfying
   $A^{\ast} A\, Gf = A^{\ast} f$ in $D$ and
   $t (Gf) = 0$ on $\partial D$,
see for instance \cite{SchuShlaTark2}.
Any $u \in \cD_{A}$ can be thus presented in the form
$$
u = G\, Au + (u - G\, Au).
$$
By the construction, we get
   $G\, Au \in \i{H}{}^{m} (D,E)$ and
   $u - G\, Au \in \cD_{A} \cap \Sol_{A^{\ast} A} (D)$.
As $u - G\, Au \in L^{2} (D,E)$ is of finite order growth near $\partial D$,
we conclude by Lemma 9.4.4 of \cite{Tark36} that
   $t (u - G\, Au)$
has weak boundary values on $\partial D$  belonging to
   $\oplus_{j=0}^{m-1} H^{-m_{j}-1/2} (\partial D, F_{j})$.

As
   $t (G\, Au) \in \oplus_{j=0}^{m-1} H^{m-m_{j}-1/2} (\partial D, F_{j})$
vanishes on the boundary even in the usual sense for Sobolev spaces,
   the proof is complete.
\end{proof}

Let
   $\{ C_{j} \}_{j=0}^{m-1}$
be the adjoint Dirichlet system for $\{ B_{j} \}_{j=0}^{m-1}$ with respect to
the Green formula for $A$
   (see for instance \cite[Remark 9.2.6]{Tark36}).
For $g \in H^{m} (D, F)$, we set
$$
n (g) = \oplus_{j=0}^{m-1} C_{j} g.
$$

Suppose $\iG$ is a closed subset of $\partial D$.
The cases $\iG = \emptyset$ and
          $\iG = \partial D$
are permitted, too.
We write $\iG^{\circ}$ for the interior of $\iG$ in the relative topology of
$\partial D$.

Given any $u \in L^{2} (D,E)$ with $Au \in L^2 (D,F)$, we say that $t (u) = 0$
on the set $\iG$ if
\begin{equation}
\label{eq.def.t}
\int _{D} \left( (Au, g)_{x} - (u, A^{\ast} g)_{x} \right) dx = 0
\end{equation}
for all sections $g \in C^{\infty} (\overline{D}, F)$ satisfying $n (g) = 0$
on $\partial D \setminus \iG^{\circ}$.

\begin{lem}
\label{l.HDA.t.weakly}
If
   $u \in \cD_{A}$ and
   $t (u) = 0$ on $\iG$
then
   $u \in H^{m}_{\loc} (D \cup \iG^{\circ}, E)$.
\end{lem}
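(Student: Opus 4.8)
The plan is to reduce the statement to a boundary regularity result for the coercive Dirichlet problem associated with the Laplacian $A^{\ast} A$. By \lemref{l.HDA} we already know that $u \in H^{m}_{\loc} (D, E)$, so only regularity up to the relatively open set $\iG^{\circ}$ is at issue; as the assertion is local, it is enough to fix a point $x_{0} \in \iG^{\circ}$ and to show that $u$ belongs to $H^{m}$ in some neighbourhood of $x_{0}$ in $\overline{D}$.

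First I would carry over the splitting used in the proof of \lemref{l.HDA}: write $u = G\, Au + w$ with $w = u - G\, Au$. The term $G\, Au$ lies in $\i{H}{}^{m} (D, E) \subset H^{m} (D, E)$ and is therefore harmless, so everything reduces to the section $w \in \cD_{A} \cap \Sol_{A^{\ast} A} (D)$, which, being the difference of two elements of $L^{2} (D, E)$, is of finite order of growth near $\partial D$. Moreover $w$ inherits the boundary condition: since $t (G\, Au) = 0$ on all of $\partial D$, the Green formula for $A$ shows that the integral in \eqref{eq.def.t} with $u$ replaced by $G\, Au$ vanishes for every admissible $g$, whence by linearity $t (w) = 0$ on $\iG$. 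By \lemref{l.HDA} applied to $w$, the tuple $t (w)$ possesses weak boundary values on $\partial D$, and testing \eqref{eq.def.t} against sections $g$ with $n (g)$ supported in $\iG^{\circ}$ shows, via the weak form of the Green formula, that these weak boundary values vanish on $\iG^{\circ}$.

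Next I would invoke local boundary regularity for the coercive Dirichlet problem of $A^{\ast} A$. Near $x_{0}$ the section $w$ is a square integrable weak solution of the elliptic equation $A^{\ast} A\, w = 0$ of order $2m$, subject to the homogeneous condition $t (w) = 0$ on $\partial D$ in the weak sense of boundary values just described. Since $\sigma^{2m} (A^{\ast} A) (\xi) = \sigma^{m} (A) (\xi)^{\ast} \sigma^{m} (A) (\xi)$ is positive definite for $\xi \neq 0$, the operator $A^{\ast} A$ is properly elliptic, and the Dirichlet system $B_{0}, \ldots, B_{m-1}$ of order $m-1$ satisfies the Shapiro--Lopatinskii condition with respect to it; in other words $(A^{\ast} A, t)$ is nothing but the Dirichlet problem for $A^{\ast} A$, which is a coercive boundary value problem. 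Local regularity up to the boundary for this problem then yields $w \in H^{m}$ in a neighbourhood of $x_{0}$ in $\overline{D}$ --- in fact $w$ is smooth there, the limited smoothness of $u = G\, Au + w$ being caused solely by the term $G\, Au \in \i{H}{}^{m} (D, E)$. Covering $\iG^{\circ}$ by such neighbourhoods and adding the interior regularity of \lemref{l.HDA} gives $u \in H^{m}_{\loc} (D \cup \iG^{\circ}, E)$, as required.

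The delicate point --- and the main obstacle --- is that the Dirichlet datum $t (w) = 0$ is available only in the weak sense of boundary values, that is in the negative Sobolev spaces $\oplus_{j} H^{-m_{j}-1/2} (\partial D, F_{j})$, rather than as a genuine trace of an $H^{m}$-section; so the classical boundary regularity for the Dirichlet problem has to be used in the version valid on the whole Sobolev scale. Concretely, one first shows, by a bootstrap based on tangential difference quotients along $\iG^{\circ}$ combined with the coercive a priori estimates for the Dirichlet problem at negative Sobolev exponents, that a square integrable weak solution of $A^{\ast} A\, w = 0$ whose weak boundary values $t (w)$ vanish on an open part of $\partial D$ belongs to $H^{s}$ near that part for some $s > 0$, and then iterates to reach $s = m$. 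This is precisely the kind of statement established in the analysis of weak boundary values in \cite{Tark36} (cf.\ the circle of ideas around Lemma~9.4.4 there), which I would invoke rather than reprove.
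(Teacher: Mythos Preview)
Your proof is correct and follows essentially the paper's own argument: one shows that the weak boundary values of $t$ vanish on $\iG^{\circ}$ and then invokes the local regularity theorem for the Dirichlet problem of $A^{\ast} A$ (the paper cites Theorem~9.3.17 of \cite{Tark36}). The only difference is cosmetic --- you split off $G\,Au$ first and work with the harmonic remainder $w$, whereas the paper applies the regularity theorem directly to $u$, using merely that $A^{\ast} A u \in H^{-m} (D,E)$ together with the weak vanishing of $t(u)$ on $\iG$.
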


In particular, $t (u)$ has zero boundary values on $\iG^{\circ}$ in the usual
sense of Sobolev spaces.

\begin{proof}
The case $\iG = \emptyset$ has been already treated in Lemma \ref{l.HDA}.
Assume that $\iG$ is non-empty.

Choose a smooth real-valued function $\varrho$ on $X$ with the property that
\begin{equation}
\label{eq.rho}
D = \{ x \in X :\ \varrho (x) < 0 \}
\end{equation}
and $\nabla \varrho (x) \neq 0$ for all $x \in \partial D$.
Set
$
D_{\varepsilon} = \{ x \in X :\ \varrho (x) < \varepsilon \},
$
then
   $D_{-\varepsilon} \Subset D \Subset D_{\varepsilon}$ for all sufficiently
   small $\varepsilon > 0$,
and
   the boundary of $D_{\pm \varepsilon}$ is as smooth as the boundary of $D$.

We first show that the weak boundary values of $t (u)$ vanish on $\iG$ in the
sense that
$$
   \lim_{\varepsilon \to 0+}
   \int_{\partial D_{-\varepsilon}}
   \sum_{j=0}^{m-1} (B_{j} u, g_{j})_{x} ds
 = 0
$$
for all
   $g_{j} \in C^{\infty} (U, F_{j})$, $j = 0, 1, \ldots, m-1$,
satisfying $(\supp g_j) \cap \partial D \subset \iG$.
To this end, choose a function $g \in C^{\infty} (\overline{D}, F)$, such
that
   $n (g) = \oplus_{j=0}^{m-1} g_j$
on $\partial D$,
   cf. Lemma 9.3.5 in \cite{Tark36}.
Since $u \in L^{2} (D,E)$ and
      $Au \in L^{2} (D,F)$,
we obtain by the Green formula
\begin{eqnarray*}
   \lim _{\varepsilon \to 0+}
   \int_{\partial D_{-\varepsilon}}
   \sum_{j=0}^{m-1}
   (B_{j} u, g_{j})_x ds
 & = &
   \lim_{\varepsilon \to 0+}
   \int_{D_{-\varepsilon}}
   \left( (Au,g)_{x} - (u,A^{\ast} g)_{x} \right) dx
                                                 \\
 & = &
   \int _{D} \left( (Au,g)_x - (u,A^{\ast} g)_x \right) dx
                                                 \\[.4cm]
 & = &
   0
\end{eqnarray*}
because
   $t (u) = 0$ on $\iG$ in the sense of (\ref{eq.def.t})
and
   $g \in C^{\infty} (\overline{D}, F)$ satisfies $n (g) = 0$ on
   $\partial D \setminus \iG^{\circ}$.

We thus have
   $A^{\ast} A u \in H^{-m} (D,E)$
and
   the weak boundary values of $t (u)$ vanish on $\iG$.
As
   $A^{\ast} A$ is an elliptic operator of order $2m$
and
   $u \mapsto t (u)$ is a Dirichlet system of order $m-1$,
we conclude
   using the local regularity theorem for solutions of the Dirichlet problem
   for $A^{\ast} A$
that
   $u \in H^{m}_{\loc} (D \cup \iG^{\circ})$
(see for instance Theorem 9.3.17 of \cite{Tark36}), as desired.
\end{proof}

The proof actually shows that for sections $u \in L^{2} (D,E)$ with
                                           $Au \in L^{2} (D,F)$
the equality (\ref{eq.def.t}) just amounts to saying that the weak boundary
values of $t (u)$ vanish on $\iG^{\circ}$.

Let $\cD_{T}$ stand for the completion of the space of all sections $u$ in
$C^{\infty} (\overline{D},E)$, satisfying $t (u) = 0$ on $\iG$, with respect
to the norm $u \mapsto D (u)$.
By the very definition, $\cD_{T}$ is a closed subspace in $\cD_{A}$, and
it is a Hilbert space itself with the induced Hilbert structure.

It is well known that if $\iG$ is the whole boundary then
$
\cD_{T} = \i{H}{}^m (D,E),
$
the closure of $C^{\infty}_{\comp} (D,E)$ in $H^{m} (D,E)$.

\begin{lem}
\label{l.HD}
If  $u \in \cD_{T}$
then $t (u) = 0$ on $\iG$ in the sense of (\ref{eq.def.t}).
\end{lem}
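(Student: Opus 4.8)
The plan is to read off the result directly from the way $\cD_{T}$ was constructed: membership of $u$ in $\cD_{T}$ means, by definition, that $u$ is a $D(\cdot)$-limit of smooth sections for which (\ref{eq.def.t}) holds, while (\ref{eq.def.t}), with the test section $g$ held fixed, is the vanishing of a functional that is continuous in the graph norm. So the statement is really a closedness assertion. Concretely, given $u \in \cD_{T}$, I would first choose a sequence $u_{\nu} \in C^{\infty}(\overline{D}, E)$ with $t(u_{\nu}) = 0$ on $\iG$ in the sense of (\ref{eq.def.t}) and $D(u_{\nu} - u) \to 0$; such a sequence exists precisely because $\cD_{T}$ is the completion of those smooth sections. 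In particular $u_{\nu} \to u$ in $L^{2}(D, E)$ and $A u_{\nu} \to A u$ in $L^{2}(D, F)$ simultaneously, the latter limit coinciding with the distributional $A u$ since $L^{2}$-convergence implies convergence in $\mathcal{D}'$.

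Next I would fix an arbitrary admissible test section, i.e. $g \in C^{\infty}(\overline{D}, F)$ with $n(g) = 0$ on $\partial D \setminus \iG^{\circ}$. Because $g$ and $A^{\ast} g$ are fixed sections, bounded on the compact set $\overline{D}$, the map
$$
(v, w) \longmapsto \int_{D} \left( (w, g)_{x} - (v, A^{\ast} g)_{x} \right) dx
$$
is a bounded linear functional on $L^{2}(D, E) \times L^{2}(D, F)$, hence continuous in the graph norm on the graph of $A$, with $|\,\cdot\,| \leq \left( \| g \|_{L^{2}(D,F)} + \| A^{\ast} g \|_{L^{2}(D,E)} \right) D(\cdot)$. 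Its value at $(u_{\nu}, A u_{\nu})$ is $0$ for every $\nu$ by (\ref{eq.def.t}) applied to $u_{\nu}$, so letting $\nu \to \infty$ and invoking the two $L^{2}$-convergences from the first step yields
$$
\int_{D} \left( (A u, g)_{x} - (u, A^{\ast} g)_{x} \right) dx = 0 .
$$
Since $g$ was arbitrary subject to $n(g) = 0$ on $\partial D \setminus \iG^{\circ}$, this is exactly (\ref{eq.def.t}) for $u$, that is, $t(u) = 0$ on $\iG$.

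There is no serious obstacle here; the only points that deserve a word of care are the identification of an element of $\cD_{A}$ — and hence of its closed subspace $\cD_{T}$ — with the pair $(u, A u) \in L^{2}(D, E) \times L^{2}(D, F)$, which rests on the closability of the differential operator $A$, and the elementary observation that the smallness of $D(u_{\nu} - u)$ controls both $\| u_{\nu} - u \|_{L^{2}(D,E)}$ and $\| A u_{\nu} - A u \|_{L^{2}(D,F)}$ at once. Equivalently, one may phrase the whole argument in one line: the functionals $u \mapsto \int_{D} \big( (A u, g)_{x} - (u, A^{\ast} g)_{x} \big)\, dx$, indexed by the admissible $g$, are continuous on $\cD_{A}$ and vanish on a $D(\cdot)$-dense subset of $\cD_{T}$, hence vanish on all of $\cD_{T}$.
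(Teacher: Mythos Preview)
Your argument is correct and is essentially the paper's own proof: choose a smooth approximating sequence in the graph norm and pass to the limit in the bilinear expression, which is continuous on $\cD_{A}$. The only cosmetic difference is that the paper writes out the Green formula explicitly to see that the integral vanishes for each smooth approximant (splitting the boundary integral over $\iG$ and $\partial D \setminus \iG^{\circ}$), whereas you invoke (\ref{eq.def.t}) for $u_{\nu}$ directly; for smooth $u_{\nu}$ with $t(u_{\nu})=0$ on $\iG$ in the classical sense this is of course the same thing.
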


\begin{proof}
If $u \in \cD_{T}$ then there exists a sequence  $\{ u_k \}_{k \in \N}$ in
$C^{\infty} (\overline{D},E)$ satisfying $t (u_{k}) = 0$ on $\iG$, such that
$$
\lim_{k \to \infty} D (u_k - u) = 0.
$$
Hence
\begin{eqnarray*}
   \int_{D} \left( (Au,g)_x - (u,A^{\ast} g)_x \right)dx
 & = &
   \lim_{k \to \infty}
   \int_{D} \left( (A u_k,g)_x - (u_k,A^{\ast} g)_x \right) dx
                                                 \\
 & = &
   \lim_{k \to \infty}
   \int_{\partial D}
   \sum_{j=0}^{m-1}
   (B_j u_k, C_j g)_x ds
                                                 \\[.3cm]
 & = &
   0
\end{eqnarray*}
for all $g \in C^{\infty} (\overline{D}, F)$ satisfying $n (g) = 0$ on
$\partial D \setminus \iG^{\circ}$, because $t (u_k) = 0$ on $\iG$.
Therefore, $t (u) = 0$ on $\partial \iG$.
\end{proof}

We are now in a position to characterise the space $\cD_{T}$ in a much more
convenient way.

\begin{thm}
\label{t.HD}
As defined above, $\cD_{T}$ is a closed subspace of $\cD_{A}$ consisting of
all $u \in \cD_{A}$ satisfying $t (u) = 0$ on $\iG$.
\end{thm}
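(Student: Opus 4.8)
The plan is to identify $\cD_{T}$ with the set
$\mathcal{W}:=\{\,u\in\cD_{A}:\ t(u)=0\text{ on }\iG\,\}$ by proving the two inclusions separately. The inclusion $\cD_{T}\subseteq\mathcal{W}$ is the easy one. First, $\mathcal{W}$ is a closed subspace of $\cD_{A}$: the defining relation $(\ref{eq.def.t})$ is linear in $u$ and survives passage to a $\cD_{A}$-limit, since then $u$ converges in $L^{2}(D,E)$ and $Au$ in $L^{2}(D,F)$. Second, every $u\in C^{\infty}(\overline{D},E)$ with $t(u)=0$ on $\iG$ belongs to $\mathcal{W}$: in the Green formula the integrand $\sum_{j}(B_{j}u,C_{j}g)_{x}$ vanishes identically on $\partial D$ when $n(g)=0$ on $\partial D\setminus\iG^{\circ}$, because at a boundary point either $B_{j}u=0$ (the point lies in $\iG$) or $C_{j}g=0$ (the point lies in $\partial D\setminus\iG\subseteq\partial D\setminus\iG^{\circ}$). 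As $\cD_{T}$ is by definition the $\cD_{A}$-closure of the set $\mathcal{S}$ of these smooth sections, we get $\cD_{T}\subseteq\mathcal{W}$. For the reverse inclusion, since $\mathcal{S}$ is dense in $\cD_{T}$ and contained in $\mathcal{W}$, and $\mathcal{W}$ is a subspace, it suffices to show that any $v\in\mathcal{W}$ orthogonal to $\mathcal{S}$ in the scalar product $D(\cdot,\cdot)$ of $\cD_{A}$ is zero.

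So fix such a $v$. Testing $D(v,\varphi)=0$ against $\varphi\in C^{\infty}_{\comp}(D,E)$ gives $A^{\ast}Av+v=0$ in $D$; since $v\in L^{2}(D,E)$ this means $Av\in L^{2}(D,F)$ with $A^{\ast}(Av)=-v\in L^{2}(D,E)$, so $Av$ has finite graph norm with respect to $A^{\ast}$ and hence possesses weak boundary values $n(Av)$ on $\partial D$ (the $A^{\ast}$-counterpart of $\lemref{l.HDA}$). I would then apply the weak Green formula to the pair $(\varphi,Av)$, valid for all $\varphi\in C^{\infty}(\overline{D},E)$:
\[
(A\varphi,Av)_{L^{2}(D,F)}-(\varphi,A^{\ast}(Av))_{L^{2}(D,E)}=\langle t(\varphi),n(Av)\rangle_{\partial D}.
\]
Substituting $A^{\ast}(Av)=-v$ and taking complex conjugates, the left-hand side becomes $D(v,\varphi)$, which is $0$ for $\varphi\in\mathcal{S}$. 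Because the Dirichlet system $t$ is surjective onto smooth boundary data, $\mathcal{S}$ contains sections realising any prescribed $t(\varphi)$ supported in the open set $\partial D\setminus\iG$; varying these we conclude $n(Av)=0$ on $\partial D\setminus\iG$.

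Finally I would apply the weak Green formula once more, to the pair $(v,Av)$:
\[
D(v,v)=(Av,Av)_{L^{2}(D,F)}+(v,v)_{L^{2}(D,E)}=(Av,Av)_{L^{2}(D,F)}-(v,A^{\ast}(Av))_{L^{2}(D,E)}=\langle t(v),n(Av)\rangle_{\partial D}.
\]
By the remark following $\lemref{l.HDA.t.weakly}$ the weak boundary values of $t(v)$ vanish on $\iG^{\circ}$, while $n(Av)=0$ on $\partial D\setminus\iG$; hence the supports of the two boundary data meet only in the relative boundary $\iG\setminus\iG^{\circ}$, and the pairing is $0$. Therefore $D(v)=0$, i.e. $v=0$, which gives $\mathcal{W}\subseteq\cD_{T}$ and completes the proof.

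The step I expect to be the real obstacle is the very last one — justifying that $\langle t(v),n(Av)\rangle_{\partial D}=0$ — since a priori both factors are only weak boundary values in negative-order Sobolev spaces and their supports touch precisely along the edge $\iG\setminus\iG^{\circ}$. The way I would handle it is to compute the pairing as $\lim_{\varepsilon\to0+}\int_{\partial D_{-\varepsilon}}\sum_{j}(B_{j}v,C_{j}(Av))_{x}\,ds$ along the exhaustion $(\ref{eq.rho})$, on which $v$ and $Av$ are smooth, and to exploit the refined regularity at hand: $v\in H^{m}_{\loc}(D\cup\iG^{\circ},E)$ by $\lemref{l.HDA.t.weakly}$, and the $A^{\ast}$-counterpart of that lemma for $Av$ (which may require the compatibility complex of $A$ in the overdetermined case) gives $Av\in H^{m}_{\loc}(D\cup(\partial D\setminus\iG),F)$. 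Splitting $\partial D_{-\varepsilon}$ into a part over $\iG^{\circ}$, a part over $\partial D\setminus\iG$, and a shrinking neighbourhood of $\iG\setminus\iG^{\circ}$, the first two contributions vanish in the limit because the corresponding trace factor is a genuine Sobolev trace tending to $0$, and the third is made negligible by a local energy estimate near the edge. Making this last estimate precise (or, alternatively, imposing mild regularity of $\iG$ so that $\iG\setminus\iG^{\circ}$ is negligible for the relevant trace spaces) is the only substantial technical point.
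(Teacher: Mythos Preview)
Your setup and the inclusion $\cD_T\subseteq\mathcal W$ are correct and coincide with the paper's argument (its Lemma~\ref{l.HD} is precisely your observation). The divergence is at the step you yourself flag, and the obstacle there is genuine rather than merely technical.

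The pairing $\langle t(v),n(Av)\rangle_{\partial D}$ involves two distributions that both lie only in negative-order Sobolev spaces on $\partial D$; such a pairing is not defined a~priori, and rewriting it as the limit along $\partial D_{-\varepsilon}$ gains nothing, because that limit is by construction exactly the quantity $D(v,v)$ you are trying to show is zero. The local regularity from Lemma~\ref{l.HDA.t.weakly} and its $A^{\ast}$-analogue improves matters only strictly away from the edge $\iG\setminus\iG^{\circ}$; near the edge you would need weighted trace theorems or an edge calculus that the paper never develops. Since the theorem is stated for an \emph{arbitrary} closed $\iG\subset\partial D$, your fallback of imposing ``mild regularity of $\iG$'' would genuinely weaken the result.

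The paper avoids the edge altogether by a different manoeuvre. Having fixed $v\in\mathcal W$ orthogonal to $\mathcal S$, it restricts to test sections $\varphi\in C^{\infty}(\overline D,E)$ satisfying \emph{both} $t(\varphi)=0$ on $\iG$ and $n(A\varphi)=0$ on $\partial D\setminus\iG^{\circ}$. For such $\varphi$ the defining relation $t(v)=0$ on $\iG$ (namely (\ref{eq.def.t}) with $g=A\varphi$) legitimises the integration by parts $(Av,A\varphi)_{L^2}=(v,A^{\ast}A\varphi)_{L^2}$ \emph{with no boundary term at all}, so $D(v,\varphi)=0$ collapses to $(v,(A^{\ast}A+I)\varphi)_{L^2}=0$. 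The conclusion is then purely operator-theoretic: the range of $A^{\ast}A+I$ on this class of $\varphi$'s is dense in $L^{2}(D,E)$, because the unbounded operator $T^{\ast}T+I$ on $L^{2}(D,E)$ is positive and hence surjective --- a fact established independently in \S\ref{s.Ap} by Riesz representation on $\cD_T$, with no circularity. Hence $v=0$.

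In short, the idea you are missing is to load the second boundary condition onto the \emph{test} function rather than onto $v$, so that (\ref{eq.def.t}) kills the boundary contribution outright and the orthogonality becomes an $L^{2}$-orthogonality against a dense range. This replaces your edge analysis by a one-line density argument.
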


\begin{proof}
Write $H$ for the subspace of $\cD_{A}$ consisting of all $u \in \cD_{A}$
satisfying $t (u) = 0$ on $\iG$.
It is easy to see that $H$ is a closed subspace of $\cD_{A}$.
Lemma \ref{l.HD} states that $\cD_{T}$ is a subspace of $H$.
Since $\cD_{T}$ is complete by the very definition, we shall have established
the theorem if we prove that the orthogonal complement $\cD_{T}^{\perp}$ of
$\cD_{T}$ in $H$ is zero.

To this end, pick a section $u \in H$ satisfying
   $D (u, v) = 0$
for all $v \in C^{\infty} (\overline{D},E)$, such that $t (v) = 0$ on $\iG$.
If moreover $v$ fulfills $n (Av) = 0$ on $\partial D \setminus \iG^{\circ}$
then we readily get
\begin{equation}
\label{eq.ort.Ct}
(u, (A^{\ast} A + I) v) _{L^2 (D,E)} = 0,
\end{equation}
which is due to (\ref{eq.def.t}).

We now observe that every $w \in C^{\infty} (\overline{D},E)$ can be
approximated in the $L^{2} (D,E)\,$-norm by sections of the form
$(A^{\ast} A + 1) v$, where $v \in C^{\infty} (\overline{D},E)$ satisfies
   $t (v) = 0$ on $\iG$
and
   $n (Av) = 0$ on $\partial D \setminus \iG^{\circ}$.
This latter is a consequence of the fact that the unbounded operator
$T^{\ast} T + 1$ in $L^{2} (D,E)$ with domain $\cD_{T^{\ast} T}$ is positive,
and so invertible,
   see \S~\ref{s.Ap} below.
We thus deduce from (\ref{eq.ort.Ct}) that $u = 0$.
It follows that $\cD_{T}^{\perp} = \{ 0 \}$, as desired.
\end{proof}

\section{The Cauchy problem}
\label{s.TCp}
\setcounter{equation}{0}

A rough formulation of the Cauchy problem for the operator $A$ in the domain
$D$ reads as follows:
Given any sections
   $f$ of $F$ over $D$ and
   $u_{0}$ of $\oplus_{j=0}^{m-1} F_{j}$ over $\iG$,
find a section $u$ of $E$ over $D$, such that
   $Au = f$ in $D$ and
   $t (u)$ has suitable limit values on $\iG$ coinciding with $u_0$.

Note that some regularity of $u$ up to $\iG$ is needed for $t (u)$ to possess
limit values on $\iG$.
Moreover, we are going to use Hilbert space methods for the study of the Cauchy
problem.
Hence the space $\cD_{A}$ seems to be a natural choice for posing the problem.

What is still lacking is a proper function space $B (\iG)$ for the Cauchy data
$u_{0}$ on $\iG$.
It is not difficult to introduce such a space in the case where $\iG$ is the
entire boundary, namely
$$
B (\partial D) = \cD_{A} / \i{H}{}^{m} (D,E).
$$
By Lemma \ref{l.HDA}, this quotient space can be specified within
   $\oplus_{j=0}^{m-1} H^{-m_{j}-1/2} (\partial D, F_j)$
under $t$, although the norm of the former is essentially stronger than the
norm of the latter.

Theorem \ref{t.HD} suggests us to set
$$
B (\iG) = \frac{\cD_{A}}{\cD_{T}}
$$
in general.
Using the approach of \cite[Ch.~1]{Tark36} one can specify $B (\iG)$ within
   $\oplus_{j=0}^{m-1} H^{-m_{j}-1/2} (\iG, F_j)$
under $t$.
Of course, it is difficult to explicitly describe the elements of $B (\iG)$.
However, for applications it suffices to know that there is a natural
embedding
$$
   \oplus_{j=0}^{m-1} H^{m-m_{j}-1/2} (\iG, F_j)
 \hookrightarrow
   B (\iG).
$$

Using the spaces $B (\iG)$ allows one to reduce the Cauchy problem with
non-zero Cauchy data on $\iG$ to the Cauchy problem with homogeneous boundary
data.
Indeed, given $f \in L^{2} (D,F)$ and
              $u_{0} \in B (\iG)$,
we look for a section $u \in \cD_{A}$ satisfying $Au = f$ in $D$ and
                                                 $t (u) = u_{0}$ on $\iG$.
By the very definition of the space $B (\iG)$ there is a $U_{0} \in \cD_{A}$
with the property that $\tau (U_{0}) = u_{0}$ on $\iG$.
This latter equality just amounts to saying that $U_{0} - u_{0} \in \cD_{T}$.
Set $u = U_{0} + U$, then $u \in \cD_{A}$ is equivalent to $U \in \cD_{A}$.
Furthermore, $t (u) = u_{0}$ on $\iG$ is equivalent to $t (U) = 0$.
Since $AU = f - A U_{0}$ and
      $A U_{0} \in L^{2} (D,F)$,
substituting $u = U_{0} + U$ into the problem leads to the Cauchy problem with
$u_{0} = 0$.

\begin{prob}
\label{pr.Cauchy.HD}
Let $f \in L^2 (D,F)$ be an arbitrary section.
Find $u \in \cD_{T}$ such that $Au = f$ in $D$.
\end{prob}

If $\iG^{\circ} \ne \emptyset$ and the Unique Continuation Property $(U)_{s}$
holds for $A$ then Problem \ref{pr.Cauchy.HD} has at most one solution,
   cf. Theorem 10.3.5 of \cite{Tark36}.
Otherwise we can not guarantee that the null-space
   $\Sol_{A} (D) \cap \cD_{T}$
of this problem is trivial.
It is well known that the Cauchy problem for elliptic equations is ill-posed
in general.
Moreover, if $A$ is overdetermined then additional necessary conditions arise
for the problem to be solvable.
In fact, these conditions reflect the fact that the image of $\cD_{T}$ by $A$
may be not dense in $L^2 (D,F)$.

Let us formulate this more precisely.
To this end, we invoke as usual the boundary conditions which are adjoint for
$t$ with respect to the Green formula in $D$.
Similarly to (\ref{eq.def.t}), for $g \in L^{2} (D,F)$ with
                                   $A^{\ast} g \in L^2 (D,E)$,
we say that $n (g) = 0$ on the set $\partial D \setminus \iG^{\circ}$
   if
\begin{equation}
\label{eq.def.n}
\int _{D} \left( (Au, g)_{x} - (u, A^{\ast} g)_{x} \right) dx = 0
\end{equation}
for all sections $u \in C^{\infty} (\overline{D}, E)$ satisfying $t (u) = 0$
on $\iG$.

Recall that
   $A^{1} \in \Diff^{m_{1}} (X; F, E^{2})$
stands for a compatibility operator for $A$ over $X$, i.e.,
   $A^{1}$ is in a sense ``smallest'' differential operator with the property
   that $A^{1} A \equiv 0$ on $X$.
We make use of the Green formula for $A^{1}$ in the same way as above to
introduce the relations
   ``$n (v) = 0$ on $\partial D \setminus \iG^{\circ}$'',
for all sections $v \in L^{2} (D,E^{2})$ with
                 $A^{1}{}^{\ast} v \in L^2 (D,F)$,
and
   ``$t (f) = 0$ on $\iG$'',
for all sections $f \in L^{2} (D,F)$ with
                 $A^{1} f \in L^2 (D,E^{2})$.

The boundary equations
   $n (v) = 0$ for sections of $E^{2}$ and
   $t (f) = 0$ for sections of $F$
are no longer induced by any Dirichlet system on $\partial D$ as those at
steps $1$ and
      $0$,
respectively.

\begin{lem}
\label{l.Necessary}
Assume that $f \in L^2 (D,F)$ belongs to the closure of $A\, \cD_{T}$ in
                                                        $L^2 (D,F)$.
Then
\begin{enumerate}
   \item [ $1)$ ]
$A^1 f = 0$ in $D$ in the sense of distributions;
   \item [ $2)$ ]
$t (f) = 0$ on $\iG$;
   \item [ $3)$ ]
$(f,g)_{L^2 (D,F)} = 0$ for all $g \in L^{2} (D,F)$ satisfying
   $A^{\ast} g = 0$ in $D$ and
   $n (g) = 0$ on $\partial D \setminus \iG^{\circ}$.
\end{enumerate}
\end{lem}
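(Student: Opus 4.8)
The plan is to prove each of the three necessary conditions by first establishing them for smooth sections $u\in C^{\infty}(\overline D,E)$ with $t(u)=0$ on $\iG$, and then passing to the closure, using the continuity of the relevant operations with respect to the graph norm $D(\cdot)$ together with the density of such smooth sections in $\cD_{T}$. Since $f$ lies in the closure of $A\,\cD_{T}$, we may fix a sequence $u_{k}\in\cD_{T}$ with $Au_{k}\to f$ in $L^{2}(D,F)$, and by definition of $\cD_{T}$ each $u_{k}$ may in turn be taken smooth on $\overline D$ with $t(u_{k})=0$ on $\iG$; so it suffices to treat $f=Au$ for such $u$ and check that the three properties are stable under $L^{2}(D,F)$-limits.

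For property $1)$, the identity $A^{1}A\equiv 0$ on $X$ gives $A^{1}(Au_{k})=0$ in $D$ for every $k$; since $A^{1}$ is continuous from $L^{2}(D,F)$ to distributions on $D$, we get $A^{1}f=0$ in $D$ in the sense of distributions. For property $2)$, recall that ``$t(f)=0$ on $\iG$'' for $f\in L^{2}(D,F)$ with $A^{1}f\in L^{2}(D,E^{2})$ means, by the Green formula for $A^{1}$, that $\int_{D}\bigl((A^{1}f,v)_{x}-(f,A^{1\ast}v)_{x}\bigr)dx=0$ for all $v\in C^{\infty}(\overline D,E^{2})$ with $n(v)=0$ on $\partial D\setminus\iG^{\circ}$. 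Here $A^{1}f=0$ by $1)$, so the condition reduces to $\int_{D}(f,A^{1\ast}v)_{x}dx=0$. Writing $f=\lim Au_{k}$ and using $A^{1}A\equiv0$ together with the Green formula for $A$ and the fact that $t(u_{k})=0$ on $\iG$ while $n(A^{1\ast}v)=0$ on $\partial D\setminus\iG^{\circ}$ (this is exactly how the adjoint boundary condition for $A^{1}$ interacts with $t$; one uses $A^{1\ast}v$ as an admissible test section $g$ in \eqref{eq.def.t}), each term $\int_{D}(Au_{k},A^{1\ast}v)_{x}dx$ vanishes, hence so does the limit. For property $3)$, if $g\in L^{2}(D,F)$ satisfies $A^{\ast}g=0$ in $D$ and $n(g)=0$ on $\partial D\setminus\iG^{\circ}$, then $g$ is an admissible test section in \eqref{eq.def.t}, so $\int_{D}\bigl((Au_{k},g)_{x}-(u_{k},A^{\ast}g)_{x}\bigr)dx=0$; since $A^{\ast}g=0$ this says $(Au_{k},g)_{L^{2}(D,F)}=0$, and letting $k\to\infty$ gives $(f,g)_{L^{2}(D,F)}=0$.

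The main obstacle I expect is property $2)$: one has to be careful that the test section $A^{1\ast}v$ really qualifies as an admissible $g$ in the defining relation \eqref{eq.def.t} for ``$t(u)=0$ on $\iG$'', i.e. that $n(A^{1\ast}v)=0$ on $\partial D\setminus\iG^{\circ}$ whenever $n(v)=0$ there for the $A^{1}$-Green formula. This is a compatibility statement between the two Green formulas (for $A$ and for $A^{1}$) and the boundary operators $t,n$ built from the fixed Dirichlet system; it follows from the way the adjoint boundary conditions at successive steps of the compatibility complex are chosen, but it requires unwinding the definitions of the boundary relations ``$n(v)=0$'' and ``$t(f)=0$'' given just before the lemma. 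Once this bookkeeping is in place, all three parts are routine limit arguments, and the proof is complete by invoking the density of smooth sections with $t(\cdot)=0$ on $\iG$ in $\cD_{T}$ and the $L^{2}(D,F)$-continuity used throughout.
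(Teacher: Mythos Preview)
Your argument is correct and mirrors the paper's proof: approximate $f$ by $Au_k$ with smooth $u_k\in\cD_T$, use $A^1A\equiv0$ for 1), the implication $n(v)=0\Rightarrow n(A^{1\ast}v)=0$ on $\partial D\setminus\iG^\circ$ (which the paper justifies via $A^\ast A^{1\ast}=0$ and Stokes' formula, exactly the compatibility you flagged) for 2), and the weak boundary relation for 3). One small correction: in part 3) the section $g$ is only in $L^2(D,F)$, so it is not literally an admissible test section in \eqref{eq.def.t}; rather, the vanishing $\int_D\bigl((Au_k,g)_x-(u_k,A^\ast g)_x\bigr)\,dx=0$ is precisely the \emph{definition} \eqref{eq.def.n} of ``$n(g)=0$ on $\partial D\setminus\iG^\circ$'' applied with the smooth $u=u_k$.
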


\begin{proof}
\
\\[-.3cm]

$1)$
Let $f$ belong to the closure of $A\, \cD_{T}$ in $L^2 (D,F)$.
Then there is a sequence $\{ u_k \}_{k \in \N}$ in $\cD_{T}$, such that
$\{ A u_k \}_{k \in \N} $ converges to $f$ in $L^2 (D,F)$.
Without loss of generality we may assume that each $u_{k}$ is of class
$C^{\infty} (\overline{D}, E)$, for such functions are dense in $\cD_{T}$.
As $A^1 A \equiv 0$, we get
\begin{eqnarray*}
   (f, A^1{}^{\ast} v)_{L^2 (D,F)}
 & = &
   \lim_{k \to \infty}
   (A u_k, A^1{}^{\ast} v)_{L^2 (D,F)}
                                                 \\
 & = &
   \lim_{k \to \infty}
   (u_k, (A^1 A)^{\ast} v)_{L^2 (D,E)}
                                                 \\
 & = &
   \lim_{k \to \infty}
   0
                                                 \\
 & = &
   0
\end{eqnarray*}
for all
   $v \in C^{\infty} (\overline{D}, E^2)$
satisfying $n (A^1{}^{\ast} v) = 0$ on $\partial D \setminus \iG^{\circ}$.
In particular, this equality is fulfilled for all sections
   $v \in C^{\infty} (\overline{D}, E^2)$
of compact supports in $D$, which implies $A^1 f = 0$ in $D$.

$2)$
Suppose $v \in C^{\infty} (\overline{D}, E^2)$ is any section satisfying
   $n (v) = 0$ on $\partial D \setminus \iG^{\circ}$.
Then
   $n (A^1{}^{\ast} v) = 0$ holds on $\partial D \setminus \iG^{\circ}$,
too, which is a consequence of $A^{\ast} A^1{}^{\ast} = 0$ and
                               Stokes' formula.
By $1)$, we get
\begin{eqnarray*}
   -\, (f, A^1{}^{\ast} v)_{L^2 (D,F)}
 & = &
   \int_{D}
   \left( (A^1 f, v)_x - (f, A^1{}^{\ast} v)_x \right)
   dx
                                                 \\
 & = &
   0,
\end{eqnarray*}
the first equality being a consequence of the fact that $A^{1} f = 0$ in $D$.
Hence it follows that $t (f) = 0$ on $\iG$.

$3)$
Finally,
\begin{eqnarray*}
   (f,g)_{L^2 (D,F)}
 & = &
   \lim_{k \to \infty}
   (A u_k, g)_{L^2 (D,F)}
                                                 \\
 & = &
   \lim_{k \to \infty}
   \int_{D} \left( (A u_k, g)_x - (u_k, A^{\ast} g)_x \right) dx
                                                 \\
 & = &
   \lim_{k \to \infty}
   0
                                                 \\
 & = &
   0
\end{eqnarray*}
provided that $g \in L^{2} (D,F)$ satisfies
   $A^{\ast} g = 0$ in $D$ and
   $n (g) = 0$ on $\partial D \setminus \iG^{\circ}$.
This proves $3)$.
\end{proof}

The condition $3)$ is not only necessary but also sufficient in order that $f$
would belong to the closure of $A\, \cD_{T}$ in $L^2 (D,F)$.

\begin{lem}
\label{l.Necessary.back.1}
If $f$ satisfies the condition $3)$ of Lemma \ref{l.Necessary} then $f$ lies
in the closure of $A\, \cD_{T}$ in $L^2 (D,F)$.
\end{lem}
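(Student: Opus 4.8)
The plan is to prove the contrapositive in the spirit of the abstract functional-analytic observation of the Introduction: $f$ fails to lie in the closure of $A\,\cD_{T}$ in $L^{2}(D,F)$ precisely when $f$ has a nonzero component orthogonal to that closed subspace, and I want to identify that orthogonal complement with the space of $g\in L^{2}(D,F)$ satisfying $A^{\ast}g=0$ in $D$ and $n(g)=0$ on $\partial D\setminus\iG^{\circ}$. So suppose $f$ satisfies condition $3)$ but, for contradiction, $f\notin\overline{A\,\cD_{T}}$. Let $f_{0}$ denote the $L^{2}(D,F)\,$-orthogonal projection of $f$ onto $\overline{A\,\cD_{T}}$ and put $g=f-f_{0}\neq 0$. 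Then $g\perp A\,\cD_{T}$, i.e. $(Au,g)_{L^{2}(D,F)}=0$ for every $u\in\cD_{T}$; since $C^{\infty}(\overline D,E)$ sections with $t(u)=0$ on $\iG$ are dense in $\cD_{T}$, this holds for all such smooth $u$.

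The core of the argument is to extract from the single orthogonality relation $(Au,g)_{L^{2}(D,F)}=0$ the two facts $A^{\ast}g=0$ in $D$ and $n(g)=0$ on $\partial D\setminus\iG^{\circ}$. First, testing against $u\in C^{\infty}_{\comp}(D,E)$ gives $(u,A^{\ast}g)=0$ for all such $u$, whence $A^{\ast}g=0$ in $D$ in the sense of distributions; in particular $A^{\ast}g\in L^{2}(D,E)$, so the relation $n(g)=0$ on $\partial D\setminus\iG^{\circ}$ is meaningful in the sense of \eqref{eq.def.n}. Now for arbitrary $u\in C^{\infty}(\overline D,E)$ with $t(u)=0$ on $\iG$ we have, using $A^{\ast}g=0$,
\[
\int_{D}\bigl((Au,g)_{x}-(u,A^{\ast}g)_{x}\bigr)\,dx=(Au,g)_{L^{2}(D,F)}=0,
\]
which is exactly the defining condition \eqref{eq.def.n} for $n(g)=0$ on $\partial D\setminus\iG^{\circ}$. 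Thus $g$ is an admissible test section in condition $3)$, and therefore $(f,g)_{L^{2}(D,F)}=0$. On the other hand $g=f-f_{0}$ with $f_{0}\in\overline{A\,\cD_{T}}$ and $g\perp\overline{A\,\cD_{T}}$, so $(f,g)_{L^{2}(D,F)}=(g,g)_{L^{2}(D,F)}=\|g\|_{L^{2}(D,F)}^{2}$. Hence $g=0$, contradicting $g\neq 0$, and we conclude $f\in\overline{A\,\cD_{T}}$.

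The step I expect to be the main (though modest) obstacle is the careful handling of the boundary relation $n(g)=0$ on $\partial D\setminus\iG^{\circ}$: one must make sure that the definition \eqref{eq.def.n} is being applied with the correct class of test sections (smooth $u$ on $\overline D$ with $t(u)=0$ on $\iG$, not merely compactly supported ones), and that the passage from the orthogonality relation on $\cD_{T}$ to a relation tested against such smooth $u$ is legitimate — this is where the density of $C^{\infty}(\overline D,E)\cap\{t(u)=0\text{ on }\iG\}$ in $\cD_{T}$, built into the very definition of $\cD_{T}$, is used. Everything else is the standard projection-and-orthogonal-complement dichotomy in the Hilbert space $L^{2}(D,F)$.
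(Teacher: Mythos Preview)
Your argument is correct and is essentially the paper's own proof: both identify the orthogonal complement of $A\,\cD_{T}$ in $L^{2}(D,F)$ with the space $V$ of $g$ satisfying $A^{\ast}g=0$ in $D$ and $n(g)=0$ on $\partial D\setminus\iG^{\circ}$, via the two-step test (first $u\in C^{\infty}_{\comp}(D,E)$, then smooth $u$ with $t(u)=0$ on $\iG$). The only cosmetic difference is that the paper establishes $V=(A\,\cD_{T})^{\perp}$ as an equality of subspaces and then concludes, whereas you package the same inclusion $(A\,\cD_{T})^{\perp}\subset V$ inside a contradiction argument using the orthogonal projection of $f$.
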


\begin{proof}
Write $V$ for the space of all $g \in L^{2} (D,F)$ satisfying
   $A^{\ast} g = 0$ in $D$ and
   $n (g) = 0$ on $\partial D \setminus \iG^{\circ}$.
We shall have established the lemma if we show that $V$ coincides with the
orthogonal complement of the image $A\, \cD_{T}$ in $L^2 (D,F)$.
By definition,
   $g \in (A\, \cD_{T})^{\perp}$
if
\begin{equation}
\label{eq.ort.IMA}
(g, Au)_{L^2 (D,F)} = 0
\end{equation}
for all $u \in \cD_{T}$.
Since $\cD_{T}$ contains all smooth functions of compact support in $D$,
we conclude that $(A\, \cD_{T})^\bot \subset \Sol_{A^{\ast}} (D)$.
Then equality (\ref{eq.ort.IMA}) imlplies that $(A\, \cD_{T})^\bot \subset V$
because
$$
   (g, Au)_{L^2 (D,F)}
 = -
   \int_{D}
   \left( (A^* g, u)_x - (g, Au)_x \right)
   dx
$$
for all $g \in V$.

On the other hand, the inclusion $V \subset (A\, \cD_{T})^\bot$ follows from
(\ref{eq.def.t}) because each $u \in \cD_{T}$ can be approximated in the norm
$D (\cdot)$ by sections $u_{k} \in C^{\infty} (\overline{D}, E)$ satisfying
$t (u_{k}) = 0$ on $\iG$.
\end{proof}

Denote by $\cH^{1} (D,\iG)$ the space of all $g \in L^{2} (D,F)$ satisfying
   $A^{\ast} g = A^{1} g = 0$ in $D$
and
   $n (g) = 0$ on $\partial D \setminus \iG^{\circ}$.
Following \cite{SchuShlaTark2} we call $\cH^{1} (D,\iG)$ the harmonic space in
the Cauchy problem with data on $\iG$.
This is an analogue of the well-known harmonic spaces in the Neumann problem
for the Laplace operator,
   cf. \cite[4.1]{Tark35}.

\begin{lem}
\label{l.Necessary.back.2}
When combined with
\begin{enumerate}
   \item [ $4)$ ]
$(f,g)_{L^2 (D,F)} = 0$ for all $g \in \cH^{1} (D,\iG)$,
\end{enumerate}
the condition $1)$ of Lemma \ref{l.Necessary} implies that $f$ belongs to the
closure of $A\, \cD_{T}$ in $L^{2} (D,F)$.
\end{lem}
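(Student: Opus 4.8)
The plan is to reduce the assertion to Lemma~\ref{l.Necessary.back.1}. That lemma already shows condition $3)$ of Lemma~\ref{l.Necessary} to be sufficient for $f$ to lie in the closure of $A\, \cD_T$ in $L^2 (D,F)$, so it is enough to derive $3)$ from $1)$ together with $4)$. Writing, as in the proof of Lemma~\ref{l.Necessary.back.1}, $V$ for the space of all $g \in L^2 (D,F)$ with $A^\ast g = 0$ in $D$ and $n(g) = 0$ on $\partial D \setminus \iG^\circ$, what has to be shown is that $f$ is orthogonal to $V$ in $L^2 (D,F)$.

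The decisive step is an orthogonal decomposition of $V$. Let $W$ be the closure in $L^2 (D,F)$ of the set $\{ A^1{}^\ast v : v \in C^\infty_\comp (D, E^2) \}$. Since $A^1 A \equiv 0$ we have $A^\ast A^1{}^\ast = (A^1 A)^\ast = 0$, and every section $A^1{}^\ast v$ with $v$ of compact support in $D$ is itself a smooth section of compact support in $D$; hence $A^1{}^\ast v \in V$, and as $V$ is closed we obtain $W \subseteq V$. It follows that $V = W \oplus (V \cap W^\perp)$ orthogonally, because any $g \in V$ splits as its orthogonal projection onto $W$ (which lies in $W \subseteq V$) plus a remainder lying in $W^\perp$ which, being a difference of elements of $V$, lies in $V$ as well.

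Next I would identify the two summands. By the very definition of the action of $A^1$ on distributions, $W^\perp = \{ h \in L^2 (D,F) : A^1 h = 0 \text{ in } D \}$, so that $V \cap W^\perp$ consists precisely of those $g \in L^2 (D,F)$ with $A^\ast g = A^1 g = 0$ in $D$ and $n(g) = 0$ on $\partial D \setminus \iG^\circ$; that is, $V \cap W^\perp = \cH^1 (D, \iG)$. Now hypothesis $1)$ states exactly that $f \in W^\perp$, i.e.\ that $f \perp W$, while hypothesis $4)$ states that $f \perp \cH^1 (D, \iG)$. Hence $f$ is orthogonal to both summands of $V$, and therefore to all of $V$; this is condition $3)$, and an appeal to Lemma~\ref{l.Necessary.back.1} completes the proof.

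I do not expect a serious analytic obstacle here: the argument rests entirely on the algebraic identity $A^1 A \equiv 0$, which forces $W \subseteq V$, and on the elementary splitting of a closed subspace, while the two facts of a more analytic flavour that enter, namely $W^\perp = \ker A^1$ in the distributional sense and the bare definition of $\cH^1 (D, \iG)$, are routine. The one point worth stressing is the choice of $W$: taking the range of $A^1{}^\ast$ on \emph{compactly supported} sections, rather than on its full natural domain, is precisely what makes $V \cap W^\perp$ coincide on the nose with the harmonic space $\cH^1 (D, \iG)$ and lets hypotheses $1)$ and $4)$ combine without any boundary term to be controlled.
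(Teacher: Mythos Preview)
Your argument is correct, but the paper proceeds differently. Rather than decomposing the obstruction space $V$, the paper decomposes $f$ itself: writing $f = f_1 + f_2$ with $f_1$ in the closure of $A\,\cD_T$ and $f_2 \in V = (A\,\cD_T)^\perp$, it observes that $A^1 f_1 = 0$ by Lemma~\ref{l.Necessary}, hence $A^1 f_2 = A^1 f - A^1 f_1 = 0$ by hypothesis~$1)$, which places $f_2$ in $\cH^1(D,\iG)$; then hypothesis~$4)$ gives $0 = (f,f_2) = (f_2,f_2)$, so $f_2 = 0$. This is shorter and avoids introducing the auxiliary space $W$ altogether. Your route, by contrast, yields the structural identity $V = \overline{A^1{}^\ast\, C^\infty_\comp(D,E^2)} \oplus \cH^1(D,\iG)$, which is of independent interest and makes transparent why exactly the pair of hypotheses $1)$ and $4)$ suffices: they are orthogonality conditions against the two summands of $V$. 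Both proofs ultimately rest on the same algebraic fact $A^1 A = 0$ and the identification of $(A\,\cD_T)^\perp$ with $V$ from Lemma~\ref{l.Necessary.back.1}.
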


\begin{proof}
Let the conditions $1)$ and $4)$ are fulfilled for $f \in L^2 (D,F)$.
The proof of Lemma \ref{l.Necessary.back.1} shows that
\begin{equation}
\label{eq.decomp}
f = f_1 + f_2,
\end{equation}
where
   $f_1$ belongs to the closure of $A\, \cD_{T}$ in $L^{2} (D,F)$
and
   $f_2 \in V$.
As $A^1 f = 0$ in $D$, we deduce by Lemma \ref{l.Necessary} that $A^1 f_2 = 0$
in $D$.
This means $f_2 \in \cH^{1} (D,\iG)$.
Finally,  $4)$ implies
\begin{eqnarray*}
   0
 & = &
   (f, f_2)_{L^2 (D,F)}
                                                 \\
 & = &
   (f_2, f_2)_{L^2 (D,F)}
\end{eqnarray*}
whence $f_2 = 0$,
   and so $f$ belongs to the closure of $A\, \cD_{T}$ in $L^2 (D,F)$.
\end{proof}

Obviously, if $f$ belongs to the closure of $A\, \cD_{T}$ in $L^{2} (D,F)$
then it satisfies $4)$ by Lemma \ref{l.Necessary}, $3)$.
It follows that the condition $3)$ of Lemma \ref{l.Necessary} is equivalent to
$1) + 4)$.

\begin{lem}
\label{l.Necessary.back.3}
When combined with
\begin{enumerate}
   \item [ $5)$ ]
$(f,g)_{L^2 (D,F)} = 0$
   for all $g \in \cH^{1} (D,\iG)$ satisfying $t (g) = 0$ on $\iG$,
\end{enumerate}
the conditions $1)$ and $2)$ of Lemma \ref{l.Necessary} imply that $f$ belongs
to the closure of $A\, \cD_{T}$ in $L^{2} (D,F)$.
\end{lem}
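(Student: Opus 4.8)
The plan is to run the argument of Lemma~\ref{l.Necessary.back.2} almost verbatim, using hypothesis $2)$ to pin down an extra property of the component of $f$ orthogonal to $A\,\cD_{T}$, so that the weaker orthogonality $5)$ suffices in place of $4)$. I would begin by recalling from the proof of Lemma~\ref{l.Necessary.back.1} that the space $V$ of all $g \in L^{2}(D,F)$ with $A^{\ast}g = 0$ in $D$ and $n(g) = 0$ on $\partial D \setminus \iG^{\circ}$ is exactly the orthogonal complement of $A\,\cD_{T}$ in $L^{2}(D,F)$. This gives an orthogonal decomposition $f = f_{1} + f_{2}$ with $f_{1}$ in the closure of $A\,\cD_{T}$ and $f_{2} \in V$.

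Next I would identify $f_{2}$ as an element of $\cH^{1}(D,\iG)$ with vanishing trace on $\iG$. Applying Lemma~\ref{l.Necessary}, $1)$, to $f_{1}$ gives $A^{1}f_{1} = 0$ in $D$, and together with hypothesis $1)$ this forces $A^{1}f_{2} = 0$ in $D$, so $f_{2} \in \cH^{1}(D,\iG)$, exactly as in Lemma~\ref{l.Necessary.back.2}. Then I would apply Lemma~\ref{l.Necessary}, $2)$, to $f_{1}$ to obtain $t(f_{1}) = 0$ on $\iG$, and combine it with hypothesis $2)$ to conclude $t(f_{2}) = 0$ on $\iG$. Finally, hypothesis $5)$ applies to $g = f_{2}$, giving $(f,f_{2})_{L^{2}(D,F)} = 0$; the orthogonality of $f_{1}$ and $f_{2}$ then yields $(f_{2},f_{2})_{L^{2}(D,F)} = 0$, so $f_{2} = 0$ and $f = f_{1}$ lies in the closure of $A\,\cD_{T}$, as claimed.

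The one step that needs care---and the only genuine obstacle---is the passage from $t(f) = 0$ and $t(f_{1}) = 0$ on $\iG$ to $t(f_{2}) = 0$ on $\iG$, since at step $1$ of the compatibility complex the relation ``$t(\cdot) = 0$ on $\iG$'' is defined by an integral identity of the type (\ref{eq.def.t}) rather than by a trace operator. Here I would use that $A^{1}$ annihilates each of $f$, $f_{1}$, $f_{2}$, so that for all three the defining identity collapses to the vanishing of the functional $v \mapsto \int_{D}(\cdot\,,A^{1}{}^{\ast}v)_{x}\,dx$ taken over $v \in C^{\infty}(\overline{D},E^{2})$ with $n(v) = 0$ on $\partial D \setminus \iG^{\circ}$; since $f_{1}$ and $f_{2}$ are in the right class ($A^{1}f_{1} = A^{1}f_{2} = 0$) and this functional is additive in its argument, subtracting the identities for $f$ and $f_{1}$ gives the one for $f_{2}$. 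Everything else is routine bookkeeping with the orthogonal decomposition and the earlier lemmas.
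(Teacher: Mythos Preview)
Your proof is correct and follows essentially the same route as the paper's: decompose $f = f_{1} + f_{2}$ via (\ref{eq.decomp}), use Lemma~\ref{l.Necessary} on $f_{1}$ together with hypotheses $1)$ and $2)$ to show $f_{2} \in \cH^{1}(D,\iG)$ with $t(f_{2}) = 0$ on $\iG$, then invoke $5)$ with $g = f_{2}$ and the orthogonality of the decomposition to kill $f_{2}$. Your extra care in justifying the additivity of the weak relation ``$t(\cdot)=0$ on $\iG$'' at step~$1$ is more explicit than the paper's own ``we readily conclude,'' but the argument is the same.
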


\begin{proof}
Let the conditions $1)$, $2)$ and $5)$ hold true for $f \in L^2 (D,F)$.
Taking into account Lemma \ref{l.Necessary} and
                    decomposition (\ref{eq.decomp})
we readily conclude that $A^1 f_2 = 0$ in $D$ and
                         $t (f_2) = 0$ on $\iG$.
Finally, $5)$ implies
\begin{eqnarray*}
   0
 & = &
   (f, f_2)_{L^2 (D,F)}
                                                 \\
 & = &
   (f_2, f_2)_{L^2 (D,F)}
\end{eqnarray*}
whence $f_2 = 0$.
Thus, $f = f_{1}$ belongs to the closure of $A\, \cD_{T}$ in $L^2 (D,F)$,
   as desired.
\end{proof}

\begin{rem}
\label{r.elliptic}
Of course, if $A$ is elliptic then $A^1 = 0$ and the conditions $1)$ and
                                                                $2)$
are always fulfilled.
As for the condition $3)$, one easily proves that each $g \in L^{2} (D,F)$
satisfying
   $A^{\ast} g = 0$ in $D$ and
   $n (g) = 0$ on $\partial D \setminus \iG^{\circ}$
vanishes identically in all of $D$, provided that
   $A^{\ast}$ is elliptic,
   $\iG \ne \partial D$
and
   $A^{\ast}$ possesses the Unique Continuation Property $(U)_{s}$ in a
   neighbourhood of $\overline{D}$
(see, for instance, \cite[Theorem 10.3.5]{Tark36}).
If $A$ is overdetermined elliptic then the domain $D$ should possess some
convexity property relative to $A$, in order that
   $\cH^{1} (D,\iG)$ or
   $\{ g \in \cH^{1} (D,\iG) :\, t (g) = 0\ \mbox{on}\ \iG \}$
might be trivial.
In the case $\iG = \emptyset$ we refer the reader to \cite[4.1.3]{Tark35} for
more details.
\end{rem}

We have thus described the closure of $A\, \cD_{T}$ in $L^2 (D,F)$.
It is a more difficult task to describe the image $A\, \cD_{T}$ itself.
The following lemma is the first step in this direction.

\begin{lem}
\label{l.generalized}
Let $f \in L^2 (D,F)$ belong to the closure of $A\, \cD_{T}$ in $L^2 (D,F)$.
Then a section $u \in \cD_{T}$ is a solution to Problem \ref{pr.Cauchy.HD} if
and only if
\begin{equation}
\label{eq.generalized}
(Au, Av)_{L^2 (D,F)} = (f, Av)_{L^2 (D,F)}
\end{equation}
for all $v \in \cD_{T}$.
\end{lem}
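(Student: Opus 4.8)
The plan is to recognize this as the standard equivalence between a ``strong'' solution and a ``generalized'' (variational) solution, where the hypothesis $f \in \overline{A\,\cD_{T}}$ is precisely what closes the gap.

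First I would dispose of the easy implication: if $u \in \cD_{T}$ satisfies $Au = f$ in $D$, then for every $v \in \cD_{T}$ one has $Av \in L^{2}(D,F)$ and hence $(Au, Av)_{L^{2}(D,F)} = (f, Av)_{L^{2}(D,F)}$ trivially, which is \eqref{eq.generalized}.

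For the converse, suppose $u \in \cD_{T}$ satisfies \eqref{eq.generalized} for all $v \in \cD_{T}$. Rewriting, $(Au - f, Av)_{L^{2}(D,F)} = 0$ for all $v \in \cD_{T}$, i.e. the section $Au - f$ is orthogonal in $L^{2}(D,F)$ to the linear subspace $A\,\cD_{T}$. By continuity of the scalar product, $Au - f$ is then orthogonal to the closure of $A\,\cD_{T}$ in $L^{2}(D,F)$ as well. On the other hand, $Au \in A\,\cD_{T}$ because $u \in \cD_{T}$, and $f$ lies in the closure of $A\,\cD_{T}$ by hypothesis; since that closure is a (closed) linear subspace, $Au - f$ belongs to it. An element of a subspace which is orthogonal to that same subspace must vanish, so $Au - f = 0$ in $L^{2}(D,F)$, that is, $Au = f$ in $D$, and $u$ solves Problem \ref{pr.Cauchy.HD}.

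There is essentially no obstacle here — the only subtlety worth flagging is that one genuinely needs $f \in \overline{A\,\cD_{T}}$ and not merely $f \in L^{2}(D,F)$: without it, $Au - f$ need not lie in $\overline{A\,\cD_{T}}$, and orthogonality to $A\,\cD_{T}$ alone would only force $Au - f$ into $(A\,\cD_{T})^{\perp}$, which is in general nonzero (it is the space $V$ of Lemma \ref{l.Necessary.back.1}). The variational equation \eqref{eq.generalized} is exactly the normal equation $T^{\ast}Tu = T^{\ast}f$ alluded to in the Introduction, and this lemma records that, on the subspace of admissible right-hand sides, it is equivalent to the original problem.
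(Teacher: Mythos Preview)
Your argument is correct and, in fact, more direct than the paper's. You observe that \eqref{eq.generalized} says $Au - f \perp A\,\cD_{T}$, hence $Au - f \perp \overline{A\,\cD_{T}}$, while the hypothesis together with $Au \in A\,\cD_{T}$ gives $Au - f \in \overline{A\,\cD_{T}}$; so $Au - f = 0$. The paper instead characterises the orthogonal complement explicitly: from \eqref{eq.generalized} with $v \in C^{\infty}_{\comp}(D,E)$ it extracts $A^{\ast}(Au - f) = 0$ in $D$, then checks that $n(Au - f) = 0$ on $\partial D \setminus \iG^{\circ}$ in the sense of \eqref{eq.def.n}, and finally invokes Lemma~\ref{l.Necessary},\,3) to conclude $(Au - f, Au - f)_{L^{2}(D,F)} = 0$. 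Your route is the abstract Hilbert-space version; the paper's detour buys the PDE interpretation of \eqref{eq.generalized} as the mixed problem \eqref{eq.mixed}, which is used immediately afterwards. Your closing remark about the necessity of the hypothesis and the identification of $(A\,\cD_{T})^{\perp}$ with $V$ is apt.
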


\begin{proof}
If Problem \ref{pr.Cauchy.HD} is solvable and $u$ is one of its solutions then
(\ref{eq.generalized}) is obviously satisfied.

Conversely, if (\ref{eq.generalized}) holds for an element $u \in \cD_{T}$
then $A^{\ast} (Au-f) = 0$ in $D$ because the space $\cD_{T}$ contains all
smooth functions of compact support in $D$.
It follows that
\begin{eqnarray*}
   \int _{D} \left( (A^{\ast} (Au-f), v)_x - (Au-f, Av)_x \right) dx
 & = &
   - (Au-f, Av)_{L^2 (D,F)}
                                                 \\
 & = &
   0
\end{eqnarray*}
for all $v \in C^{\infty} (\overline{D}, E)$ satisfying $t (v) = 0$ on $\iG$,
which is due to (\ref{eq.generalized}).
Hence $n (Au-f) = 0$ on $\partial D \setminus \iG^{\circ}$.
Finally, since both $Au$ and $f$  belong to the closure of $A\, \cD_{T}$ in
$L^{2} (D,F)$, Lemma \ref{l.Necessary}, $3)$ shows that
$$
(Au-f, Au-f)_{L^2 (D,F)} = 0,
$$
i.e., $Au = f$ in $D$.
\end{proof}

In conclusion of this section let us clarify the meaning of
(\ref{eq.generalized}).
Namely, this equality amounts to saying that a solution $u \in \cD_{T}$ of the
Cauchy problem $Au = f$ is actually a solution to the mixed problem
\begin{equation}
\label{eq.mixed}
\left\{ \begin{array}{rclcl}
        A^{\ast} A u & = & A^{\ast} f & \mbox{in} & D;
                                                 \\
               t (u) & = & 0          & \mbox{on} & \iG,
                                                 \\
              n (Au) & = & n (f)      & \mbox{on} & \partial D \setminus
                                                    \iG^{\circ}.
        \end{array}
\right.
\end{equation}
Indeed, the proof of Lemma \ref{l.generalized} shows that
   $A^{\ast} Au = A^{\ast} f$ in $D$ in the sense of distributions
and
   $n (Au) = n (f)$ in the sense that $n (Au-f) = 0$ on
   $\partial D \setminus \iG^{\circ}$.
In particular, if
   $n (f)$ is well defined on $\partial D \setminus \iG^{\circ}$
then also $n(Au)$ is well defined on $\partial D \setminus \iG^{\circ}$.

Of course, the mixed problem (\ref{eq.mixed}) considered in appropriate spaces
gives nothing but (\ref{eq.generalized}).

In the next sections we will systematically use the generalised setting
(\ref{eq.generalized}) of Problem \ref{pr.Cauchy.HD} in order to derive its
solvability conditions.

\section{A perturbation}
\label{s.Ap}
\setcounter{equation}{0}

Equation (\ref{eq.generalized}) surprisingly shows that
Problem \ref{pr.Cauchy.HD} may be well posed in many cases.
Namely, this is the case if the Hermitian form
   $(A \cdot, A \cdot)_{L^2 (D,F)}$
is actually a scalar product on $\cD_{T}$ inducing the same topology as the
original scalar product $D (\cdot, \cdot)$.
For example, not only the gradient operator  $\nabla$ in $\R^n$ meets this
latter condition but also many other overdetermined elliptic operators $A$
with finite-dimensional kernel $\Sol_{A} (D)$.
Of course, $(A \cdot, A \cdot)_{L^2 (D,F)}$ is always a scalar product on
$\cD_{T}$ if $\iG \ne \emptyset$ and $A$ possesses the property $(U)_{s}$.
However, the completion of $\cD_{T}$ with respect to
   $(A \cdot,A \cdot)_{L^2 (D,F)}$
may lead to a space with elements of arbitrary order of growth near
$\partial D$.

This observation suggests us to perturb the Hermitian form
   $(A \cdot, A \cdot)_{L^2 (D,F)}$
thus obtaining a ``good'' scalar product on $\cD_{T}$.
For this purpose let us introduce a family of Hermitian forms
$$
   (u,v)_{\varepsilon}
 = (Au,Av)_{L^2 (D,F)} + \varepsilon\, (u,v)_{L^2 (D,E)}
$$
on $\cD_{T}$, parametrised by $\varepsilon > 0$.
For each fixed $\varepsilon > 0$, the corresponding norm
   $\| u \|_{\varepsilon} = \sqrt{(u,u)_{\varepsilon}}$
is equivalent to the graph norm  $D (u)$ on $\cD_{T}$.
More precisely, we get
\begin{equation}
\label{eq.norms}
   \min \{ 1, \sqrt{\varepsilon} \}\, D (u)
 \leq
   \| u \|_{\varepsilon}
 \leq
   \max \{ 1, \sqrt{\varepsilon} \}\,
   D (u)
\end{equation}
for all $u \in \cD_{A}$.

Taking into account Lemma \ref{l.generalized} we now consider the following
perturbed Cauchy problem:

\begin{prob}
\label{pr.Cauchy.perturbed}
Given any $f \in L^{2} (D,F)$ and
          $h \in L^{2} (D,E)$,
find an element $u_{\varepsilon} \in \cD_{T}$ satisfying
\begin{equation}
\label{eq.generalized.perturbed}
   (A u_\varepsilon, Av)_{L^2 (D,F)}
 + \varepsilon\, (u_{\varepsilon}, v)_{L^2 (D,E)}
 = (f, Av)_{L^2 (D,F)}
 + \varepsilon\, (h, v)_{L^2(D,E}
\end{equation}
for all $v \in \cD_{T}$.
\end{prob}

Note that the equation (\ref{eq.generalized.perturbed}) leads to a perturbation
of mixed problem (\ref{eq.mixed}), more precisely,
\begin{equation}
\label{eq.mixed.perturbed}
\left\{ \begin{array}{rclcl}
          A^{\ast} A u_{\varepsilon} + \varepsilon\, u_{\varepsilon}
        & =
        & A^{\ast} f + \varepsilon\, h
        & \mbox{in} & D;
                                                 \\
          t (u_{\varepsilon})
        & =
        & 0
        & \mbox{on}
        & \iG,
                                                 \\
          n (A u_{\varepsilon})
        & =
        & n (f)
        & \mbox{on}
        & \partial D \setminus \iG^{\circ}.
        \end{array}
\right.
\end{equation}

Indeed, since the space $\cD_{T}$ contains all smooth functions with compact
support in $D$, (\ref{eq.generalized.perturbed}) implies
   $A^{\ast} A u_\varepsilon + \varepsilon\, u_\varepsilon
  = A^{\ast} f + \varepsilon\, h$
in $D$ in the sense of distributions.
The boundary condition $t (u_\varepsilon) = 0$ on $\iG$ follows from
Lemma \ref{l.HD}.
Finally, $n (A u_\varepsilon) = n (f)$ holds in the sense that
   $n (A u_\varepsilon - f)$ on $\partial D \setminus \iG^{\circ}$
because
\begin{eqnarray*}
    A^{\ast} (A u_\varepsilon - f) & = &   \varepsilon (h - u_{\varepsilon})
                                                 \\
                                   & \in & L^2 (D,E)
\end{eqnarray*}
in $D$ and
\begin{eqnarray*}
\lefteqn{
   \int_{D}
   \left( (A^{\ast} (A u_\varepsilon - f), v)_x - (A u_\varepsilon - f, Av)_x
   \right)
   dx
}
                                                 \\
 & &
  =\
   \varepsilon\, (h - u_\varepsilon, v)_{L^2 (D,E)}
 - (A u_\varepsilon - f, Av)_{L^2 (D,F)}
                                                 \\[.1cm]
 & &
  =\
   0
\end{eqnarray*}
for all $v \in C^{\infty} (\overline{D}, E)$ satisfying $t (v) = 0$ on $\iG$,
the latter equality being due to (\ref{eq.generalized.perturbed}).
If the restriction of $n (f)$ to $\partial D\setminus \iG^{\circ}$ makes sense,
then the restriction of $n (Au)$ does so.

If considered in appropriate function spaces, the mixed problem
(\ref{eq.mixed.perturbed}) gives certainly nothing but
(\ref{eq.generalized.perturbed}).

In general, mixed problems (\ref{eq.mixed}) and
                           (\ref{eq.mixed.perturbed})
have non-coercive boundary conditions on $\partial D \setminus \iG^{\circ}$.
Hence they fail to be well-posed in the relevant weighted Sobolev spaces,
   cf. \cite{HaruSchuX}.
The principal difference between Problems \ref{pr.Cauchy.HD} and
                                         \ref{pr.Cauchy.perturbed}
is that the last one is well-posed in $\cD_{T}$.

\begin{lem}
\label{l.solvable.perturbed}
For every $\varepsilon > 0$,
          $f \in L^2 (D,F)$ and
          $h \in L^2 (D,E)$
there exists a unique solution
   $u_\varepsilon (f,h) \in \cD_{T}$
to Problem \ref{pr.Cauchy.perturbed}.
Moreover, it satisfies
$$
   \| u_\varepsilon (f,h) \|_{\varepsilon}
 \leq
   \| f \|_{L^2 (D,F)} + \sqrt{\varepsilon}\, \| h \|_{L^2 (D,E)}.
$$
\end{lem}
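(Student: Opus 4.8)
The plan is to recognise Problem~\ref{pr.Cauchy.perturbed} as an instance of the Riesz representation theorem in the Hilbert space $(\cD_{T}, (\cdot,\cdot)_{\varepsilon})$.

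First I would check that, for fixed $\varepsilon > 0$, the Hermitian form $(\cdot,\cdot)_{\varepsilon}$ is a scalar product on $\cD_{T}$ and that $\cD_{T}$ is complete with respect to the associated norm $\| \cdot \|_{\varepsilon}$. Both facts follow at once from the two-sided estimate (\ref{eq.norms}): if $\| u \|_{\varepsilon} = 0$ then $D (u) = 0$, whence $u = 0$; and a sequence is Cauchy (resp. convergent) in $\| \cdot \|_{\varepsilon}$ if and only if it is so in $D (\cdot)$, so completeness is inherited from the Hilbert space $\cD_{T}$ of Theorem~\ref{t.HD}. Thus $(\cD_{T}, (\cdot,\cdot)_{\varepsilon})$ is a Hilbert space, and directly from the definition of $\| \cdot \|_{\varepsilon}$ one has $\| Au \|_{L^{2} (D,F)} \leq \| u \|_{\varepsilon}$ and $\sqrt{\varepsilon}\, \| u \|_{L^{2} (D,E)} \leq \| u \|_{\varepsilon}$ for every $u \in \cD_{T}$.

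Next I would view the right-hand side of (\ref{eq.generalized.perturbed}) as the conjugate-linear functional
$$
   \ell (v) = (f, Av)_{L^{2} (D,F)} + \varepsilon\, (h, v)_{L^{2} (D,E)}, \qquad v \in \cD_{T}.
$$
Combining the two inequalities just noted with the Cauchy--Schwarz inequality in $L^{2}$ gives
$$
   | \ell (v) |
 \leq
   \| f \|_{L^{2} (D,F)} \| Av \|_{L^{2} (D,F)}
 + \sqrt{\varepsilon}\, \| h \|_{L^{2} (D,E)} \bigl( \sqrt{\varepsilon}\, \| v \|_{L^{2} (D,E)} \bigr)
 \leq
   \bigl( \| f \|_{L^{2} (D,F)} + \sqrt{\varepsilon}\, \| h \|_{L^{2} (D,E)} \bigr) \| v \|_{\varepsilon},
$$
so $\ell$ is bounded on $(\cD_{T}, \| \cdot \|_{\varepsilon})$ with norm at most $\| f \|_{L^{2} (D,F)} + \sqrt{\varepsilon}\, \| h \|_{L^{2} (D,E)}$. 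By the Riesz representation theorem there is a unique $u_{\varepsilon} = u_{\varepsilon} (f,h) \in \cD_{T}$ with $(u_{\varepsilon}, v)_{\varepsilon} = \ell (v)$ for all $v \in \cD_{T}$, which is exactly (\ref{eq.generalized.perturbed}); the uniqueness assertion of Problem~\ref{pr.Cauchy.perturbed} is the uniqueness of the Riesz representative.

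Finally, the a priori bound comes from testing with $v = u_{\varepsilon}$: if $u_{\varepsilon} \neq 0$ then
$$
   \| u_{\varepsilon} \|_{\varepsilon}^{2}
 = (u_{\varepsilon}, u_{\varepsilon})_{\varepsilon}
 = \ell (u_{\varepsilon})
 \leq \bigl( \| f \|_{L^{2} (D,F)} + \sqrt{\varepsilon}\, \| h \|_{L^{2} (D,E)} \bigr) \| u_{\varepsilon} \|_{\varepsilon},
$$
and dividing by $\| u_{\varepsilon} \|_{\varepsilon}$ yields the claimed estimate (the case $u_{\varepsilon} = 0$ being trivial). There is no genuine obstacle here: the proof is a routine application of Riesz representation, and the only points deserving a line of care are the completeness of $(\cD_{T}, \| \cdot \|_{\varepsilon})$, handled by (\ref{eq.norms}), and the bookkeeping of which argument the sesquilinear forms are conjugate-linear in, so that $\ell$ really defines an element of the dual of $\cD_{T}$.
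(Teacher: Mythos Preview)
Your proof is correct and follows essentially the same approach as the paper's: both verify via (\ref{eq.norms}) that $(\cD_{T}, (\cdot,\cdot)_{\varepsilon})$ is a Hilbert space, bound the right-hand side of (\ref{eq.generalized.perturbed}) as a continuous functional with norm at most $\| f \|_{L^{2}(D,F)} + \sqrt{\varepsilon}\, \| h \|_{L^{2}(D,E)}$, and invoke the Riesz representation theorem. The only cosmetic difference is that the paper reads off the norm bound directly from the Riesz theorem, whereas you obtain it by testing with $v = u_{\varepsilon}$; these amount to the same thing.
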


\begin{proof}
Really, the estimates (\ref{eq.norms}) imply that the vector space $\cD_{T}$
endowed with the scalar product $(\cdot, \cdot)_\varepsilon)$ is a Hilbert
space.
The Schwarz inequality yields
\begin{eqnarray*}
\lefteqn{
   \left| (f, Av)_{L^2 (D,F)} + \varepsilon\, (h,v)_{L^2 (D,E)} \right|
}
                                                 \\[.1cm]
 & &
 \leq\
   \| f \|_{L^2 (D,F)} \| Av \|_{L^2 (D,F)}
 + \varepsilon\, \| h \|_{L^2 (D,F)}\, \| v \|_{L^2 (D,E)}
                                                 \\
 & &
 \leq\
   \| f \|_{L^2 (D,F)} \| v \|_{\varepsilon}
 + \sqrt{\varepsilon}\,
   \| h \|_{L^2 (D,F)}\,
   \sqrt{\varepsilon \| v \|^2_{L^2 (D,E)}}
                                                 \\
 & &
 \leq\
   c_\varepsilon (f,h)\, \| v \|_\varepsilon
\end{eqnarray*}
with
$$
   c_\varepsilon (f,h)
 = \| f \|_{L^2 (D,F)} + \sqrt{\varepsilon}\, \| h \|_{L^2 (D,E)}.
$$
Hence the map
$$
v \mapsto (f,Av)_{L^2 (D,F)} + \varepsilon\, (h,v)_{L^2 (D,E)}
$$
defines a continuous linear functional $\cF_{f,h}$ on $\cD_{T}$, whose norm is
majorised by
   $\| \cF_{f,h} \| \leq c_\varepsilon (f,h)$.

We now use the Riesz theorem to conclude that there exists a unique element
   $u_\varepsilon (f,h) \in \cD_{T}$
with
$$
   \cF_{f,h} (v)
 = (u_\varepsilon (f,h), v)_{\varepsilon}
$$
for every $v \in \cD_{T}$.
Clearly,
   $u_\varepsilon (f,h)$ is a solution to Problem \ref{pr.Cauchy.perturbed}.
Finally, by the  Riesz theorem we get
$$
\| u_\varepsilon (f,h) \|_{\varepsilon} \leq c_\varepsilon (f,h),
$$
as desired.
\end{proof}

The equations (\ref{eq.mixed.perturbed}) show that
Lemma \ref{l.solvable.perturbed} gives information on the solvability of a
mixed problem for the elliptic operator $A^{\ast} A + \varepsilon$ with very
special data on $D$, $\iG$ and $\partial D \setminus \iG^{\circ}$.
Let us clarify what kind solvability theorems can be obtained for arbitrary
data.

For a triple $w \in L^{2} (D,E)$ and
\begin{equation}
\label{eq.data}
\begin{array}{rcl}
   u_0
 & \in
 & \oplus_{j=0}^{m-1} H^{2m-m_{j}-1/2} (\iG, F_j),
                                                 \\
   u_1
 & \in
 & \oplus_{j=0}^{m-1} H^{m-m_{j}-1/2} (\partial D \setminus \iG^{\circ}, F_j),
\end{array}
\end{equation}
we investigate the problem of finding a section $u$ of the bundle $E$ over $D$
which satisfies
\begin{equation}
\label{eq.mixed.good}
\left\{ \begin{array}{rclcl}
          A^{\ast} A u + \varepsilon\, u
        & =
        & w
        & \mbox{in} & D;
                                                 \\
          t (u)
        & =
        & u_0
        & \mbox{on}
        & \iG,
                                                 \\
          n (A u)
        & =
        & u_{1}
        & \mbox{on}
        & \partial D \setminus \iG^{\circ},
        \end{array}
\right.
\end{equation}
the equations in $D$ and on the boundary of $D$ being understood in a proper
sense.
From what has already been proved it is clear what we mean by this proper
sense, namely
\begin{eqnarray}
\label{eq.proper}
   (Au, g)_{L^2 (D,F)} - (u, A^{\ast} g)_{L^2 (D,E)}
 & = &
   (u_0, n (g))_{\oplus L^2 (\iG, F_j)},
                                                 \nonumber
                                                 \\
   (u,v)_{\varepsilon}
 & = &
   (w,v)_{L^2 (D,E)}
 - (u_1, t (v))_{\oplus L^2 (\partial D \setminus \iG^{\circ}, F_j)}
                                                 \nonumber
                                                 \\
\end{eqnarray}
for all
   $g \in C^{\infty} (\overline{D}, F)$
satisfying
   $n (g) = 0$ on $\partial D \setminus \iG^{\circ}$,
and for all
   $v \in C^{\infty} (\overline{D}, E)$
satisfying
   $t (v) = 0$ on $\iG$,
respectively.

\begin{thm}
\label{t.mixed.good}
Let $(A^{\ast} A)^2$ possess the Unique Continuation Property $(U)_{s}$.
Then, for every triple $(w, u_0, u_1)$ there exists a unique solution
   $u \in \cD_{A} \cap H^{2m}_{\loc} (D \cup \iG^{\circ}, E)$
to Problem \ref{eq.mixed.good}.
Moreover, there is a constant $C (\varepsilon) > 0$ which does not depend on
$(w, u_0, u_1)$, such that
\begin{equation}
\label{eq.mixed.good.estimate}
   \| u \|_\varepsilon^2
 \leq
   C (\varepsilon)
   \Big(
   \| w \|_{L^{2} (D,E)}^2
 + \| u_0 \|_{\oplus H^{2m-m_{j}-1/2} (\iG, F_j)}^2
 + \| u_1 \|_{\oplus H^{m-m_{j}-1/2} (\partial D \setminus \iG^{\circ}, F_j)}^2
   \Big).
\end{equation}
\end{thm}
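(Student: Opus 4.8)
The strategy is to reduce the inhomogeneous mixed problem \eqref{eq.mixed.good} to the special case already handled in Lemma~\ref{l.solvable.perturbed} by subtracting off suitable ``lifts'' of the boundary data, then invoke the Riesz representation theorem on $\cD_{T}$ to obtain existence and uniqueness in the weak sense \eqref{eq.proper}, and finally use interior regularity up to $\iG^{\circ}$ to place the solution in $H^{2m}_{\loc} (D \cup \iG^{\circ}, E)$. The Unique Continuation Property for $(A^{\ast} A)^{2}$ enters only to guarantee uniqueness.

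\emph{Step 1: Lifting the Cauchy datum $u_{0}$ on $\iG$.} Since $u_{0} \in \oplus_{j=0}^{m-1} H^{2m-m_{j}-1/2} (\iG, F_{j})$, by a trace-extension argument (of the type used in Lemma~\ref{l.HDA} and the discussion of the embedding $\oplus H^{m-m_{j}-1/2} (\iG, F_{j}) \hookrightarrow B (\iG)$) there is a section $U_{0} \in H^{2m} (D, E)$ with $t (U_{0}) = u_{0}$ on $\iG$ and with $\| U_{0} \|_{H^{2m} (D,E)}$ controlled by $\| u_{0} \|_{\oplus H^{2m-m_{j}-1/2} (\iG, F_{j})}$; in particular $U_{0} \in \cD_{A}$ and $A^{\ast} A U_{0} \in L^{2} (D,E)$. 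Writing $u = U_{0} + U$, the problem for $U$ becomes one with homogeneous Cauchy datum on $\iG$, i.e.\ $U \in \cD_{T}$, with modified right-hand side $w - (A^{\ast} A + \varepsilon) U_{0} \in L^{2} (D,E)$ and modified Neumann-type datum $u_{1} - n (A U_{0})$ on $\partial D \setminus \iG^{\circ}$. Here $n (A U_{0}) \in \oplus H^{m-m_{j}-1/2} (\partial D \setminus \iG^{\circ}, F_{j})$ by the usual trace theorem for $H^{2m}$, so the new data are of the same regularity class.

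\emph{Step 2: The weak formulation and Riesz.} With $U_{0}$ removed, it suffices to solve: find $U \in \cD_{T}$ with
$$
(U, v)_{\varepsilon} = (\tilde w, v)_{L^{2} (D,E)} - (\tilde u_{1}, t (v))_{\oplus L^{2} (\partial D \setminus \iG^{\circ}, F_{j})}
$$
for all $v \in C^{\infty} (\overline{D}, E)$ with $t (v) = 0$ on $\iG$ — hence, by density, for all $v \in \cD_{T}$ — where $\tilde w$ and $\tilde u_{1}$ are the modified data. By \eqref{eq.norms} the space $(\cD_{T}, (\cdot,\cdot)_{\varepsilon})$ is Hilbert. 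The right-hand side is a continuous linear functional on $\cD_{T}$: the term $(\tilde w, v)_{L^{2}}$ is bounded by $\varepsilon^{-1/2} \| \tilde w \|_{L^{2}(D,E)} \| v \|_{\varepsilon}$, and the boundary term is bounded using the trace estimate $\| t (v) \|_{\oplus H^{-m_{j}-1/2} (\partial D, F_{j})} \lesssim D (v) \le \min\{1,\sqrt\varepsilon\}^{-1} \| v \|_{\varepsilon}$ from Lemma~\ref{l.HDA}, paired against $\tilde u_{1}$ which lies in the dual scale $\oplus H^{m-m_{j}-1/2}$. The Riesz theorem then produces a unique $U \in \cD_{T}$, and tracking the constants yields an estimate of the form \eqref{eq.mixed.good.estimate} with some $C (\varepsilon) > 0$. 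Setting $u = U_{0} + U$ gives a solution of \eqref{eq.mixed.good} in the sense \eqref{eq.proper}, lying in $\cD_{A}$.

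\emph{Step 3: Regularity and uniqueness.} That $A^{\ast} A u + \varepsilon u = w$ holds in $D$ in the distributional sense follows by testing \eqref{eq.proper} against $v \in C^{\infty}_{\comp} (D,E)$; since $A^{\ast} A$ is elliptic of order $2m$ and $w \in L^{2} (D,E)$, interior elliptic regularity gives $u \in H^{2m}_{\loc} (D, E)$. Up to $\iG^{\circ}$, the boundary conditions $t (u) = u_{0}$ form a Dirichlet system of order $m-1$ and $u_{0}$ has the stated Sobolev regularity, so the local regularity theorem for the Dirichlet problem for the elliptic operator $A^{\ast} A$ (as in Lemma~\ref{l.HDA.t.weakly}, quoting Theorem 9.3.17 of \cite{Tark36}) upgrades this to $u \in H^{2m}_{\loc} (D \cup \iG^{\circ}, E)$. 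For uniqueness, a difference of two solutions $u' \in \cD_{T}$ satisfies $(u', v)_{\varepsilon} = 0$ for all $v \in \cD_{T}$, so taking $v = u'$ gives $\| u' \|_{\varepsilon} = 0$ and $u' = 0$ already at the level of $\cD_{T}$; the hypothesis on $(A^{\ast} A)^{2}$ is what is needed to ensure that \emph{any} solution in $\cD_{A} \cap H^{2m}_{\loc} (D \cup \iG^{\circ}, E)$ — not merely one arising from the $\cD_{T}$ construction — actually lies in $\cD_{T}$ (so that $n (Au)$ and the full problem are rigidly pinned down), via a unique-continuation argument for the fourth-order operator governing the overdetermined compatibility conditions.

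\emph{The main obstacle.} The routine analytic heart is Step~1 — constructing the lift $U_{0}$ with the right norm bound and checking that $n (A U_{0})$ lands in the correct trace space — together with verifying the boundary-pairing estimate in Step~2; these require the duality between the $H^{m-m_{j}-1/2}$ and $H^{-m_{j}-1/2}$ scales on $\partial D$ and careful bookkeeping of the $\varepsilon$-dependence of $C (\varepsilon)$. The genuinely delicate point, however, is the role of the Unique Continuation Property for $(A^{\ast} A)^{2}$ in the uniqueness claim: one must argue that a homogeneous-data solution in $\cD_{A} \cap H^{2m}_{\loc} (D \cup \iG^{\circ}, E)$, which a priori only has vanishing Cauchy data on $\iG$ and vanishing $n (Au)$ on $\partial D \setminus \iG^{\circ}$ but need not lie in $\cD_{T}$, is forced to vanish — this is where the overdeterminedness of $A$ makes the argument subtler than the elliptic case and where $(U)_{s}$ for the iterated Laplacian is exactly the right hypothesis.
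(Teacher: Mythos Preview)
Your overall architecture---lift the boundary data, apply Riesz on $(\cD_T,(\cdot,\cdot)_\varepsilon)$, then invoke elliptic regularity---is sound, but two points diverge from the paper in ways that matter.

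\textbf{The boundary pairing in Step 2 is not controlled for $m\ge 2$.} You bound the functional $v\mapsto(\tilde u_1,t(v))_{\oplus L^2(\partial D\setminus\iG^\circ)}$ by pairing $\tilde u_1\in\oplus H^{m-m_j-1/2}$ against $t(v)\in\oplus H^{-m_j-1/2}$, calling the former ``the dual scale''. It is not: the dual of $H^{-m_j-1/2}$ under the $L^2$ pairing is $H^{m_j+1/2}$, and $H^{m-m_j-1/2}\hookrightarrow H^{m_j+1/2}$ only when $m_j\le(m-1)/2$. For $m\ge 2$ and $m_j$ near $m-1$ the pairing is not bounded by $\|v\|_\varepsilon$, and the Riesz step fails. (For $m=1$, as in the Dirac examples of \S\ref{s.Do}, your argument does go through.) The paper avoids this entirely: it extends $u_0,u_1$ to data $U_0,U_1$ on all of $\partial D$, observes that $\{t,\,n\circ A\}$ is a Dirichlet system of order $2m-1$, and solves the Dirichlet problem \eqref{eq.Dirichlet.good} for the \emph{fourth-order} operator $(A^\ast A)^2$ to produce $U'\in H^{2m}(D,E)$ with $t(U')=U_0$ and $n(AU')=U_1$ on $\partial D$. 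Both boundary data are thereby absorbed into $U'$; the remainder $U''=u-U'$ lies in $\cD_T$ and satisfies Problem~\ref{pr.Cauchy.perturbed} with $f=0$ and $h=\varepsilon^{-1}(w-A^\ast AU')-U'$, so Lemma~\ref{l.solvable.perturbed} applies with no boundary term at all.

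\textbf{The r\^ole of $(U)_s$ for $(A^\ast A)^2$ is misplaced.} You invoke it for uniqueness, but uniqueness needs nothing beyond Theorem~\ref{t.HD}: if $u\in\cD_A$ solves \eqref{eq.mixed.good} with homogeneous data, the first line of \eqref{eq.proper} with $u_0=0$ is exactly the condition $t(u)=0$ on $\iG$ in the sense of \eqref{eq.def.t}, so $u\in\cD_T$ by Theorem~\ref{t.HD}; the second line then gives $(u,v)_\varepsilon=0$ for all $v\in\cD_T$, whence $u=0$. The Unique Continuation hypothesis is used instead in the \emph{existence} step, to guarantee that the Dirichlet problem \eqref{eq.Dirichlet.good} for $(A^\ast A)^2$ is uniquely solvable with the a~priori estimate \eqref{eq.psi2}; without it one cannot construct the lift $U'$ with a norm bound, and the final estimate \eqref{eq.mixed.good.estimate} does not follow.
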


\begin{proof}
Choose arbitrary $u_0$ and
                 $u_1$
as in (\ref{eq.data}).
Obviously, there are sections
$$
\begin{array}{rcl}
   U_0
 & \in
 & \oplus_{j=0}^{m-1} H^{2m-m_{j}-1/2} (\partial D, F_j),
                                                 \\
   U_1
 & \in
 & \oplus_{j=0}^{m-1} H^{m-m_{j}-1/2} (\partial D, F_j),
\end{array}
$$
such that
   $U_0 = u_0$ on $\iG$,
   $U_1 = u_1$ on $\partial D \setminus \iG^{\circ}$
and
\begin{eqnarray}
\label{eq.psi}
\lefteqn{
   \| U_0 \|_{\oplus H^{2m-m_{j}-1/2} (\partial D, F_j)}^2
 + \| U_1 \|_{\oplus H^{m-m_{j}-1/2} (\partial D, F_j)}^2
}
                                                 \nonumber
                                                 \\
 & &
 \leq
   2 \Big(
   \| u_0 \|_{\oplus H^{2m-m_{j}-1/2} (\iG, F_j)}^2
 + \| u_1 \|_{\oplus H^{m-m_{j}-1/2} (\partial D \setminus \iG^{\circ}, F_j)}^2
     \Big).
                                                 \nonumber
                                                 \\
\end{eqnarray}

As the pair $\{ t, n \circ A \}$ is a Dirichlet system of order $2m-1$ on
$\partial D$, solving the Dirichlet problem for $(A^{\ast} A)^2$ yields a
section $U' \in H^{2m} (D,E)$ with the following properties
\begin{equation}
\label{eq.Dirichlet.good}
\left\{ \begin{array}{rclcl}
          (A^{\ast} A)^{2}\, U'
        & =
        & 0
        & \mbox{in} & D;
                                                 \\
          t (U')
        & =
        & U_0
        & \mbox{on}
        & \partial D,
                                                 \\
          n (A U')
        & =
        & U_{1}
        & \mbox{on}
        & \partial D.
        \end{array}
\right.
\end{equation}
Moreover, there exists a positive constant $C > 0$ which is independent of $U$,
such that
\begin{equation}
\label{eq.psi2}
   \| U' \|_{H^{2m} (D,E)}^2
 \leq
   C
   \Big( \| U_0 \|_{\oplus H^{2m-m_{j}-1/2} (\partial D, F_j)}^2
       + \| U_1 \|_{\oplus H^{m-m_{j}-1/2} (\partial D, F_j)}^2
   \Big),
\end{equation}
see for instance \cite{Tark36}.

According to Lemma \ref{l.solvable.perturbed} there exists a solution
$U'' \in \cD_{T}$ to Problem \ref{pr.Cauchy.perturbed} with $f = 0$ and
\begin{eqnarray*}
   h
 & = &
   \frac{1}{\varepsilon} \left( w - A^{\ast} A U' \right) - U'
                                                 \\
 & \in &
   L^2 (D,E).
\end{eqnarray*}

Set $u = U' + U''$.
Then, integrating by parts and
      using Lemma \ref{l.solvable.perturbed}
we easily obtain
\begin{eqnarray*}
   (u,v)_{\varepsilon}
 & = &
   ((A^{\ast} A + \varepsilon) U', v)_{L^2 (D,E)}
 - (n (AU'), t (v))_{\oplus L^2 (\partial D, F_j)}
                                                 \\
 & + &
   (w,v)_{L^2 (D,E)}
 - ((A^{\ast} A + \varepsilon) U', v)_{L^2 (D,E)}
                                                 \\
 & = &
   (w, v)_{L^2 (D,E)}
 - (u_1, t (v))_{\oplus L^2 (\partial D \setminus \iG^{\circ}, F_j)}
\end{eqnarray*}
for every $v \in C^{\infty} (\overline{D}, E)$ satisfying $t (v) = 0$ on $\iG$,
i.e., the second equality of (\ref{eq.proper}) holds true.

On the other hand,
   for every $g \in C^{\infty} (\overline{D}, F)$ satisfying $n (g) = 0$ on
   $\partial D \setminus \iG^{\circ}$,
we get
\begin{eqnarray*}
\lefteqn{
(Au,g)_{L^2 (D,F)} - (u,A^{\ast} g)_{L^2 (D,E)}
}
                                                 \\
 & = &
   (A U',g)_{L^2 (D,F)} - (U',A^{\ast} g)_{L^2 (D,E)} +
   (A U'',g)_{L^2 (D,F)} - (U'',A^{\ast} g)_{L^2 (D,E)}
                                                 \\
 & = &
  (A U',g)_{L^2 (D,F)} - (U',A^{\ast} g)_{L^2 (D,E)}
\end{eqnarray*}
because $U'' \in \cD_{T}$.
Once again integrating by parts we obtain
\begin{eqnarray*}
   (AU',g)_{L^2 (D,F)} - (U',A^{\ast} g)_{L^2 (D,E)}
 & = &
   (t (U'), n (g))_{\oplus L^2 (\partial D,F_j)}
                                                 \\
 & = &
   (u_0, n (g))_{\oplus L^2 (\iG, F_j)},
\end{eqnarray*}
i.e., the first equality of (\ref{eq.proper}) is fulfilled.

By the elliptic regularity of the Dirichlet problem for the operator
$A^{\ast} A + \varepsilon$ we deduce that
   $u \in H^{2m}_{\loc} (D \cup \iG^{\circ}, E)$.

If all of $w$ and $u_{0}$, $u_{1}$ vanish then (\ref{eq.proper}) and
                                               Theorem \ref{t.HD}
imply that the corresponding solution $u$ lies in $\cD_{T}$.
On the other hand, the second equality of (\ref{eq.proper}) means that $u$ is
orthogonal to $\cD_{T}$ with respect to $(\cdot, \cdot)_{\varepsilon}$, i.e.,
$u \equiv 0$ which proves the uniqueness.

Finally, according to Lemma \ref{l.solvable.perturbed} we get
\begin{eqnarray*}
   \| u \|_{\varepsilon}
 \! & \! \leq \! & \!
   \| U' \|_\varepsilon + \| U'' \|_\varepsilon
                                                 \\
 \! & \! \leq \! & \!
   c\, \| U' \|_{H^{2m} (D,E)}
 + \frac{1}{\sqrt{\varepsilon}}
   \Big( \| w \|_{L^2 (D,E)} + \| A^{\ast} A U' \|_{L^2 (D,E)} \Big)
 + \sqrt{\varepsilon} \| U' \|_{L^2 (D,E)}.
\end{eqnarray*}
Combining this estimate with (\ref{eq.psi}) and
                             (\ref{eq.psi2})
we arrive at (\ref{eq.mixed.good.estimate}), as desired.
\end{proof}

One sees that the regularity up to $\partial D$ of the solution $u$ in
Theorem \ref{t.mixed.good} fails to correspond to the smoothness of the data
$w$ and $u_{0}$, $u_{1}$.
To justify this we recall that the boundary conditions $n \circ A$ on
$\partial D \setminus \iG^{\circ}$ are not coercive in general.
Were $n \circ A$ coercive we would have
   $u \in H^{2m}_{\loc} (\overline{D} \setminus \partial \iG, E)$.
However, we could not guarantee even in this case that $u \in H^{s} (D,E)$ for
some $s > m$ unless certain additional conditions were imposed on the triple
$(w,u_{0},u_{1})$ on $\partial \iG$.
This is typical for the mixed problems, cf. \cite{Eski1},
                                            \cite{HaruSchuX}
and elsewhere.

\section{The main theorem}
\label{s.Tmt}
\setcounter{equation}{0}

Set  $u_\varepsilon (f) = u_\varepsilon (f,0)$.
The inequalities (\ref{eq.norms}) and
                 Lemma \ref{l.solvable.perturbed}
give us a rough estimate for the family
   $\{ u_\varepsilon (f) \}_{\varepsilon > 0}$,
namely
$$
   D (u_\varepsilon (f))
 \leq
   \frac{1}{\sqrt{\varepsilon}}\, \| f \|_{L^2 (D,F)}.
$$
Thus, it might be unbounded while $\varepsilon \to 0+$.

Let us see how the behaviour of the family
   $\{ u_\varepsilon (f) \}_{\varepsilon > 0}$
reflects on the solvability of Problem \ref{pr.Cauchy.HD}.

\begin{thm}
\label{t.solvable.Cauchy}
The family $\{ u_\varepsilon (f) \}_{\varepsilon > 0}$ is bounded in $\cD_{T}$
if and only if there exists $u \in \cD_{T}$ satisfying (\ref{eq.generalized}).
\end{thm}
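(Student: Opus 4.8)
The plan is to exploit the variational characterization of $u_\varepsilon(f)$ as the Riesz representative of the functional $v \mapsto (f, Av)_{L^2(D,F)}$ in the Hilbert space $(\cD_T, (\cdot,\cdot)_\varepsilon)$, together with the monotone dependence of the $\varepsilon$-inner product on $\varepsilon$. Concretely, $u_\varepsilon(f)$ is defined by $(u_\varepsilon(f), v)_\varepsilon = (f, Av)_{L^2(D,F)}$ for all $v \in \cD_T$; in particular, testing against $v = u_\varepsilon(f)$ gives $\|u_\varepsilon(f)\|_\varepsilon^2 = (f, Au_\varepsilon(f))_{L^2(D,F)}$.

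For the \emph{easy direction}, suppose there is a solution $u \in \cD_T$ of (\ref{eq.generalized}); note such an $f$ automatically lies in the closure of $A\,\cD_T$ by Lemma~\ref{l.Necessary.back.1} (condition $3)$ follows from (\ref{eq.generalized}) by taking $v$ of compact support). Then $(f, Av)_{L^2(D,F)} = (Au, Av)_{L^2(D,F)}$, so the defining relation for $u_\varepsilon(f)$ becomes $(Au_\varepsilon(f) - Au, Av)_{L^2(D,F)} + \varepsilon(u_\varepsilon(f), v)_{L^2(D,E)} = 0$. Rearranging, $(u_\varepsilon(f), v)_\varepsilon = (Au, Av)_{L^2(D,F)} = (u, v)_\varepsilon - \varepsilon(u,v)_{L^2(D,E)}$ for all $v \in \cD_T$, whence $u_\varepsilon(f) = u - \varepsilon(T^\ast T + \varepsilon)^{-1}u$ in the operator language of the introduction; more elementarily, putting $v = u_\varepsilon(f)$ and using Schwarz yields $\|u_\varepsilon(f)\|_\varepsilon^2 \le \|Au\|_{L^2(D,F)}\|Au_\varepsilon(f)\|_{L^2(D,F)} \le \|Au\|_{L^2(D,F)}\|u_\varepsilon(f)\|_\varepsilon$, so $\|u_\varepsilon(f)\|_\varepsilon \le \|Au\|_{L^2(D,F)}$; combined with (\ref{eq.norms}) this bounds $D(u_\varepsilon(f))$ uniformly in $\varepsilon \le 1$, and boundedness for $\varepsilon$ away from $0$ is trivial from Lemma~\ref{l.solvable.perturbed}.

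For the \emph{hard direction}, assume $\{u_\varepsilon(f)\}_{\varepsilon>0}$ is bounded in $\cD_T$, say $D(u_\varepsilon(f)) \le M$. Since $\cD_T$ is a Hilbert space, extract a sequence $\varepsilon_k \searrow 0$ with $u_{\varepsilon_k}(f) \rightharpoonup u$ weakly in $\cD_T$ for some $u \in \cD_T$. The goal is to pass to the limit in the defining relation $(Au_{\varepsilon_k}(f), Av)_{L^2(D,F)} + \varepsilon_k(u_{\varepsilon_k}(f), v)_{L^2(D,E)} = (f, Av)_{L^2(D,F)}$. Weak convergence in $\cD_T$ gives $Au_{\varepsilon_k}(f) \rightharpoonup Au$ weakly in $L^2(D,F)$, so the first term converges to $(Au, Av)_{L^2(D,F)}$; the middle term is bounded by $\varepsilon_k M \|v\|_{L^2(D,E)} \to 0$. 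Hence $u$ satisfies (\ref{eq.generalized}). The one subtlety to address carefully is that I should confirm $f$ lies in the closure of $A\,\cD_T$ before invoking Lemma~\ref{l.generalized} to identify (\ref{eq.generalized}) with genuine solvability of Problem~\ref{pr.Cauchy.HD}; but in fact Theorem~\ref{t.solvable.Cauchy} as stated only asserts the equivalence with (\ref{eq.generalized}), so this step is not strictly needed — the weak limit argument suffices. I expect the main obstacle to be purely expository: making sure the weak-convergence passage to the limit is written cleanly, since $Av$ for a \emph{fixed} $v \in \cD_T$ is a legitimate test element of $L^2(D,F)$ and everything is linear, so no compactness beyond weak sequential compactness of bounded sets in Hilbert space is actually required.
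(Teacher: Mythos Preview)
Your hard direction (boundedness $\Rightarrow$ existence of a solution to (\ref{eq.generalized})) is correct and matches the paper's Lemma~\ref{l.solvable.Cauchy.1}: extract a weakly convergent subsequence in $\cD_T$, use that $A:\cD_T\to L^2(D,F)$ is bounded to pass to the weak limit in the first term, and kill the $\varepsilon$-term by the uniform $L^2$ bound. Nothing to add there.

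The easy direction has a genuine gap. From the Schwarz argument you correctly obtain
\[
\|u_\varepsilon(f)\|_\varepsilon \le \|Au\|_{L^2(D,F)},
\]
but the claim that this, via (\ref{eq.norms}), bounds $D(u_\varepsilon(f))$ uniformly for $\varepsilon\le 1$ is false: (\ref{eq.norms}) only gives $\sqrt{\varepsilon}\,D(u_\varepsilon(f))\le \|u_\varepsilon(f)\|_\varepsilon$, hence $D(u_\varepsilon(f))\le \|Au\|_{L^2}/\sqrt{\varepsilon}$, which blows up. Concretely, $\|u_\varepsilon(f)\|_\varepsilon\le C$ controls $\|Au_\varepsilon(f)\|_{L^2}$ uniformly but only yields $\|u_\varepsilon(f)\|_{L^2}\le C/\sqrt{\varepsilon}$, and it is precisely the $L^2(D,E)$ part of the graph norm that your estimate misses.

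The fix is the one you gestured at but did not carry through: work with the \emph{difference} $R_\varepsilon:=u_\varepsilon(f)-u$. From (\ref{eq.generalized}) and (\ref{eq.generalized.perturbed}) one gets
\[
(R_\varepsilon,v)_\varepsilon=-\varepsilon\,(u,v)_{L^2(D,E)}\qquad(v\in\cD_T),
\]
so $R_\varepsilon=u_\varepsilon(0,-u)$ and Lemma~\ref{l.solvable.perturbed} gives $\|R_\varepsilon\|_\varepsilon\le \sqrt{\varepsilon}\,\|u\|_{L^2(D,E)}$. Now the factor $\sqrt{\varepsilon}$ cancels the bad constant in (\ref{eq.norms}), yielding $D(R_\varepsilon)\le \|u\|_{L^2(D,E)}$ uniformly, whence $D(u_\varepsilon(f))\le D(u)+\|u\|_{L^2(D,E)}$. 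This is exactly the paper's Lemma~\ref{l.solvable.Cauchy.2}, and your operator-theoretic remark $u_\varepsilon(f)=u-\varepsilon(T^\ast T+\varepsilon)^{-1}u$ is the same identity; you just need to extract the bound from it rather than switch to the insufficient Schwarz estimate.
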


\begin{proof}
We first prove the following lemma.

\begin{lem}
\label{l.solvable.Cauchy.1}
Let there be a set
  $\Delta \subset (0,+\infty)$,
such that
\begin{enumerate}
   \item [ $1)$ ]
zero is an accumulation point of $\Delta$;
   \item [ $2)$ ]
the family $\{ u_\delta (f) \}_{\delta \in \Delta}$ is bounded in $\cD_{T}$.
\end{enumerate}
Then  there exists $u \in \cD_{T}$ satisfying (\ref{eq.generalized}).
\end{lem}

\begin{proof}
Suppose
   zero is an accumulation point of $\Delta$
and
   the family $\{ u_\delta (f) \}_{\delta \in \Delta}$ is bounded in $\cD_{T}$.
By (\ref{eq.generalized.perturbed}), we have
$$
   (A u_{\delta} (f), Av)_{L^2 (D,F)}
 + \delta\, (u_{\delta} (f), v)_{L^2 (D,E)}
 = (f, Av)_{L^2(D,F)}
$$
for all $v \in \cD_{T}$.
Passing to the limit, when $\Delta \ni \delta \to 0$, in the last equality
and using the fact that $\{ u_\delta (f) \}_{\delta \in \Delta}$ is bounded,
we obtain
\begin{equation}
\label{eq.delta.1}
   \lim_{\delta \to 0+} (A u_{\delta} (f), Av)_{L^2 (D,F)}
 = (f, Av)_{L^2 (D,F)}
\end{equation}
for all $v \in \cD_{T}$.

It is well known that every bounded set in a Hilbert  space is weakly compact.
Hence there is a subsequence
   $\{ u _{\delta_j} (f) \} \subset \cD_{T}$
weakly convergent in $\cD_{T}$ to an element $u \in \cD_{T}$.
Here, $\{ \delta_{j} \}$ converges to $0$ when $j \to \infty$.

Note that (\ref{eq.generalized.perturbed}) implies
$$
(u_\varepsilon (f), v)_{L^2 (D,E)} = 0
$$
for all $v \in \cD_{T} \cap \Sol_{A} (D)$, i.e., both
   $\{ u_{\delta_j} (f) \}$ and
   $u$
are $L^2 (D,E)\,$-orthogonal to $\cD_{T} \cap \Sol_{A} (D)$.
Let us show that
   $\{ u_{\delta_j} (f) \} $
converges weakly to $u$ in $L^2 (D,E)$ when $j \to \infty$.

Given any $v \in L^{2} (D,E)$, the map
   $u \mapsto (u,v)_{L^2 (D,E)}$
defines a continuous linear functional $\cF_v$ on $\cD_{T}$ with
   $\| \cF_v \| \leq \| v \|_{L^{2} (D,E)}$.
We now invoke the Riesz representation theorem to conclude that there exists a
unique element $\tilde{v} \in \cD_{T}$ with
   $D (u, \tilde{v}) = \cF_v (u)$
for every $u \in \cD_{T}$.
Hence
\begin{eqnarray*}
   \lim_{j \to \infty}
   (u_{\delta_j} (f), v)_{L^2 (D,E )}
 & = &
   \lim_{j \to \infty}
   D (u_{\delta_j} (f), \tilde{v})
                                                 \\
 & = &
   D (u, \tilde{v})
                                                 \\
 & = &
   (u, v)_{L^2 (D,E )}.
\end{eqnarray*}
This exactly means that
   $\{ u_{\delta_j} (f) \}$
converges weakly in $L^2 (D,E)$.

Now we easily calculate
\begin{eqnarray}
\label{eq.delta.2}
   \lim_{\Delta \ni \delta \to 0+}
   (A u_{\delta} (f), Av)_{L^2 (D,F)}
 & = &
   \lim_{\Delta \ni \delta \to 0+}
   \left( D (u_{\delta} (f), v) - (u_{\delta} (f), v)_{L^2 (D,E)} \right)
                                                 \nonumber
                                                 \\
 & = &
   D (u,v) - (u,v)_{L^2 (D,E)}
                                                 \nonumber
                                                 \\
 & = &
   (Au, Av)_{L^2 (D,F)}
                                                 \nonumber
                                                 \\
\end{eqnarray}
for all $v \in \cD_{T}$.
Combining (\ref{eq.delta.1}) and
          (\ref{eq.delta.2})
we see that  (\ref{eq.generalized}) holds true for $u$.
\end{proof}

Note that if (\ref{eq.generalized}) is solvable then there exists a solution
$u$ which is $L^2 (D,E)\,$-orthogonal to $\cD_{T} \cap \Sol_{A} (D)$.

We will have a stronger statement than Theorem \ref{t.solvable.Cauchy} if we
prove the following lemma.

\begin{lem}
\label{l.solvable.Cauchy.2}
If there exists $u \in \cD_{T}$ satisfying (\ref{eq.generalized}) then the
family
   $\{ u_\varepsilon (f) \}_{\varepsilon > 0}$
is bounded in $\cD_{T}$ and
$$
   \lim_{\varepsilon \to 0+} \| A (u_\varepsilon - u) \|_{L^2 (D,F)}
 = 0.
$$
Moreover,
   $\{ u_\varepsilon (f) \}_{\varepsilon > 0}$
converges weakly to $u \in \cD_{T}$ as  $\varepsilon \to 0+$,
if $u$ is $L^2 (D,E)\,$-orthogonal to $\cD_{T} \cap \Sol_{A} (D)$.
\end{lem}

\begin{proof}
Let there exist $u \in \cD_{T}$ satisfying (\ref{eq.generalized}).
Set $R_\varepsilon = u_\varepsilon (f) - u$.
Then (\ref{eq.generalized}) and
     (\ref{eq.generalized.perturbed})
imply
\begin{equation}
\label{eq.generalized.perturbed.2}
   (A R_\varepsilon, Av)_{L^2 (D,F)}
 + \varepsilon\, (R_\varepsilon, v)_{L^2 (D,E)}
 = - \varepsilon\, (u,v)_{L^2 (D,E)}
\end{equation}
for all $v \in \cD_{T}$, i.e.,
   $R_\varepsilon = u_\varepsilon (0, -u)$
is the solution to Problem \ref{pr.Cauchy.perturbed} with $f = 0$ and
                                                          $h = - u$.

According to (\ref{eq.norms}) and
             Lemma \ref{l.solvable.perturbed}
we have
\begin{eqnarray*}
   D (R_\varepsilon)
 & \leq &
   \frac{1}{\sqrt{\varepsilon}}\,
   \| R_\varepsilon \|_{\varepsilon}
                                                 \\
 & \leq &
   \frac{1}{\sqrt{\varepsilon}}\,
   \sqrt{\varepsilon}\,
   \| u \|_{L^2 (D,E)}
                                                 \\
 & = &
   \| u \|_{L^2 (D,E)}.
\end{eqnarray*}
Therefore, the family
   $\{ R_\varepsilon \}_{\varepsilon > 0}$
is bounded in $\cD_{T}$, and so the family
   $\{ u_\varepsilon (f) \}_{\varepsilon > 0}$
is bounded, too.
Now (\ref{eq.generalized.perturbed.2}) implies
\begin{eqnarray*}
   \lim_{\varepsilon \to 0+}
   \| A (u_\varepsilon (f) - u) \|_{L^2 (D,F)}^2
 & = &
   \lim_{\varepsilon \to 0+}
   \| A R_\varepsilon \|_{L^2 (D,F)}^2
                                                 \\
 & = &
 - \lim_{\varepsilon \to 0+}
   \varepsilon
   \left( \| R_\varepsilon \|^2_{L^2 (D,E)} + (u,R_\varepsilon)_{L^2 (D,E)}
   \right)
                                                 \\
 & = &
   0.
\end{eqnarray*}

Finally, let us prove that
   $\{ u_\varepsilon (f) \}_{\varepsilon > 0}$
converges weakly to $u$ in $\cD_{T}$ as $\varepsilon \to 0+$, provided that
$u$ is $L^2 (D,E)\,$-orthogonal to $\cD_{T} \cap \Sol_{A} (D)$.
We argue by contradiction.
Indeed, if
   $\{ u_\varepsilon (f) \}_{\varepsilon > 0}$
does not converge weakly to $u$ in $\cD_{T}$ then there are
   $v \in \cD_{T}$,
   $\gamma > 0$
and a sequence
   $\{ \varepsilon_j \}$ tending to $0 +$ as $j \to \infty$,
such that
\begin{equation}
\label{eq.contradiction}
   |D (u_{\varepsilon_j} - u, v)|
 \geq
   \gamma
\end{equation}
for every $j \in \N$.
But the sequence $\{ u_{\varepsilon_j} \}$ is bounded in the Hilbert space
$\cD_{T}$, and so it possesses a subsequence which converges weakly in
$\cD_{T}$.
By abuse of notation we denote it again by $\{ u_{\varepsilon_j} \}$.
As we have already seen in the proof of Lemma \ref{l.solvable.Cauchy.1}, the
weak limit of $\{ u_{\varepsilon_j} \}$ is $u$.
This contradicts (\ref{eq.contradiction}), and thus the assertion of the lemma
is proved.
\end{proof}

The proof of Theorem \ref{t.solvable.Cauchy} is complete.
\end{proof}

Note that if Problem \ref{pr.Cauchy.HD} is solvable then there exists a unique
solution $u$ which is $L^2 (D,E)\,$-orthogonal to $\cD_{T} \cap \Sol_{A} (D)$.

\begin{cor}
\label{c.solvable.Cauchy}
Suppose $f$ belongs to the closure of $A\, \cD_{T}$ in $L^{2} (D,F)$.
Then the family
   $\{ u_\varepsilon (f) \}_{\varepsilon > 0}$
is bounded in $\cD_{T}$ if and only if Problem \ref{pr.Cauchy.HD} is solvable.
Moreover,
$$
   \lim_{\varepsilon \to 0+}
   \| A u_\varepsilon (f) - f \|_{L^2 (D,F)}
 = 0
$$
and even
   $\{ u _\varepsilon (f) \}_{\varepsilon > 0}$
converges weakly, when $\varepsilon \to 0+$, to the solution $u \in \cD_{T}$
of Problem \ref{pr.Cauchy.HD} which is $L^2 (D,E)\,$-orthogonal to
$\cD_{T} \cap \Sol_{A} (D)$.
\end{cor}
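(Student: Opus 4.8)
The plan is to reduce Corollary~\ref{c.solvable.Cauchy} to the two previous results, namely Theorem~\ref{t.solvable.Cauchy} together with Lemmas~\ref{l.solvable.Cauchy.1}, \ref{l.solvable.Cauchy.2}, by first observing that under the hypothesis ``$f$ belongs to the closure of $A\, \cD_{T}$ in $L^{2} (D,F)$'' the generalised equation \eqref{eq.generalized} is actually equivalent to Problem~\ref{pr.Cauchy.HD}. One direction is trivial: a solution of $Au = f$ in $\cD_{T}$ clearly satisfies \eqref{eq.generalized}. For the converse, suppose $u \in \cD_{T}$ satisfies \eqref{eq.generalized}; this is precisely the hypothesis of Lemma~\ref{l.generalized}, whose conclusion — given that $f$ lies in the closure of $A\, \cD_{T}$ — is that $Au = f$ in $D$, i.e.\ $u$ solves Problem~\ref{pr.Cauchy.HD}. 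Hence the boundedness of $\{ u_{\varepsilon} (f) \}_{\varepsilon > 0}$ in $\cD_{T}$ is, by Theorem~\ref{t.solvable.Cauchy}, equivalent to the existence of a solution of \eqref{eq.generalized}, which in turn is equivalent to the solvability of Problem~\ref{pr.Cauchy.HD}. This gives the first assertion of the corollary.

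Next I would establish the convergence statements. Assume Problem~\ref{pr.Cauchy.HD} is solvable. As noted in the remark following Theorem~\ref{t.solvable.Cauchy}, among the solutions there is one, call it $u$, which is $L^{2} (D,E)\,$-orthogonal to $\cD_{T} \cap \Sol_{A} (D)$; it is unique with this property because the difference of two such solutions lies in $\cD_{T} \cap \Sol_{A} (D)$ and is simultaneously orthogonal to that subspace, hence zero. Since $u$ solves \eqref{eq.generalized}, Lemma~\ref{l.solvable.Cauchy.2} applies verbatim: it yields that $\{ u_{\varepsilon} (f) \}$ is bounded in $\cD_{T}$, that
\[
   \lim_{\varepsilon \to 0+} \| A (u_{\varepsilon} (f) - u) \|_{L^{2} (D,F)} = 0,
\]
and, because $u$ is orthogonal to $\cD_{T} \cap \Sol_{A} (D)$, that $u_{\varepsilon} (f)$ converges weakly to $u$ in $\cD_{T}$ as $\varepsilon \to 0+$. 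Finally, since $Au = f$ in $D$, we may rewrite the displayed limit as $\lim_{\varepsilon \to 0+} \| A u_{\varepsilon} (f) - f \|_{L^{2} (D,F)} = 0$, which is the remaining claim.

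There is essentially no obstacle here; the work has been done in the preceding lemmas, and the only genuinely new points are the two elementary equivalences. The mild subtlety worth spelling out is why, under the closure hypothesis on $f$, equation \eqref{eq.generalized} forces $Au = f$ rather than merely $A^{\ast}(Au - f) = 0$ with an admissible boundary condition: this is exactly the content of Lemma~\ref{l.generalized}, whose proof uses that $Au$ and $f$ both lie in the closure of $A\, \cD_{T}$ and invokes Lemma~\ref{l.Necessary}~$3)$ to conclude $\| Au - f \|_{L^{2} (D,F)} = 0$. I would cite Lemma~\ref{l.generalized} directly rather than reproduce that argument. Thus the proof of Corollary~\ref{c.solvable.Cauchy} is a short assembly: combine Lemma~\ref{l.generalized} (to pass between Problem~\ref{pr.Cauchy.HD} and \eqref{eq.generalized}) with Theorem~\ref{t.solvable.Cauchy} and Lemma~\ref{l.solvable.Cauchy.2}.
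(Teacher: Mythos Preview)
Your proposal is correct and follows exactly the paper's approach: the paper's proof is the single sentence ``This follows from Theorem~\ref{t.solvable.Cauchy} and Lemmas~\ref{l.solvable.Cauchy.2} and~\ref{l.generalized},'' and you have simply unpacked that citation in detail. The extra care you take in noting why the $L^{2}$-orthogonal solution exists and is unique, and in rewriting the limit using $Au=f$, is fine elaboration but not a different route.
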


\begin{proof}
This follows from Theorem \ref{t.solvable.Cauchy} and
                  Lemmas \ref{l.solvable.Cauchy.2} and
                         \ref{l.generalized}.
\end{proof}

Is it true that
   $\{ u_\varepsilon (f) \}_{\varepsilon > 0}$
converges to $u$ in the topology of $H^{m}_{\loc} (D \cup \iG^{\circ}, E)$
if $u \in \cD_{T}$ is the solution to Problem \ref{pr.Cauchy.HD} which is
$L^2 (D,E)\,$-orthogonal to $\cD_{T} \cap \Sol_{A} (D)$?
To answer this question we observe, by Lemma \ref{l.solvable.Cauchy.2}, that
the family
   $\{ u_\varepsilon (f) - u \}_{\varepsilon > 0}$
is bounded in $\cD_{T}$ and
\begin{eqnarray*}
   \lim_{\varepsilon \to 0+}
   \| A (u_\varepsilon (f) - u) \|_{L^2 (D,F)}
 & = &
   0,
                                                 \\
   t (u_\varepsilon (f) - u)
 & = &
   0
\end{eqnarray*}
on $\iG$ for every $\varepsilon > 0$.
Then, applying \cite[Theorem 7.2.6]{Tark37} we see that
   $\{ u_\varepsilon (f) \}_{\varepsilon > 0}$
converges to $u$ in $H^{m}_{\loc} (D \cup \iG^{\circ}, E)$.

\section{The well-posed case}
\label{s.6}
\setcounter{equation}{0}

It is well known that a linear operator $T : H \to \t{H}$ in normed spaces has
a continuous inverse if and only if
   $\| u \|_{H} \leq c\, \| Tu \|_{\t{H}}$
for every $u \in H$, the constant $c > 0$ being independent of $u$.
Hence, the (Cauchy) Problem \ref{pr.Cauchy.HD} is well-posed if and only if
there exists a constant $c > 0$ such that
\begin{equation}
\label{eq.well-posed}
\| u \|_{L^2 (D,E)} \leq c \| Au \|_{L^2 (D,F)}
\end{equation}
for all $u \in \cD_{T}$.

\begin{thm}
\label{t.well-posed.Cauchy}
Let the (Cauchy) Problem \ref{pr.Cauchy.HD} be well posed.
Then for every $f \in L^2 (D, F)$ there exists a limit
$$
u = \lim_{\varepsilon \to 0+} u_\varepsilon (f)
$$
in $\cD_{T}$.
Moreover, $u$ is the solution to Problem \ref{pr.Cauchy.HD} if $f$ belongs to
the closure of $A\, \cD_{T}$ in $L^{2} (D,F)$.
\end{thm}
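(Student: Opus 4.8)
The plan is to show that well-posedness gives us uniform control on the whole family $\{u_\varepsilon(f)\}$, strong enough to pass from weak convergence (which we already have in various forms from Section~4) to norm convergence in $\cD_T$. First I would record the key consequence of the well-posedness estimate \eqref{eq.well-posed}: combined with \eqref{eq.norms}, it shows that on $\cD_T$ the three norms $D(\cdot)$, $\|\cdot\|_\varepsilon$, and $\|A\cdot\|_{L^2(D,F)}$ are all equivalent, with the constant in
$$
   D(u) \leq C\, \|Au\|_{L^2(D,F)}
$$
\emph{independent of} $\varepsilon$. Here $C = \sqrt{1+c^2}$ where $c$ is the constant from \eqref{eq.well-posed}. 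This is the crucial uniformity: the operator $A$ (not $A^{\ast}A+\varepsilon$) controls the full graph norm.

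Next I would estimate $\|A u_\varepsilon(f)\|_{L^2(D,F)}$ uniformly in $\varepsilon$. Testing \eqref{eq.generalized.perturbed} (with $h=0$) against $v = u_\varepsilon(f)$ gives
$$
   \|A u_\varepsilon(f)\|^2_{L^2(D,F)} + \varepsilon\,\|u_\varepsilon(f)\|^2_{L^2(D,E)}
 = (f, A u_\varepsilon(f))_{L^2(D,F)}
 \leq \|f\|_{L^2(D,F)}\, \|A u_\varepsilon(f)\|_{L^2(D,F)},
$$
so $\|A u_\varepsilon(f)\|_{L^2(D,F)} \leq \|f\|_{L^2(D,F)}$, and hence by the uniform equivalence above $D(u_\varepsilon(f)) \leq C\,\|f\|_{L^2(D,F)}$ for all $\varepsilon>0$. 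Thus the family is bounded in $\cD_T$ regardless of whether $f$ lies in the closure of $A\,\cD_T$; weak subsequential limits exist. To upgrade to a genuine limit I would show $\{u_\varepsilon(f)\}$ is Cauchy as $\varepsilon\searrow 0$: for $0<\varepsilon<\delta$, subtract the two instances of \eqref{eq.generalized.perturbed} to get, for all $v\in\cD_T$,
$$
   (A(u_\varepsilon-u_\delta),Av)_{L^2(D,F)} + \varepsilon\,(u_\varepsilon-u_\delta,v)_{L^2(D,E)}
 = (\delta-\varepsilon)\,(u_\delta,v)_{L^2(D,E)}.
$$
Taking $v=u_\varepsilon-u_\delta$ and using $|\delta-\varepsilon|\,\|u_\delta\|_{L^2}\,\|u_\varepsilon-u_\delta\|_{L^2} \leq \delta\,\|u_\delta\|_{L^2}\,\|u_\varepsilon-u_\delta\|_{L^2}$ together with the uniform $L^2$-bound on $u_\delta$ and the uniform equivalence, one gets $\|A(u_\varepsilon-u_\delta)\|_{L^2(D,F)} \leq C'\sqrt{\delta}$, hence $D(u_\varepsilon-u_\delta)\to 0$ as $\varepsilon,\delta\to0+$. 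So the limit $u=\lim_{\varepsilon\to0+}u_\varepsilon(f)$ exists in $\cD_T$.

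Finally, for the identification of $u$ when $f$ is in the closure of $A\,\cD_T$: since $u_\varepsilon(f)\to u$ strongly in $\cD_T$, we may pass to the limit in \eqref{eq.generalized.perturbed}; the term $\varepsilon(u_\varepsilon(f),v)_{L^2}$ vanishes (its modulus is $\leq\varepsilon\,C\,\|f\|\,\|v\|_{L^2}$), giving $(Au,Av)_{L^2(D,F)}=(f,Av)_{L^2(D,F)}$ for all $v\in\cD_T$, i.e.\ \eqref{eq.generalized} holds. By Lemma~\ref{l.generalized}, since $f$ lies in the closure of $A\,\cD_T$, this means $Au=f$ in $D$, so $u$ solves Problem~\ref{pr.Cauchy.HD}. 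The main obstacle is really just the uniform-in-$\varepsilon$ bookkeeping: one must be careful that the constant bridging $D(\cdot)$ and $\|A\cdot\|_{L^2}$ comes only from \eqref{eq.well-posed} and not from the $\varepsilon$-dependent estimate \eqref{eq.norms}, otherwise the Cauchy argument collapses. Everything else is a routine application of Schwarz's inequality and the testing identity \eqref{eq.generalized.perturbed}.
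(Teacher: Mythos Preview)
Your argument is correct, but it takes a different route from the paper's. The paper first \emph{constructs} the limit: since \eqref{eq.well-posed} makes $(Au,Av)_{L^2(D,F)}$ an equivalent inner product on $\cD_T$, the Riesz theorem produces, for every $f\in L^2(D,F)$, a unique $u\in\cD_T$ satisfying \eqref{eq.generalized}. Then it uses the identity $u_\varepsilon(f)-u = u_\varepsilon(0,-u)$ from \eqref{eq.generalized.perturbed.2} together with Lemma~\ref{l.solvable.perturbed} to obtain the explicit rate $D(u_\varepsilon(f)-u)\leq\sqrt{c+1}\,\sqrt{\varepsilon}\,\|u\|_{L^2(D,E)}$; the identification of $u$ as a solution when $f$ lies in the closure of $A\,\cD_T$ then follows from Corollary~\ref{c.solvable.Cauchy}. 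You instead prove directly that $\{u_\varepsilon(f)\}$ is Cauchy in $\cD_T$, without knowing the limit beforehand, and only afterward identify $u$ by passing to the limit in \eqref{eq.generalized.perturbed} and invoking Lemma~\ref{l.generalized}. Your approach is more self-contained (no appeal to Riesz or to the machinery of \S\ref{s.Tmt}) and arguably more elementary; the paper's approach is shorter, reuses results already in hand, and yields a clean convergence rate in one line. A minor remark: in your Cauchy estimate you could sharpen $\sqrt{\delta}$ to $\delta$ by bounding $\|u_\varepsilon-u_\delta\|_{L^2}$ via \eqref{eq.well-posed} rather than via the uniform bound, but either suffices.
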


\begin{proof}
Indeed, it follows from (\ref{eq.well-posed}) that the Hermitian form
$$
h (u,v) := (Au, Av)_{L^2 (D,F)}
$$
defines a scalar product on $\cD_{T}$ inducing the same topology as the
original one.
We now use the Riesz representation theorem to see that for every
$f \in L^2 (D,F)$ there is a unique element $u \in \cD_{T}$ satisfying
(\ref{eq.generalized}).

Moreover, (\ref{eq.well-posed}) yields
$$
   D (u_\varepsilon (f) - u)
 \leq
   \sqrt{c+1}\, \| u_\varepsilon (f) - u \|_{\varepsilon}.
$$
Then using (\ref{eq.generalized.perturbed.2}) and
           Lemma \ref{l.solvable.perturbed}
we see that
\begin{eqnarray*}
   D (u_\varepsilon (f) - u)
 & \leq &
   \sqrt{c+1}\, \| u_\varepsilon (0,-u) \|_{\varepsilon}
                                                 \\
 & \leq &
   \sqrt{c+1}\, \sqrt{\varepsilon}\, \| u \|_{L^{2} (D,E)}.
\end{eqnarray*}
Therefore, we get
$$
   \lim_{\varepsilon \to 0+}
   D (u_\varepsilon (f) - u)
 = 0,
$$
and so Corollary \ref{c.solvable.Cauchy} shows that $u$ is a solution to
Problem \ref{pr.Cauchy.HD} provided $f$ belongs to the closure of $A\, \cD_{T}$
in $L^{2} (D,F)$.
\end{proof}

Apparently, if $A$ is a differential operator with finite-dimensional kernel
$\Sol_{A} (D)$ then the (Cauchy) Problem \ref{pr.Cauchy.HD} is well posed for
$A$.

\begin{exmp}
\label{e.ODE}
Let
   $X = \R$,
   $A = d / dx$,
   $D = (a,b)$ with $-\infty < a < b <\infty$,
and
   $\iG = \{ a \}$.
Then $\cD_{A} = H^1 (D)$.
The Cauchy problem
$$
\left\{ \begin{array}{rclcl}
        u' (x) & = & f (x) & \mbox{for} & x \in (a,b),
                                                 \\
        u (a)  & = & u_0,  &            &
        \end{array}
\right.
$$
with $u_0 \in \R$, is known to be well posed in Sobolev spaces as well as in
spaces of smooth functions on $[a,b]$.
Its solution can be easily found by the formula
$$
u (x) = u_0 + \int_a^x f (y)\, dy.
$$
Let us look at the corresponding family of mixed problems.
In this case we have
   $A^{\ast} = - d / dx$ and
   $\partial D \setminus \iG^{\circ} = \{ b \}$,
hence the mixed problems are
$$
\left\{ \begin{array}{rclcl}
          u_{\varepsilon}'' (x) - \varepsilon\, u_{\varepsilon} (x)
        & =
        & f' (x)
        & \mbox{for}
        & x \in (a,b),
                                                 \\
          u_{\varepsilon} (a)
        & =
        & u_0,
        &
        &
                                                 \\
          u_{\varepsilon}' (b)
        & =
        & f (b),
        &
        &
        \end{array}
\right.
$$
where $u_{0} \in \R$ is arbitrary.
One easily calculates that
$$
   u_\varepsilon (x)
 = u_0
 + \int\limits_a^x \! f (y) \cosh (\sqrt{\varepsilon} (x-y))\, dy
 + \frac{\sinh (\sqrt{\varepsilon} (x-a))}{\cosh (\sqrt{\varepsilon} (b-a))}
   \int\limits_a^b \! f (y) \sinh (\sqrt{\varepsilon} (b-y))\, dy
$$
and
$$
\lim _{\varepsilon \to 0+} u_\varepsilon = u
$$
even in the norm of $C^1 [a,b]$, if $f \in C [a,b]$.
\end{exmp}

\section{Finding the solution}
\label{s.7}
\setcounter{equation}{0}

Let us discuss the very important question of how to find the solution of
Problem \ref{pr.Cauchy.perturbed}, and hence a solution to
Problem \ref{pr.Cauchy.HD}.
Of course, if an explicit orthonormal basis
   $\{ e_i \}_{i \in \N}$
in the space $\cD_{T}$ with the scalar product $(\cdot, \cdot)_\varepsilon$
is available, then one easily obtains
\begin{equation}
\label{eq.sol.mixed}
   u_\varepsilon (f,h)
 = \sum_{j=1}^\infty
   (u_\varepsilon (f,h), e_i)_{\varepsilon}\,
   e_i.
\end{equation}
According to (\ref{eq.generalized.perturbed}) we have
\begin{equation}
\label{eq.sol.coeff}
   (u_\varepsilon (f,h), e_i)_{\varepsilon}
 = (f, A e_i)_{L^2 (D,F)} + \varepsilon\, (h, e_i)_{L^2 (D,E)},
\end{equation}
hence (\ref{eq.sol.mixed}) and
      (\ref{eq.sol.coeff})
give us a complete description of the solution $u_\varepsilon (f,h)$ to
Problem \ref{pr.Cauchy.perturbed}.
Unfortunately, it is not an easy task to construct an explicit basis
   $\{ e_i \}_{i \in \N}$.

\begin{exmp}
\label{e.as}
Let
   $\iG = \partial D \cap S$
where $S$ is a sufficiently smooth hypersurface near $\partial D$.
Choose a defining function $\delta (x)$ for $S$.
Then we can start with a linearly independent system of the form
   $\{ (\delta (x))^{m-1} P_i (x) \}$
in $\cD_{T}$, where
   $P_{i} (x)$ are polynomials of increasing degree taking their values in
   $E_{x}$.
Orthogonalising it by the standard Gram-Schmidt procedure we arrive at an
orthonormal system in $\cD_{T}$.
In order to obtain a basis we have certainly to guarantee that the system
   $\{ (\delta (x))^{m-1} P_i (x) \}$
be dense in $\cD_{T}$.
However, for applications it suffices to have merely a finite number of basis
elements.
\end{exmp}

Let us describe an alternative way of finding the solution.
Assume that the operator $A^{\ast} A + \varepsilon$ possesses the Unique
Continuation property $(U)_{s}$ in a neighbourhood of $\overline{D}$.
Then it has a two-sided fundamental solution there
   (see for instance \cite{Tark35}).
Fix such a fundamental solution $\iPhi_\varepsilon (x,y)$ for
   $A^{\ast} A + \varepsilon$.
For each $s \geq 0$, it induces a continuous linear map
$
\iPhi_\varepsilon : H^{s} (D,E) \to H^{s+2m} (D,E)
$
by
$
u \mapsto r_{+}\, \iPhi_\varepsilon (e_{+} u)
$
where
   $e_{+}$ means the extension by zero to all of $X$ and
   $r_{+}$ the restriction to $D$.
This map actually extends to a continuous map
$
\iPhi_\varepsilon : H^{s} (D,E) \to H^{s+2m} (D,E)
$
for all $s \in \R$, being a right inverse of $A^{\ast} A + \varepsilon$.
Every element $u \in \cD_{A}$ may be thus written in the form
\begin{equation}
\label{eq.mixed.1}
u = U + \iPhi_\varepsilon ((A^{\ast} A + \varepsilon) u),
\end{equation}
where
   $U \in \cD_{A} \cap \Sol_{A^{\ast} A + \varepsilon} (D)$.
Indeed, fix $u \in \cD_{A}$.
Since $Au \in L^2 (D,F)$ we deduce that $A^{\ast} A u \in H^{-m} (D,E)$.
It follows that
\begin{eqnarray*}
   \iPhi_\varepsilon  ((A^{\ast} A + \varepsilon) u)
 & \in &
   H^m (D,E)
                                                 \\
 & \subset &
   \cD_{A}.
\end{eqnarray*}
Setting
   $U = u - \iPhi_\varepsilon  ((A^{\ast} A + \varepsilon) u)$
yields readily (\ref{eq.mixed.1}) with
   $U \in \cD_{A} \cap \Sol_{A^{\ast} A + \varepsilon} (D)$,
as desired.

In practice one usually has only a complete linearly independent system
   $\{ U_i \}_{i \in \N}$
of solutions to
   $(A^{\ast} A + \varepsilon) U = 0$
on neighborhoods of $\overline{D}$, or even on all of $X^{\circ}$.

\begin{lem}
\label{l.dense.HDA}
Assume that
   $A^{\ast} A + \varepsilon$
possesses the Unique Continuation Property $(U)_{s}$.
If
   $M \subset \Sol_{A^{\ast} A + \varepsilon} (\overline{D})$
is a dense set in
   $C^{m-1} (\overline{D}, E) \cap \Sol_{A^{\ast} A + \varepsilon} (D)$
then it is dense in
   $\cD_{A} \cap \Sol_{A^{\ast} A + \varepsilon} (D)$.
 \end{lem}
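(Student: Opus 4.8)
The plan is to derive the assertion from the classical Runge-type approximation theorem for $A^{\ast} A + \varepsilon$ together with an energy identity on the space of solutions. Write $L = A^{\ast} A + \varepsilon$ and $H_{0} = \cD_{A} \cap \Sol_{L} (D)$. This is a closed subspace of $\cD_{A}$: if $g_{k} \to g$ in $\cD_{A}$ then $g_{k} \to g$ and $A g_{k} \to A g$ in $L^{2}$, so $(A g, A \varphi)_{L^{2} (D,F)} + \varepsilon (g, \varphi)_{L^{2} (D,E)} = \lim_{k} \big( (A g_{k}, A \varphi)_{L^{2} (D,F)} + \varepsilon (g_{k}, \varphi)_{L^{2} (D,E)} \big) = 0$ for every $\varphi \in C^{\infty}_{\comp} (D,E)$, i.e.\ $L g = 0$ in $D$. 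Moreover $M \subset H_{0}$, since any $v \in \Sol_{L} (\overline{D})$ is, by interior ellipticity of $L$, smooth on a neighbourhood of $\overline{D}$, hence in $C^{\infty} (\overline{D}, E) \subset H^{m} (D,E) \subset \cD_{A}$, and solves $L v = 0$ in $D$. So, by the Hahn--Banach theorem, it suffices to show that the only $w \in H_{0}$ with $D (v, w) = 0$ for all $v \in M$ is $w = 0$; equivalently, that the $D(\cdot,\cdot)$-closure of $M$ in $H_{0}$ is all of $H_{0}$.

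The main computation is the following. Let $g \in H_{0}$. Then $A g \in L^{2} (D,F)$ and $A^{\ast} (A g) = A^{\ast} A g = - \varepsilon\, g \in L^{2} (D,E)$, so $n (A g)$ has weak boundary values on $\partial D$ (the analogue of Lemma \ref{l.HDA} for $A^{\ast}$), while $t (g)$ has weak boundary values by Lemma \ref{l.HDA}. Applying the Green formula for $A$ to the pair $(g, A g)$, first over the exhausting subdomains $D_{-\delta}$ of Lemma \ref{l.HDA.t.weakly} (where $g$ is smooth), and then letting $\delta \to 0+$, gives
\begin{equation*}
   \| A g \|_{L^{2} (D,F)}^{2} + \varepsilon\, \| g \|_{L^{2} (D,E)}^{2}
 = \langle t (g), n (A g) \rangle_{\partial D} .
\end{equation*}
Since the weak-boundary-value map for $A^{\ast}$ is continuous, $n (A g)$ is bounded in the relevant negative-order Sobolev space on $\partial D$ by $C \big( \| A g \|_{L^{2} (D,F)} + \| A^{\ast} A g \|_{L^{2} (D,E)} \big) \le C'_{\varepsilon}\, D (g)$; the identity then yields $D (g) \le C''_{\varepsilon}\, \| t (g) \|_{\oplus H^{m-m_{j}-1/2} (\partial D, F_{j})}$ whenever $t (g)$ genuinely lies in that space — in particular for solutions that are smooth up to $\overline{D}$. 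Thus, on such solutions, the graph norm is dominated by the norm of the Dirichlet trace $t (\cdot)$.

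Now proceed in two steps. First, $M$ is $D(\cdot,\cdot)$-dense in $\Sol_{L} (\overline{D})$: given $v \in \Sol_{L} (\overline{D}) \subset C^{m-1} (\overline{D}, E) \cap \Sol_{L} (D)$, the hypothesis furnishes $m_{k} \in M$ with $m_{k} \to v$ in $C^{m-1} (\overline{D}, E)$; then $g_{k} := m_{k} - v \in H_{0}$ is smooth up to $\overline{D}$ and $t (g_{k}) \to 0$, so the estimate above gives $D (m_{k} - v) = D (g_{k}) \to 0$. Second — the substantive ingredient — $\Sol_{L} (\overline{D})$ is $D(\cdot,\cdot)$-dense in $H_{0}$; this is the Runge-type approximation theorem for $L$, which is available because $L$ has the Unique Continuation Property, and is itself proved by the Hahn--Banach device: if $w \in H_{0}$ is $D(\cdot,\cdot)$-orthogonal to all solutions defined near $\overline{D}$, then testing against the columns $\iPhi_{\varepsilon} (\cdot, y)$ of a two-sided fundamental solution of $L$ with pole $y \notin \overline{D}$ shows that a $\iPhi_{\varepsilon}$-potential of $w$ vanishes identically on $X \setminus \overline{D}$, whence $w = 0$ by the reproducing property of $\iPhi_{\varepsilon}$ and $(U)_{s}$ (cf.\ \cite{Tark35}). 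Combining the two steps, the $D(\cdot,\cdot)$-closure of $M$ contains that of $\Sol_{L} (\overline{D})$, namely all of $H_{0}$, and the proof is complete.

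The delicate point is the boundary bookkeeping in the first step: a priori $t (g_{k}) = \oplus_{j} B_{j} g_{k}$ converges only in $\oplus_{j} C^{m-1-m_{j}} (\partial D, F_{j})$, which is half a derivative weaker than the space $\oplus_{j} H^{m-m_{j}-1/2} (\partial D, F_{j})$ needed to pair it against $n (A g_{k})$. Closing this gap requires using that the $g_{k}$ are solutions smooth up to $\overline{D}$ — so that $n (A g_{k})$ is in turn smooth and the pairing can be estimated in weaker norms — together with the trace conventions adopted for $\cD_{A}$. The second step, the Runge theorem for $L$, is the deeper ingredient, but is standard once $(U)_{s}$ is known.
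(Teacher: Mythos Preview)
Your two--step strategy does not go through, and the failure is in Step~1, precisely at the ``delicate point'' you flag but do not resolve.  The energy identity
\[
   \| A g \|_{L^{2}}^{2} + \varepsilon\, \| g \|_{L^{2}}^{2}
 \;=\; \langle t(g),\, n(Ag) \rangle_{\partial D}
\]
is correct for solutions $g$ of $L g = 0$, and together with the continuity of $n\circ A : \cD_{A^{\ast}} \to \oplus H^{-(m-1-m_{j})-1/2}(\partial D)$ it gives $D(g) \le C\, \| t(g) \|_{\oplus H^{m-m_{j}-1/2}}$.  But $C^{m-1}$\nobreakdash-convergence of $g_{k} = m_{k} - v$ only yields $t(g_{k}) \to 0$ in $\oplus C^{m-1-m_{j}}$, which is half a derivative short.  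Your proposed fix, that $n(Ag_{k})$ is smooth and so ``the pairing can be estimated in weaker norms'', would require a \emph{uniform} bound on $n(Ag_{k})$ in a norm stronger than the one the trace estimate provides, and you have no such bound.  In fact the implication ``$C^{m-1}$\nobreakdash-convergence of solutions $\Rightarrow$ graph-norm convergence'' is simply false: for $m=1$ and $L = -\Delta + \varepsilon$ on the unit disk, the solutions $h_{k}$ with boundary data $k^{-1/2} e^{i k \theta}$ satisfy $\| h_{k} \|_{C^{0}} \to 0$ (maximum principle) while $\| h_{k} \|_{H^{1}} \asymp \| k^{-1/2} e^{ik\theta} \|_{H^{1/2}} \asymp 1$.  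So the particular approximating sequence handed to you by the hypothesis need not converge in $D(\cdot,\cdot)$, and your argument for Step~1 collapses.  Step~2 is also only sketched: the Runge theorem you invoke gives approximation on compact subsets of $D$, not in the graph norm up to $\partial D$, and the fundamental-solution test you outline does not obviously produce a potential whose vanishing forces $w=0$ without further topological assumptions on $X\setminus\overline{D}$ that are absent from the lemma.

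The paper's argument avoids this trap entirely by working \emph{dually}.  Rather than upgrading $C^{m-1}$\nobreakdash-convergence to graph-norm convergence, it takes $u \in H_{0}$ orthogonal to $M$ with respect to $(\cdot,\cdot)_{\varepsilon}$ and uses the very same boundary identity to write $0 = (u,v)_{\varepsilon} = \langle n(Au),\, t(v) \rangle_{\partial D}$ for all $v \in M$.  Now the $C^{m-1}$\nobreakdash-density of $M$, combined with solvability of the Dirichlet problem for $L$, shows that $\{ t(v) : v \in M \}$ is rich enough to force the weak boundary value $n(Au)$ to vanish on $\partial D$.  Once $n(Au)=0$, approximating $u$ in $\cD_{A}$ by smooth $u_{k}$ gives $(u,u)_{\varepsilon} = \lim_{k} \langle n(Au), t(u_{k}) \rangle = 0$, hence $u=0$.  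The point is that the hypothesis is used to kill a boundary \emph{distribution}, not to manufacture a graph-norm Cauchy sequence; this is why the weaker $C^{m-1}$ topology suffices.
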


\begin{proof}
When endowed with the scalar product $(\cdot,\cdot)_{\varepsilon}$,
   $\cD_{A} \cap \Sol_{A^{\ast} A + \varepsilon} (D)$
is a Hilbert space.
Hence it suffices to prove that the orthogonal complement of $M$ in this space
is zero.

To this end, pick $u \in \cD_{A} \cap \Sol_{A^{\ast} A + \varepsilon} (D)$.
Since $u$ belongs to $L^2 (D,E)$ it has a finite order of growth near
$\partial D$,
   cf. \cite{Tark36}.
It follows that the expressions $t (u)$ and
                                $n (Au)$
have weak boundary values $u_0$ and
                          $u_1$
in the space of distributions on $\partial D$.

Let
   $v_{0} \in \oplus_{j=0}^{m-1} C^\infty (\partial D, F_j)$.
As $t$ is a Dirichlet system of order $m-1$ on $\partial D$, there is a section
$v \in C^\infty (\overline{D}, E)$ satisfying $t (v) = v_{0}$.
Then
$$
   <u_1, v_{0}>
 =:
   \lim_{\delta \to 0-}
   \int_{\partial D_{\delta}}
   \left( n (Au), v \right)_x\,  ds_\delta (x)
$$
and the definition does not depend on the particular choice of $v$.
Since the Dirichlet problem for $A^{\ast} A + \varepsilon$ in $D$ is uniquely
solvable over the whole scale of Sobolev spaces, we can take
   $v \in C^{\infty} (\overline{D}, E) \cap
          \Sol_{A^{\ast} A + \varepsilon} (D)$.

If $u$ is orthogonal to
   $M \subset \Sol_{A^{\ast} A + \varepsilon} (\overline D)$
with respect to the scalar product $(\cdot, \cdot)_{\varepsilon}$
   then
\begin{eqnarray*}
   0
 & = &
   (u, v)_\varepsilon
                                                 \\
 & = &
   \lim_{\delta \to 0-}
   \int_{D_{\delta}} (Au, Av)_x \, dx
 + \varepsilon\, (u, v)_{L^2 (D,E)}
                                                 \\
 & = &
   \lim_{\delta \to 0-}
   \Big( \int_{\partial D_{\delta}} (n (Au), t (v))_x\, d s_\delta (x)
       + \int_{D_{\delta}} (A^{\ast} A u, v)_x\, dx
   \Big)
 + \varepsilon\, (u, v)_{L^2 (D,E)}
                                                 \\
 & = &
   \lim_{\delta \to 0-}
   \int_{\partial D_{\delta}}
   (n (Au), t (v))_x\, ds_\delta (x)
\end{eqnarray*}
for all $v \in M$.
As $M$ is dense in
   $C^{m-1} (\overline{D}, E) \cap \Sol_{A^{\ast} A + \varepsilon} (D)$
it follows that $n (Au) =0$ on $\partial D$.

On the other hand, since $u \in \cD_{A}$ it can be approximated in the norm
$D (\cdot)$ by a sequence $\{ u_k \} \subset C^{\infty} (\overline{D}, E)$.
Then
\begin{eqnarray*}
   (u,u)_\varepsilon
 & = &
   \lim_{k \to \infty}
   (u,u_{k})_\varepsilon
                                                 \\
 & = &
   \lim_{k \to \infty}
   \lim_{\delta \to 0-}
   \int_{\partial D_{\delta}}
   (n (Au), u_{k})_x\, ds_\delta (x)
                                                 \\
 & = &
   0
\end{eqnarray*}
whence $u \equiv 0$ in $D$.
\end{proof}

For $M = \Sol_{A^{\ast} A + \varepsilon} (X^{\circ})$, the hypothesis of
Lemma \ref{l.dense.HDA} is not too restrictive.
It is fulfilled, e.g., if the complement of $D$ has no compact components in
$X^{\circ}$,
   see \cite{Tark35}.
In particular, this is the case if $\partial D$ is connected.

Applying to
   $\{ U_i \}_{i \in \N}$
the Gram-Schmidt orthogonalisation procedure with respect to the scalar
product $(\cdot, \cdot)_\varepsilon$, we obtain an orthonormal basis
   $\{ b_i = b_i (\varepsilon) \}_{i \in \N}$
in $\cD_{A} \cap \Sol_{A^{\ast} A + \varepsilon} (D)$.

The equality (\ref{eq.mixed.1}) suggest us to look for solutions to mixed
Problem \ref{pr.Cauchy.perturbed} of the form
\begin{equation}
\label{eq.mixed.2}
   u_\varepsilon (f,h)
 = \iPhi_\varepsilon (A^{\ast} f + \varepsilon h)
 + \sum_{i=1}^\infty c_i (\varepsilon) b_i (\varepsilon)
\end{equation}
where the series on the right-hand side converges in $\cD_{A}$.
The point is to find the coefficients $c_i (\varepsilon)$ through $f$ and
                                                                  $h$.
For this purpose, we denote by
   $\iPi_{\iG,\varepsilon}$
the orthogonal projection
$$
   \cD_{A} \cap \Sol_{A^{\ast} A + \varepsilon} (D)
 \to
   \cD_{T} \cap \Sol_{A^{\ast} A + \varepsilon} (D)
$$
with respect to the scalar product $(\cdot, \cdot)_{\varepsilon}$.

\begin{lem}
\label{l.mixed.2}
Each solution
   $u_\varepsilon (f,h) \in \cD_{T}$
of Problem \ref{pr.Cauchy.perturbed} may be written as the series
(\ref{eq.mixed.2}) where
$$
   c_i (\varepsilon)
 = (f, A \iPi_{\iG,\varepsilon} b_i)_{L^2 (D,F)}
 + \varepsilon\, (h, \iPi_{\iG,\varepsilon} b_i)_{L^2 (D,E)}
 - (\iPhi_\varepsilon  (A^{\ast} f + \varepsilon h), b_i)_\varepsilon.
$$
\end{lem}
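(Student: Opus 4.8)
The plan is to apply the interior decomposition (\ref{eq.mixed.1}) to the particular solution $u=u_{\varepsilon}(f,h)$, to expand the resulting homogeneous part in the orthonormal basis $\{b_{i}(\varepsilon)\}$, and then to evaluate the Fourier coefficients with the help of the projection $\iPi_{\iG,\varepsilon}$. The one genuinely delicate point is an orthogonality statement about $b_{i}-\iPi_{\iG,\varepsilon}b_{i}$; everything else is a direct use of the identities already at our disposal.

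I would begin by checking that $(A^{\ast}A+\varepsilon)\,u_{\varepsilon}(f,h)=A^{\ast}f+\varepsilon\,h$ in $H^{-m}(D,E)$: this follows by writing out (\ref{eq.generalized.perturbed}) with test sections $v$ taken from $\i{H}{}^{m}(D,E)\subset\cD_{T}$ and integrating by parts, recalling that $A^{\ast}$ was extended to a map $L^{2}(D,F)\to H^{-m}(D,E)$ precisely by $(A^{\ast}f,v)=(f,Av)_{L^{2}(D,F)}$. Consequently $\iPhi_{\varepsilon}\big((A^{\ast}A+\varepsilon)u_{\varepsilon}(f,h)\big)=\iPhi_{\varepsilon}(A^{\ast}f+\varepsilon h)$, so that (\ref{eq.mixed.1}) reads
\[
   u_{\varepsilon}(f,h)=U+\iPhi_{\varepsilon}(A^{\ast}f+\varepsilon h),\qquad U\in\cD_{A}\cap\Sol_{A^{\ast}A+\varepsilon}(D).
\]
Since $\{b_{i}(\varepsilon)\}$ is an orthonormal basis of the Hilbert space $\cD_{A}\cap\Sol_{A^{\ast}A+\varepsilon}(D)$ with $(\cdot,\cdot)_{\varepsilon}$, expanding $U=\sum_{i}(U,b_{i})_{\varepsilon}b_{i}$ immediately gives the representation (\ref{eq.mixed.2}) with $c_{i}(\varepsilon)=(u_{\varepsilon}(f,h),b_{i})_{\varepsilon}-(\iPhi_{\varepsilon}(A^{\ast}f+\varepsilon h),b_{i})_{\varepsilon}$, the series converging in $\cD_{A}\cap\Sol_{A^{\ast}A+\varepsilon}(D)$ and hence in $\cD_{A}$. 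So everything comes down to identifying the first scalar product.

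For this I would let $N$ denote the orthogonal complement of $\cD_{T}$ in $\cD_{A}$ with respect to $(\cdot,\cdot)_{\varepsilon}$. Testing the orthogonality against $C^{\infty}_{\comp}(D,E)\subset\cD_{T}$ shows $(A^{\ast}A+\varepsilon)v=0$ in $D$ for every $v\in N$, that is, $N\subset\cD_{A}\cap\Sol_{A^{\ast}A+\varepsilon}(D)$. If now $u\in\cD_{A}\cap\Sol_{A^{\ast}A+\varepsilon}(D)$ is written as $u=p+q$ with $p\in\cD_{T}$ and $q\in N$, then $p=u-q$ again solves $(A^{\ast}A+\varepsilon)p=0$, so $p\in\cD_{T}\cap\Sol_{A^{\ast}A+\varepsilon}(D)$; hence $\cD_{A}\cap\Sol_{A^{\ast}A+\varepsilon}(D)=\big(\cD_{T}\cap\Sol_{A^{\ast}A+\varepsilon}(D)\big)\oplus N$ orthogonally, and by uniqueness of orthogonal complements $N$ is exactly the kernel of $\iPi_{\iG,\varepsilon}$, i.e.\ the range of $I-\iPi_{\iG,\varepsilon}$. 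In particular $b_{i}-\iPi_{\iG,\varepsilon}b_{i}\in N$ is $(\cdot,\cdot)_{\varepsilon}$-orthogonal to $u_{\varepsilon}(f,h)\in\cD_{T}$, so $(u_{\varepsilon}(f,h),b_{i})_{\varepsilon}=(u_{\varepsilon}(f,h),\iPi_{\iG,\varepsilon}b_{i})_{\varepsilon}$; and since $\iPi_{\iG,\varepsilon}b_{i}\in\cD_{T}$, relation (\ref{eq.generalized.perturbed}) with $v=\iPi_{\iG,\varepsilon}b_{i}$ turns the right-hand side into $(f,A\iPi_{\iG,\varepsilon}b_{i})_{L^{2}(D,F)}+\varepsilon\,(h,\iPi_{\iG,\varepsilon}b_{i})_{L^{2}(D,E)}$. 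Substituting into the expression for $c_{i}(\varepsilon)$ yields the asserted formula.

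I expect the middle step to be the real obstacle: showing that $b_{i}-\iPi_{\iG,\varepsilon}b_{i}$ is $(\cdot,\cdot)_{\varepsilon}$-orthogonal to the whole of $\cD_{T}$, not merely to $\cD_{T}\cap\Sol_{A^{\ast}A+\varepsilon}(D)$, which is what the identification of $N$ achieves. A smaller technical point worth a line of care is that $(A^{\ast}A+\varepsilon)u_{\varepsilon}(f,h)$, a priori only a distribution on $D$, really represents the element $A^{\ast}f+\varepsilon h$ of $H^{-m}(D,E)$, so that $\iPhi_{\varepsilon}$ may legitimately be applied to it.
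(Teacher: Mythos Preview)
Your proof is correct and follows essentially the same route as the paper's: decompose $u_{\varepsilon}(f,h)$ via (\ref{eq.mixed.1}), expand the homogeneous part in the basis $\{b_i\}$, and reduce the computation of $(u_{\varepsilon}(f,h),b_i)_{\varepsilon}$ to the variational identity (\ref{eq.generalized.perturbed}) with $v=\iPi_{\iG,\varepsilon}b_i$. The only cosmetic difference is in the ``delicate point'': the paper introduces the full orthogonal projection $\tilde{\iPi}_{\iG,\varepsilon}:\cD_{A}\to\cD_{T}$, writes $(u_{\varepsilon},b_i)_{\varepsilon}=(u_{\varepsilon},\tilde{\iPi}_{\iG,\varepsilon}b_i)_{\varepsilon}$ by self-adjointness, and then checks that $\tilde{\iPi}_{\iG,\varepsilon}b_i\in\Sol_{A^{\ast}A+\varepsilon}(D)$ (hence $\tilde{\iPi}_{\iG,\varepsilon}b_i=\iPi_{\iG,\varepsilon}b_i$); you argue the dual way, showing that $\ker\iPi_{\iG,\varepsilon}$ coincides with $N=\cD_{T}^{\perp}\subset\Sol_{A^{\ast}A+\varepsilon}(D)$ so that $b_i-\iPi_{\iG,\varepsilon}b_i\perp\cD_{T}$. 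These are two phrasings of the same orthogonal decomposition, and the amount of work is identical.
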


\begin{proof}
Indeed, let
   $u_\varepsilon \in \cD_{T}$
be a solution of Problem \ref{pr.Cauchy.perturbed}.
As we have seen in \S \ref{s.Ap},
$$
   (A^{\ast} A + \varepsilon) u_\varepsilon
 = A^{\ast} f + \varepsilon h
$$
in $D$.
Using (\ref{eq.mixed.1}) we easily arrive at (\ref{eq.mixed.2}) with some
uniquely defined coefficients $c_i (\varepsilon)$.

Write
   $\tilde{\iPi}_{\iG,\varepsilon}$
for the orthogonal projection
   $\cD_{A} \to \cD_{T}$
with respect to $(\cdot, \cdot)_{\varepsilon}$.
Since $\tilde{\iPi}_{\iG,\varepsilon}$ is self-adjoint in $\cD_{A}$, we get
\begin{eqnarray}
\label{eq.Pi.e1}
   (u_\varepsilon, b_i)_\varepsilon
 & = &
   (\tilde{\iPi}_{\iG,\varepsilon} u_\varepsilon, b_i)_\varepsilon
                                                 \nonumber
                                                 \\
 & = &
   (u_\varepsilon, \tilde{\iPi}_{\iG,\varepsilon} b_i)_\varepsilon
                                                 \\
 & = &
   (f, A \tilde{\iPi}_{\iG,\varepsilon} b_i)_{L^2 (D,F)}
 + \varepsilon\, (h, \tilde{\iPi}_{\iG,\varepsilon} b_i)_{L^2 (D,E)},
                                                 \nonumber
\end{eqnarray}
the last equality being a consequence of (\ref{eq.generalized.perturbed}).

Now (\ref{eq.mixed.2}) implies
$$
   (u_\varepsilon, b_i)_\varepsilon
 = (\iPhi_\varepsilon  (A^{\ast} f + \varepsilon h), b_i)_\varepsilon
 + c_i (\varepsilon).
$$
Combining this with (\ref{eq.Pi.e1}) yields
$$
   c_i (\varepsilon)
 = (f, A \tilde{\iPi}_{\iG,\varepsilon} b_i)_{L^2 (D,F)}
 + \varepsilon\, (h, \tilde{\iPi}_{\iG,\varepsilon} b_i)_{L^2 (D,E)}
 - (\iPhi_\varepsilon (A^{\ast} f + \varepsilon h), b_i)_\varepsilon.
$$
Finally, for every $v \in C^{\infty}_{\comp} (D,E)$ we get
\begin{eqnarray*}
   (\tilde{\iPi}_{\iG,\varepsilon} b_i, (A^{\ast} A + \varepsilon) v
   )_{L^2 (D,E)}
 & = &
   (A \tilde{\iPi}_{\iG,\varepsilon} b_i, Av)_{L^2 (D,F)}
 + \varepsilon\, (\tilde{\iPi}_{\iG,\varepsilon} b_i, v)_{L^2 (D,E)}
                                                 \\
 & = &
   (b_i, \tilde{\iPi}_{\iG,\varepsilon} v)_{\varepsilon}
                                                 \\
 & = &
   (b_i, v)_{\varepsilon}
                                                 \\
 & = &
   ((A^{\ast} A + \varepsilon) b_i, v)_{L^2 (D,E)}
                                                 \\
 & = &
   0.
\end{eqnarray*}
This means that
   $\tilde{\iPi}_{\iG,\varepsilon} b_i$
belongs to
   $\cD_{T} \cap \Sol_{A^{\ast} A + \varepsilon} (D)$
whence
   $\tilde{\iPi}_{\iG,\varepsilon} b_i = \iPi_{\iG,\varepsilon} b_i$,
showing the lemma.
\end{proof}

We have thus derived expressions for the coefficients $c_i (\varepsilon)$
through $f$ and $h$.
However, it is not an easy task to explicitly construct the family of
projections $\{ \iPi_{\iG,\varepsilon} \}$.

\begin{lem}
\label{l.proj}
For every
   $u \in \cD_{A} \cap \Sol_{A^{\ast} A + \varepsilon} (D)$,
the projection $\iPi_{\iG,\varepsilon} u$ just amounts to  the solution of
Problem \ref{pr.Cauchy.perturbed} with $f = Au$ and
                                       $h = u$ .
\end{lem}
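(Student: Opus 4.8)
The plan is to observe that, upon inserting the particular data $f = Au$ and $h = u$ into the defining relation (\ref{eq.generalized.perturbed}), its right-hand side becomes
$$
(Au, Av)_{L^2 (D,F)} + \varepsilon\, (u,v)_{L^2 (D,E)} = (u,v)_{\varepsilon},
$$
so that Problem \ref{pr.Cauchy.perturbed} with this data asks precisely for an element $u_{\varepsilon} \in \cD_{T}$ satisfying $(u_{\varepsilon}, v)_{\varepsilon} = (u,v)_{\varepsilon}$ for all $v \in \cD_{T}$. By Lemma \ref{l.solvable.perturbed} (applicable since $u \in \cD_{A}$ forces $Au \in L^{2} (D,F)$ and $u \in L^{2} (D,E)$) such a $u_{\varepsilon} = u_{\varepsilon} (Au, u)$ exists and is unique, and the displayed identity says exactly that $u - u_{\varepsilon}$ is $(\cdot,\cdot)_{\varepsilon}$-orthogonal to $\cD_{T}$.

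Next I would verify that $u_{\varepsilon}$ remains in the solution space, i.e. $u_{\varepsilon} \in \Sol_{A^{\ast} A + \varepsilon} (D)$. Since $C^{\infty}_{\comp} (D,E) \subset \cD_{T}$, testing (\ref{eq.generalized.perturbed}) against $v \in C^{\infty}_{\comp} (D,E)$ gives, in the sense of distributions,
$$
(A^{\ast} A + \varepsilon)\, u_{\varepsilon} = (A^{\ast} A + \varepsilon)\, u = 0 \quad \mbox{in}\ D,
$$
the last equality holding because $u \in \Sol_{A^{\ast} A + \varepsilon} (D)$ by hypothesis; this is exactly the computation carried out at the end of the proof of Lemma \ref{l.mixed.2}. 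Hence $u_{\varepsilon} \in \cD_{T} \cap \Sol_{A^{\ast} A + \varepsilon} (D)$.

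To finish I would appeal to the characterisation of the orthogonal projection: $\cD_{T} \cap \Sol_{A^{\ast} A + \varepsilon} (D)$ is a closed subspace of $\cD_{A} \cap \Sol_{A^{\ast} A + \varepsilon} (D)$ with respect to $(\cdot,\cdot)_{\varepsilon}$, and we have just shown that $u_{\varepsilon}$ lies in it while $u - u_{\varepsilon} \perp \cD_{T} \supseteq \cD_{T} \cap \Sol_{A^{\ast} A + \varepsilon} (D)$; therefore $u_{\varepsilon}$ is by definition $\iPi_{\iG,\varepsilon} u$, which is the assertion. I do not anticipate a genuine obstacle here: the entire content is the collapse of the right-hand side to $(u,v)_{\varepsilon}$ together with the elementary distributional remark that the perturbed solution inherits membership in $\Sol_{A^{\ast} A + \varepsilon} (D)$ from $u$; the only step requiring a moment's care is checking the hypotheses of Lemma \ref{l.solvable.perturbed} for the data $(Au, u)$.
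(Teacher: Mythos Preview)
Your proposal is correct and follows essentially the same route as the paper: both arguments show that $u_{\varepsilon}(Au,u)$ lies in $\cD_{T} \cap \Sol_{A^{\ast} A + \varepsilon}(D)$ and that $u - u_{\varepsilon}$ is $(\cdot,\cdot)_{\varepsilon}$-orthogonal to all of $\cD_{T}$, then conclude $u_{\varepsilon} = \iPi_{\iG,\varepsilon} u$. The only cosmetic difference is that you invoke the standard characterisation of the orthogonal projection directly, whereas the paper writes out $\|\iPi_{\iG,\varepsilon} u - u_{\varepsilon}\|_{\varepsilon}^{2}$ explicitly and checks that both cross terms vanish; the content is identical.
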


\begin{proof}
By the very definition,
$
   \iPi_{\iG,\varepsilon} u
 \in
   \cD_{T} \cap \Sol_{A^{\ast} A + \varepsilon} (D)
$
and
$$
   (u - \iPi_{\iG,\varepsilon} u, v)_{\varepsilon}
 = 0
$$
for all $v \in \cD_{T}$ satisfying $(A^{\ast} A + \varepsilon) v = 0$ in $D$.

Further, the solution
   $u_\varepsilon = u_\varepsilon (Au,u)$
of Problem \ref{pr.Cauchy.perturbed} with $f = Au$ and
                                          $h = u$
belongs to $\cD_{T} \cap \Sol_{A^{\ast} A + \varepsilon} (D)$ because
$
   A^{\ast} f  + \varepsilon h
 = (A^{\ast} A + \varepsilon) u
 = 0.
$
Moreover, (\ref{eq.mixed.perturbed}) gives
$$
   (u - u_{\varepsilon}, v)_{\varepsilon}
 = 0
$$
for all $v \in \cD_{T}$.

We wish to show that
   $\iPi_{\iG,\varepsilon} u = u_\varepsilon$,
which is equivalent to
$
\| \iPi_{\iG,\varepsilon} u - u_\varepsilon \|_{\varepsilon} = 0.
$
To this end, write
\begin{eqnarray*}
   (\iPi_{\iG,\varepsilon} u - u_\varepsilon,
    \iPi_{\iG,\varepsilon} u - u_\varepsilon)_{\varepsilon}
 \! & \! = \! & \!
   -\,
   ((u - \iPi_{\iG,\varepsilon} u) - (u - u_\varepsilon),
    \iPi_{\iG,\varepsilon} u - u_\varepsilon)_{\varepsilon}
                                                 \\
 \! & \! = \! & \!
   -\,
   (u - \iPi_{\iG,\varepsilon} u,
    \iPi_{\iG,\varepsilon} u - u_\varepsilon)_{\varepsilon}
   -
   (u - u_\varepsilon,
    \iPi_{\iG,\varepsilon} u - u_\varepsilon)_{\varepsilon}.
\end{eqnarray*}
By the above, both summands on the right-hand side vanish because
   $\iPi_{\iG,\varepsilon} u - u_\varepsilon$
belongs to
   $\cD_{T} \cap \Sol_{A^{\ast} A + \varepsilon} (D)$.
\end{proof}

Of course, the lemma does not allow one to effectively determine the Fourier
coefficients $c_i$.
On the one hand, to find $c_i$ we only need to know
   $\iPi_{\iG,\varepsilon} b_i$.
On the other hand, this requires, by Lemma \ref{l.proj}, a solution of
   Problem \ref{pr.Cauchy.perturbed}
with very special data $f$ and $h$.

Let us now describe how to find solutions to Problem \ref{pr.Cauchy.perturbed}
for ``good'' data.
For this purpose we introduce for $s \geq 2m$ the Hermitian form
$$
h (u,v)
 = (t (u), t (v))_{\oplus
                   H^{s-m_{j}-1/2} (\iG, F_j)}
 + (n (Au), n (Av))_{\oplus
                     H^{s-m-m_{j}-1/2} (\partial D \setminus \iG^{\circ}, F_j)}
$$
on the space $H$ of all
   $u \in \cD_{A} \cap \Sol_{A^{\ast} A + \varepsilon} (D)$
with the property that
$$
\begin{array}{rcl}
   t (u)
 & \in
 & \oplus_{j=0}^{m-1}
   H^{s-m_{j}-1/2} (\iG, F_j),
                                                 \\
   n (Au)
 & \in
 & \oplus_{j=0}^{m-1}
   H^{s-m-m_{j}-1/2} (\partial D \setminus \iG^{\circ}, F_j),
\end{array}
$$
the expressions $t (u)$ and $n (Au)$ being understood in the sense of weak
boundary values.

\begin{lem}
\label{l.HGamma.Hilbert}
Suppose $s \geq 2m$.
When endowed with the scalar product $h (\cdot,\cdot)$, $H$ is a Hilbert
space.
\end{lem}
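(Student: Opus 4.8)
The plan is to realise $(H,h)$ as an isometric copy of an orthogonal sum of Sobolev spaces on the two boundary pieces $\iG$ and $\partial D \setminus \iG^{\circ}$, which is complete for trivial reasons. Set $\mathcal{B} = \bigl( \oplus_{j=0}^{m-1} H^{s-m_{j}-1/2} (\iG, F_{j}) \bigr) \oplus \bigl( \oplus_{j=0}^{m-1} H^{s-m-m_{j}-1/2} (\partial D \setminus \iG^{\circ}, F_{j}) \bigr)$, a Hilbert space, and let $\Lambda : H \to \mathcal{B}$ be the linear map $\Lambda u = (t (u), n (Au))$, the weak boundary values being restricted to $\iG$ and to $\partial D \setminus \iG^{\circ}$ respectively. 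By the very definition of $h$ we have $h (u,u) = \| \Lambda u \|_{\mathcal{B}}^{2}$ for $u \in H$, so it is enough to prove that $\Lambda$ is a bijection: then $h$ is positive definite, $\Lambda$ is an isometric isomorphism, and completeness of $H$ follows from that of $\mathcal{B}$.

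Injectivity of $\Lambda$ is the uniqueness argument already used in Theorem~\ref{t.mixed.good}. If $\Lambda u = 0$ then $t (u) = 0$ on $\iG$, hence $u \in \cD_{T}$ by Theorem~\ref{t.HD}, and $n (Au) = 0$ on $\partial D \setminus \iG^{\circ}$. Since $A^{\ast} (Au) = -\varepsilon u$ lies in $L^{2} (D,E)$, relation (\ref{eq.def.n}) applied with $g = Au$ gives $(Av, Au)_{L^{2} (D,F)} + \varepsilon\, (v,u)_{L^{2} (D,E)} = 0$, that is $(v,u)_{\varepsilon} = 0$, for every $v \in C^{\infty} (\overline{D}, E)$ with $t (v) = 0$ on $\iG$. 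As such $v$ are dense in $\cD_{T}$, this persists for all $v \in \cD_{T}$, and the choice $v = u$ forces $u = 0$.

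For surjectivity, fix $(g_{0}, g_{1}) \in \mathcal{B}$. First solve the Dirichlet problem for the positive operator $A^{\ast} A + \varepsilon$ — uniquely solvable over the Sobolev scale, cf. \S\ref{s.Ap} — with boundary datum some $G_{0} \in \oplus_{j} H^{s-m_{j}-1/2} (\partial D, F_{j})$ restricting to $g_{0}$ on $\iG$; this produces $U_{0} \in H^{s} (D,E) \subset \cD_{A}$ with $(A^{\ast} A + \varepsilon) U_{0} = 0$ in $D$ and $t (U_{0}) = g_{0}$ on $\iG$, and then $n (A U_{0})$ is a well-defined element of $\oplus_{j} H^{s-m-m_{j}-1/2} (\partial D, F_{j})$ (here $s \geq 2m$ enters). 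Since $n$ is a Dirichlet system of order $m-1$ on $\partial D$, pick $f \in H^{s-m} (D,F)$ with $n (f)$ restricting on $\partial D \setminus \iG^{\circ}$ to $g_{1} - n (A U_{0})$ (extend the latter to $\partial D$ and invert $n$); note $A^{\ast} f \in H^{s-2m} (D,E) \subset L^{2} (D,E)$ because $s \geq 2m$. Let $U_{1} \in \cD_{T}$ be the solution, furnished by Lemma~\ref{l.solvable.perturbed}, of Problem~\ref{pr.Cauchy.perturbed} with data $f$ and $-\varepsilon^{-1} A^{\ast} f \in L^{2} (D,E)$. By (\ref{eq.mixed.perturbed}) we have $(A^{\ast} A + \varepsilon) U_{1} = A^{\ast} f + \varepsilon (-\varepsilon^{-1} A^{\ast} f) = 0$ in $D$, $t (U_{1}) = 0$ on $\iG$ and $n (A U_{1}) = n (f)$ on $\partial D \setminus \iG^{\circ}$. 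Consequently $u := U_{0} + U_{1}$ lies in $\cD_{A}$, solves $(A^{\ast} A + \varepsilon) u = 0$ in $D$, and satisfies $t (u) = g_{0}$ on $\iG$ and $n (Au) = n (A U_{0}) + n (f) = g_{1}$ on $\partial D \setminus \iG^{\circ}$; thus $u \in H$ and $\Lambda u = (g_{0}, g_{1})$, proving $\Lambda$ onto.

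I expect the surjectivity step to be the only genuine obstacle: the boundary operator $n \circ A$ on $\partial D \setminus \iG^{\circ}$ is not coercive, so one cannot invoke the standard theory of elliptic boundary value problems, and the two-step device above — a genuine Dirichlet problem for $A^{\ast} A + \varepsilon$ followed by a correction coming from the well-posed perturbed Problem~\ref{pr.Cauchy.perturbed}, in the spirit of Theorem~\ref{t.mixed.good} — is what makes it work. A subsidiary, purely technical point, to be dispatched via the weak-boundary-value and local regularity results already used in Lemma~\ref{l.HDA} and \S\ref{s.Ap}, is the verification that the traces of the constructed $u$ genuinely lie in the higher-order spaces comprising $\mathcal{B}$ and that $t (u)$ agrees with $g_{0}$ on all of $\iG$, not merely on $\iG^{\circ}$.
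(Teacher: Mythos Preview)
Your argument is correct, but the paper proceeds by a much shorter and more direct route. Rather than proving that $\Lambda$ is a bijection onto $\mathcal{B}$, the paper simply takes a Cauchy sequence $\{u_k\}$ in $(H,h)$ and uses the a~priori estimate (\ref{eq.mixed.good.estimate}) of Theorem~\ref{t.mixed.good} (with $w=0$) to conclude that $\{u_k\}$ is Cauchy in $\cD_A \cap \Sol_{A^\ast A + \varepsilon}(D)$, hence has a limit $u$ there; since the boundary traces $t(u_k)$, $n(Au_k)$ converge both weakly on $\partial D$ (to $t(u)$, $n(Au)$) and in the strong Sobolev norms on $\iG$, $\partial D\setminus\iG^\circ$ (to some $u_0$, $u_1$), uniqueness of limits places $u$ in $H$. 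This bypasses surjectivity entirely: one only needs that the range of $\Lambda$ is closed, and the estimate gives this for free. Your approach, by contrast, essentially reproves the existence half of Theorem~\ref{t.mixed.good} (indeed, you could simply invoke that theorem with $w=0$ for the surjectivity step rather than redoing the two-step construction), and in return yields the extra information that $(H,h)$ is isometrically isomorphic to the full boundary space $\mathcal{B}$ --- a stronger statement than is needed for the lemma, though one that the paper implicitly uses just afterwards in Theorem~\ref{c.mixed.good}.
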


\begin{proof}
Indeed, (\ref{eq.mixed.good.estimate}) implies that $h (\cdot,\cdot)$ is a
scalar product on $H$.
Moreover, if $\{ u_k \}$ is a Cauchy sequence in $H$ then it a Cauchy sequence
in $\cD_{A} \cap \Sol_{A^{\ast} A + \varepsilon} (D)$.
Since this latter is a Hilbert space, $\{ u_k \}$ has a limit $u$ in this
space.
Moreover, both $\{ t (u_{k}) \}$ and
               $\{ n (Au_{k}) \}$
converge to $t (u)$ and
            $n (Au)$
in the space of distributions on $\partial D$, or, more precisely, in
   $\oplus_{j=0}^{m-1} H^{-m_{j}-1/2} (\partial D, F_j)$ and
   $\oplus_{j=0}^{m-1} H^{-m-m_{j}-1/2} (\partial D, F_j)$,
respectively.
By assumption,
   $\{ t (u_k) \}$ and
   $\{ n(Au_k) \}$
are Cauchy sequences in the Hilbert spaces
   $\oplus_{j=0}^{m-1} H^{s-m_{j}-1/2} (\iG, F_j)$ and
   $\oplus_{j=0}^{m-1}
    H^{s-m-m_{j}-1/2} (\partial D \setminus \iG^{\circ}, F_j)$,
respectively.
Hence, they converge to elements $u_{0}$ and
                                 $u_{1}$
in these spaces.
Finally, the uniqueness of a limit yields
   $t (u) = u_{0}$ on $\iG$ and
   $n (Au) = u_{1}$ on $\partial D \setminus \iG^{\circ}$,
i.e., $u \in H$, which completes the proof.
\end{proof}

Let $\{ U_i \}_{i \in \N}$ be a complete linearly independent system in $H$.
Applying the Gram-Schmidt orthogonalisation to $\{ U_i \}_{i \in \N}$ we get
an orthonormal basis $\{ B_i \}_{i \in \N}$ in $H$.

\begin{thm}
\label{c.mixed.good}
Let $s \geq 2m$.
Then, for every
   $w \in H^{s-2m} (D,E)$
and
$$
\begin{array}{rcl}
   u_{0}
 & \in
 & \oplus_{j=0}^{m-1}
   H^{s-m_{j}-1/2} (\iG, F_j),
                                                 \\
   u_{1}
 & \in
 & \oplus_{j=0}^{m-1}
   H^{s-m-m_{j}-1/2} (\partial D \setminus \iG^{\circ}, F_j),
\end{array}
$$
the series
$$
u = \iPhi_\varepsilon (w) + \sum_{i=1}^\infty k_i B_i
$$
converges in $\cD_{A}$ and satisfies (\ref{eq.mixed.good}),
   provided that
$$
   k_i
 = h (u - \iPhi_\varepsilon (w), B_{i}).
$$
\end{thm}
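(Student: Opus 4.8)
The plan is to read the asserted formula as the orthogonal (Parseval) expansion, in the Hilbert space $H$ of Lemma~\ref{l.HGamma.Hilbert}, of the ``solution minus particular part'' $u - \iPhi_\varepsilon(w)$, and then to upgrade its convergence from $H$ to $\cD_A$ by means of the a~priori estimate (\ref{eq.mixed.good.estimate}).

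First I would apply Theorem~\ref{t.mixed.good} to the triple $(w,u_0,u_1)$, which for $s \geq 2m$ certainly meets the regularity requirements (\ref{eq.data}), obtaining the unique solution $u \in \cD_A$ of (\ref{eq.mixed.good}). Put $U = u - \iPhi_\varepsilon(w)$. Since $\iPhi_\varepsilon$ carries $H^{s-2m}(D,E)$ continuously into $H^{s}(D,E) \subset H^{m}(D,E) \subset \cD_A$ and is a right inverse of $A^{\ast} A + \varepsilon$, we get $(A^{\ast} A + \varepsilon) U = w - w = 0$ in $D$, so that $U$ lies in $\cD_A \cap \Sol_{A^{\ast} A + \varepsilon}(D)$. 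Next I would check that in fact $U \in H$: the weak boundary values of $U$ are $t(U) = u_0 - t(\iPhi_\varepsilon(w))$ on $\iG$ and $n(AU) = u_1 - n(A\,\iPhi_\varepsilon(w))$ on $\partial D \setminus \iG^{\circ}$, and because $\iPhi_\varepsilon(w) \in H^{s}(D,E)$ has genuine Sobolev traces, the trace theorem together with the hypotheses on $u_0$ and $u_1$ show that these belong to $\oplus_{j=0}^{m-1} H^{s-m_j-1/2}(\iG,F_j)$ and $\oplus_{j=0}^{m-1} H^{s-m-m_j-1/2}(\partial D \setminus \iG^{\circ},F_j)$, respectively. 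Hence $U \in H$.

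Now, as $\{B_i\}_{i\in\N}$ is an orthonormal basis of $H$, Parseval's identity gives $U = \sum_{i=1}^{\infty} h(U,B_i)\,B_i$ with convergence in $H$, and $h(U,B_i) = h(u - \iPhi_\varepsilon(w), B_i) = k_i$. It remains to promote this to convergence in $\cD_A$. For each $N$ the remainder $S_N - U$, where $S_N = \sum_{i=1}^{N} k_i B_i$, is again an element of $H$ and is the solution of (\ref{eq.mixed.good}) corresponding to $w = 0$ and to its own boundary data, so (\ref{eq.mixed.good.estimate}) applies to it; using $s \geq 2m$ to bound the $\oplus H^{2m-m_j-1/2}(\iG,F_j)$- and $\oplus H^{m-m_j-1/2}(\partial D\setminus\iG^{\circ},F_j)$-norms of those data by their $\oplus H^{s-m_j-1/2}(\iG,F_j)$- and $\oplus H^{s-m-m_j-1/2}(\partial D\setminus\iG^{\circ},F_j)$-counterparts, one arrives at
\[
   \| S_N - U \|_{\varepsilon}^2 \leq C(\varepsilon)\, h(S_N - U, S_N - U).
\]
The right-hand side tends to $0$, so by (\ref{eq.norms}) the graph norm $D(S_N - U)$ tends to $0$, i.e. $S_N \to U$ in $\cD_A$. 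Therefore $\iPhi_\varepsilon(w) + \sum_{i=1}^{\infty} k_i B_i$ converges in $\cD_A$ to $\iPhi_\varepsilon(w) + U = u$, which satisfies (\ref{eq.mixed.good}) by the very choice of $u$. One may note in passing that $k_i$ depends only on the data, since $h(U,B_i) = (u_0 - t(\iPhi_\varepsilon(w)), t(B_i))_{\oplus H^{s-m_j-1/2}(\iG,F_j)} + (u_1 - n(A\,\iPhi_\varepsilon(w)), n(AB_i))_{\oplus H^{s-m-m_j-1/2}(\partial D\setminus\iG^{\circ},F_j)}$.

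I expect the only genuinely delicate step to be the claim $U \in H$ --- that is, controlling the weak boundary values $t(U)$ and $n(AU)$ and confirming that subtracting the genuinely $H^{s}$ section $\iPhi_\varepsilon(w)$ leaves the prescribed Sobolev regularity on $\iG$ and on $\partial D \setminus \iG^{\circ}$ intact. Everything else, namely the orthogonal expansion in $H$ and the passage to $\cD_A$ through (\ref{eq.mixed.good.estimate}) and (\ref{eq.norms}), is routine, the only arithmetic being the inequalities $2m - m_j - 1/2 \leq s - m_j - 1/2$ and $m - m_j - 1/2 \leq s - m - m_j - 1/2$, valid for $s \geq 2m$.
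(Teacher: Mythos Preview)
Your proof is correct and follows exactly the route the paper has in mind: the paper's own proof is the single sentence ``This is a direct consequence of Theorem~\ref{t.mixed.good},'' and you have spelled out precisely the details this entails --- existence of $u$ from Theorem~\ref{t.mixed.good}, membership of $U = u - \iPhi_\varepsilon(w)$ in $H$, Parseval expansion there, and the upgrade to $\cD_A$-convergence via the estimate~(\ref{eq.mixed.good.estimate}) (which, as the proof of Lemma~\ref{l.HGamma.Hilbert} already notes, gives the continuous embedding $H \hookrightarrow \cD_A$). Your identification of the step $U \in H$ as the only point requiring care is apt.
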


\begin{proof}
This is a direct consequence of Theorem \ref{t.mixed.good}.
Recall that the boundary equations
                       $t (u) = u_{0}$ on $\iG$ and
                       $n (Au) = u_{1}$ on $\partial D \setminus \iG^{\circ}$
are interpreted in the sense of (\ref{eq.proper}).
\end{proof}

From this theorem we deduce, in particular, that
$$
   \iPi_{\iG,\varepsilon} b_i
 = \sum_{q=1}^\infty k_i B_i,
$$
with the coefficients
$$
   k_i
 = (n (A b_i)), n (A B_i)
   )_{\oplus H^{s-m-m_{j}-1/2} (\partial D \setminus \iG^{\circ}, F_j)}.
$$

Knowing $\iPi_{\iG,\varepsilon} b_i$ we can find, by Lemma \ref{l.mixed.2},
the solution of Problem \ref{pr.Cauchy.perturbed} for any data
   $f \in L^2 (D,F)$ and
   $h \in L^2 (D,F)$.
Of course, if both $f$ and $h$ are smooth enough, namely $f \in H^{s-m} (D,F)$
                                                     and $h \in H^{s-2m} (D,E)$
with $s \geq 2m$, then we can determine the solution of
Problem \ref{pr.Cauchy.perturbed} directly by Theorem \ref{c.mixed.good}.

One question still unanswered is whether a complete system
   $\{ U_i \}_{i \in \N}$ in $H$
may be chosen to consists of solutions to $(A^{\ast} A + \varepsilon) u = 0$
on neighbourhoods of $\overline{D}$.
Analysis similar to that in the proof of Lemma \ref{l.dense.HDA} shows that
this is always the case if $\partial D$ is smooth enough, e.g., of class
$C^{2m-1}$.

\section{Dirac operators}
\label{s.Do}
\setcounter{equation}{0}

Let $X = \R^n$, where $n\geq 2$, and $E = \R^n \times \C^k$,
                                     $F = \R^n \times \C^l$.
The sections of $E$ are functions of $n$ real variables with values in
$\C^{k}$, and similarly for $F$.

Let $A$  be a Dirac operator, i.e., a homogeneous first order differential
operator with constant coefficients in $\R^{n}$,
$$
A = \sum_{j=1}^n A_j\, \frac{\partial}{\partial x_j},
$$
such that
\begin{equation}
\label{eq.Dirac}
(\sigma (A) (\xi))^{\ast} \sigma (A) (\xi) = |\xi|^2\, E_{k}
\end{equation}
for all $\xi \in \R^n$.
Here,
   $A_j$ are $(l \times k)\,$-matrices of complex numbers
and
   $E_{k}$ is the identity $(k \times k)\,$-matrix.

The Dirac operators satisfy $A^{\ast} A =  - E_{k}\, \Delta$, where $\Delta$
is the usual Laplace operator in $\R^n$.

The perturbed mixed problem (\ref{eq.mixed.perturbed}) reads as
$$
\left\{ \begin{array}{rclcl}
          (- \Delta + \varepsilon) u_{\varepsilon}
        & = & A^{\ast} f
        & \mbox{in}
        & D;
                                                 \\
          t (u_{\varepsilon})
        & =
        & 0
        & \mbox{on}
        & \iG,
                                                 \\
          n (A u_{\varepsilon})
        & =
        & n (f)
        & \mbox{on}
        & \partial D \setminus \iG^{\circ},
        \end{array}
\right.
$$
where
$$
n (f) = (\sigma (A) (\nabla \varrho))^{\ast} f
$$
and
   $\varrho$ is a defining function of $D$ in the sense of (\ref{eq.rho}).
Thus, this is a family of mixed problems for the Helmholtz equation.

We are going to study the (Cauchy) Problem \ref{pr.Cauchy.HD} on the unit ball
$D = \mathbb{B}$ in $\R^n$.
To this end, we pass to spherical coordinates
   $x = r\, S (\varphi)$
where $\varphi$ are coordinates on the unit sphere $\partial D = \mathbb{S}$
in $\R^{n}$.
The Laplace operator $\Delta$ in the spherical coordinates takes the form
\begin{equation}
\label{eq.Laplace.spher}
   \Delta
 = \frac{1}{r^2}
   \Big( \Big( r \frac{\partial}{\partial r} \Big)^2
       + (n-2) \Big( r \frac{\partial}{\partial r} \Big)
       - \Delta_{\mathbb{S}}
   \Big),
\end{equation}
where $\Delta_{\mathbb{S}}$ is the Laplace-Beltrami operator on the unit
sphere.

To solve the homogeneous equation
   $(- \Delta + \varepsilon) u_{\varepsilon} = 0$
we make use of the Fourier method of separation of variables.
Writing
   $u_{\varepsilon} (r,\varphi) = g (r,\varepsilon) h (\varphi)$
we get two separate equations for $g$ and
                                  $h$,
namely
\begin{eqnarray*}
   \Big( \Big( r \frac{\partial}{\partial r} \Big)^2
       + (n-2) \Big( r \frac{\partial}{\partial r} \Big)
       - \varepsilon r^{2}
   \Big) g
 & = &
   c\, g
                                                 \\
   \Delta_{\mathbb{S}} h
 & = &
   c\, h,
\end{eqnarray*}
$c$ being an arbitrary constant.

The second equation has non-zero solutions if and only if $c$ is an eigenvalue
of $\Delta_{\mathbb{S}}$.
These are well known to be $c = i (n + i - 2)$, for $i = 0, 1, \ldots$
   (see for instance \cite{TikhSamaX}).
The corresponding eigenfunctions of $\Delta_{\mathbb{S}}$ are spherical
harmonics $h_i (\varphi)$ of degree $i$, i.e.,
\begin{equation}
\label{eq.spher.harm}
   \Delta_{\mathbb{S}} h_i
 = i (n + i - 2)\, h_i.
\end{equation}

Consider now the following ordinary differential equation with respect to the
variable $r > 0$
\begin{equation}
\label{eq.Bessel}
   \Big( \Big( r \frac{\partial}{\partial r} \Big)^2
       + (n-2) \Big( r \frac{\partial}{\partial r} \Big)
       - \left( i (n + i - 2) + \varepsilon r^2 \right)
   \Big) g (r, \varepsilon)
 = 0.
\end{equation}
This is a version of the Bessel equation, and the space of its solutions is
two-dimensional.

For example, if $\varepsilon = 0$ then
   $g (r,0) = a r^i + b r^{2-i-n}$
with arbitrary constants $a$ and
                         $b$
is a general solution to (\ref{eq.Bessel}).
In this situation any function $r^i h_i (\varphi)$ is a homogeneous harmonic
polynomial.
In the general case the space of solutions to (\ref{eq.Bessel}) contains
a one-dimensional subspace of functions bounded at the point $r = 0$, cf.
\cite{TikhSamaX}.

For $i = 0, 1, \ldots$, fix a non-zero solution $g_i (r,\varepsilon)$ of
(\ref{eq.Bessel}) which is bounded at $r = 0$.
Then
\begin{equation}
\label{eq.Helmholtz.q}
   \left( - \Delta + \varepsilon \right)
   \left( g_i (r,\varepsilon) h_i (\varphi) \right)
 = 0
\end{equation}
on all of $\R^n$.
Indeed, by (\ref{eq.Laplace.spher}),
           (\ref{eq.spher.harm}) and
           (\ref{eq.Bessel})
we conclude that this equality holds in $\R^n \setminus \{ 0 \}$.
We now use the fact that $g_i (r,\varepsilon) h_i (\varphi)$ is bounded at the
origin to see that (\ref{eq.Helmholtz.q}) holds.

It is known that there are exactly
$$
J (i) = \frac{(n + 2i - 2)(n + i - 3)!}{i! (n-2)!}
$$
linearly independent spherical harmonics of degree $i$.
In \cite{Shla3} a system
$$
\{ H_{i}^{(j)} (\varphi) \}_{i = 0, 1, \ldots \atop
                             j = 1, \ldots, k\, J (i)}
$$
of $\C^{k}\,$-valued functions is constructed,
   such that
\begin{enumerate}
   \item [ $1)$ ]
the components of $H_{i}^{(j)} (\varphi)$ are spherical harmonics of degree
$i$;
   \item [ $2)$ ]
$\{ H_{i}^{(j)} (\varphi) \}$ is an orthonormal basis in
$L^2 (\mathbb{S}, E)$;
   \item [ $3)$ ]
$\{ A\, (r^i H_{i}^{(j)} (\varphi)) \}$ is an orthogonal system in
$L^2 (\mathbb{B}, F)$.
\end{enumerate}
More precisely, this system
   $\{ H_{i}^{(j)} (\varphi) \}$
consists of eigenfunctions of the operator $n \circ A$,
\begin{equation}
\label{eq.eigen.nA}
   (\sigma (A) (r S (\varphi)))^{\ast}
   A \left( r^i  H_{i}^{(j)} (\varphi) \right)
 =
   \lambda_{i}^{(j)}\, \left( r^i  H_{i}^{(j)} (\varphi) \right),
\end{equation}
where $\lambda_i^{(j)} \geq 0$.

\begin{lem}
\label{l.ort}
The system
$$
\{ b_{i}^{(j)} (r, \varphi, \varepsilon)
   := g_i (r,\varepsilon)\, H_{i}^{(j)} (\varphi)
\}_{i = 0, 1, \ldots \atop
    j = 1, \ldots, k\, J (i)}
$$
is orthogonal with respect to both Hermitian forms
   $(\cdot, \cdot)_{L^2 (\mathbb{B}, E)}$ and
   $(A \cdot, A \cdot)_{L^2 (\mathbb{B}, F)}$.
\end{lem}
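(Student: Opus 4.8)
The plan is to exploit the separation-of-variables structure: since $b_{i}^{(j)} = g_i(r,\varepsilon)\, H_{i}^{(j)}(\varphi)$ is a product of a radial factor and a spherical harmonic, the $L^2$-inner products over the ball $\mathbb{B}$ will factor into a product of a radial integral in $r$ and an angular integral over the sphere $\mathbb{S}$. The orthogonality with respect to $(\cdot,\cdot)_{L^2(\mathbb{B},E)}$ should then follow almost immediately from property $2)$ of the system $\{H_{i}^{(j)}(\varphi)\}$, namely that it is an orthonormal basis in $L^2(\mathbb{S},E)$. The harder of the two assertions is orthogonality with respect to $(A\cdot, A\cdot)_{L^2(\mathbb{B},F)}$, and this is where I would spend most of the effort.

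First I would treat the $L^2(\mathbb{B},E)$-orthogonality. Passing to spherical coordinates $x = r\,S(\varphi)$, the volume element becomes $r^{n-1}\,dr\,ds(\varphi)$ where $ds$ is the surface measure on $\mathbb{S}$. Hence
\begin{eqnarray*}
   (b_{i}^{(j)}, b_{i'}^{(j')})_{L^2 (\mathbb{B}, E)}
 & = &
   \int_0^1 g_i (r,\varepsilon)\, \overline{g_{i'} (r,\varepsilon)}\, r^{n-1}\, dr
   \int_{\mathbb{S}} (H_{i}^{(j)} (\varphi), H_{i'}^{(j')} (\varphi))_{\varphi}\, ds (\varphi).
\end{eqnarray*}
The angular factor vanishes unless $(i,j) = (i',j')$ by property $2)$ (distinct spherical harmonics of possibly different degrees are orthogonal in $L^2(\mathbb{S})$), which gives the first orthogonality relation at once.

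For the second form I would first reduce to what happens when $i = i'$, using the eigenfunction relation $(\ref{eq.eigen.nA})$ together with a Green-type identity. The idea is that for each fixed degree $i$ the functions $A\,(g_i(r,\varepsilon) H_i^{(j)}(\varphi))$ should be proportional in the angular variable to $A\,(r^i H_i^{(j)}(\varphi))$ up to a radial scalar, so that property $3)$ (the system $\{A\,(r^i H_i^{(j)}(\varphi))\}$ is orthogonal in $L^2(\mathbb{B},F)$) transfers the orthogonality. For distinct degrees $i \ne i'$ I would use the Dirac identity $A^{\ast}A = -E_k\,\Delta$: integrating by parts on $\mathbb{B}$,
$$
   (A b_{i}^{(j)}, A b_{i'}^{(j')})_{L^2 (\mathbb{B}, F)}
 = (b_{i}^{(j)}, A^{\ast} A b_{i'}^{(j')})_{L^2 (\mathbb{B}, E)}
 + \int_{\mathbb{S}} (A b_{i}^{(j)}, \sigma(A)(\nabla\varrho)\, b_{i'}^{(j')})_{\varphi}\, ds (\varphi),
$$
and then invoke $(\ref{eq.Helmholtz.q})$, i.e. $(-\Delta+\varepsilon) b_{i'}^{(j')} = 0$, so that $A^{\ast}A b_{i'}^{(j')} = -\varepsilon\, b_{i'}^{(j')}$. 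The volume term is then a scalar multiple of $(b_{i}^{(j)}, b_{i'}^{(j')})_{L^2(\mathbb{B},E)}$, which already vanishes for $(i,j)\ne(i',j')$; the boundary term again factors through the angular integral $\int_{\mathbb{S}}(H_i^{(j)}, \,\cdot\, H_{i'}^{(j')})\,ds$ after using $(\ref{eq.eigen.nA})$ on the sphere $r=1$, and this vanishes for distinct degrees by property $2)$.

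The main obstacle I anticipate is the bookkeeping in the boundary term: one must check that $n(A b_{i'}^{(j')}) = (\sigma(A)(\nabla\varrho))^{\ast} A b_{i'}^{(j')}$ restricted to $\mathbb{S}$ is still an eigenfunction-type expression, i.e. that evaluating $(\ref{eq.eigen.nA})$ at $r=1$ (where $\nabla\varrho$ is the outward radial direction $S(\varphi)$) gives $n(A b_{i'}^{(j')})\big|_{\mathbb{S}} = \lambda_{i'}^{(j')}\, g_{i'}(1,\varepsilon)\, H_{i'}^{(j')}(\varphi)$, and similarly handle the factor $A b_i^{(j)}$ by homogeneity in $r$. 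Once the angular dependence of every term is isolated as an inner product of $H_i^{(j)}$ against $H_{i'}^{(j')}$ over $\mathbb{S}$, property $2)$ closes the argument; the remaining case $i = i'$, $j \ne j'$ is exactly property $3)$ after absorbing the radial scalars. I would write the proof so that the radial integrals are never evaluated explicitly — only their finiteness (guaranteed since $g_i$ is bounded near $r=0$) and the fact that they are common scalar factors matter.
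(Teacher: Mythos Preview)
Your treatment of the $L^{2}(\mathbb{B},E)$-orthogonality is correct and identical to the paper's. Your integration-by-parts reduction for the second form is also the paper's strategy, and you correctly observe that the volume term collapses via the Helmholtz equation and the $L^{2}$-orthogonality just established. The gap is in the boundary term and in the case $i=i'$, $j\ne j'$.

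Your proposed formula $n(A b_{i'}^{(j')})\big|_{\mathbb{S}} = \lambda_{i'}^{(j')}\, g_{i'}(1,\varepsilon)\, H_{i'}^{(j')}$ is incorrect: the eigenfunction relation (\ref{eq.eigen.nA}) is stated for $r^{i}H_{i}^{(j)}$, not for $g_{i}(r,\varepsilon)H_{i}^{(j)}$, and these two do \emph{not} differ by a radial scalar after applying $n\circ A$, because $n\circ A$ contains a radial derivative. For the same reason, your claim that $A(g_{i}H_{i}^{(j)})$ is proportional in the angular variable to $A(r^{i}H_{i}^{(j)})$ is false, so the case $i=i'$, $j\ne j'$ does not reduce to property~3) by ``absorbing the radial scalars.''

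What the paper does instead is compute $n\circ A$ in spherical coordinates, obtaining a decomposition $n\circ A = r\,\partial_{r} + R(\varphi,\partial_{\varphi})$ with $R$ purely tangential. Applying this to $r^{i}H_{i}^{(j)}$ and comparing with (\ref{eq.eigen.nA}) extracts the angular eigenvalue relation $R\,H_{i}^{(j)} = (\lambda_{i}^{(j)}-i)\,H_{i}^{(j)}$, which then gives the correct formula
\[
   n(A b_{i}^{(j)})
   \;=\;
   \bigl(r\, g_{i}^{\prime} + (\lambda_{i}^{(j)}-i)\, g_{i}\bigr)\, H_{i}^{(j)}.
\]
Hence the boundary term is a scalar multiple of $(H_{i}^{(j)},H_{p}^{(q)})_{L^{2}(\mathbb{S},E)}$ and vanishes for \emph{all} $(i,j)\ne(p,q)$ by property~2) alone. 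Property~3) is never invoked, and your case split $i=i'$ versus $i\ne i'$ becomes unnecessary once this computation is in hand.
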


\begin{proof}
Indeed, as
   $\{ H_{i}^{(j)} \}$
is an orthonormal basis in the space $L^2 (\mathbb{S}, E)$ on the unit
sphere, the system
   $\{ b_{i}^{(j)} \}$
is orthogonal in $L^2 (\mathbb{B}, E)$ because
\begin{eqnarray*}
   (b_{i}^{(j)}, b_{p}^{(q)})_{L^2 (\mathbb{B}, E)}
 & = &
   (H_{i}^{(j)}, H_{p}^{(q)})_{L^2 (\mathbb{S}, E)}
   \int_0^1 r^{n-1} g_i (r,\varepsilon) \overline{g_p (r,\varepsilon)}\, dr                                                 \\
 & = &
   0
\end{eqnarray*}
for $i \ne p$ or
    $j \ne q$.

Further, integrating by parts we get
\begin{equation}
\label{eq.ort.A1}
   ( A b_{i}^{(j)}, A b_{p}^{(q)} )_{L^2 (\mathbb{B}, F)}
 =
 - ( b_{i}^{(j)}, \Delta b_{p}^{(q)} )_{L^2 (\mathbb{B}, E)}
 + g_i (1,\varepsilon)\,
   ( H_{i}^{(j)}, n (A b_{p}^{(q)}) )_{L^2 (\mathbb{S}, E)}.
\end{equation}
On the other hand, (\ref{eq.Helmholtz.q}) implies
\begin{equation}
\label{eq.ort.A2}
 -\,
   ( b_{i}^{(j)}, \Delta b_{p}^{(q)} )_{L^2 (\mathbb{B}, E)}
 + \varepsilon\, ( b_{i}^{(j)}, b_{p}^{(q)} )_{L^2 (\mathbb{B}, E)}
 = 0
\end{equation}
for $i \ne p$ or
    $j \ne q$.

Let us write the expression $n \circ A$ in spherical coordinates.
Denote by $S' (\varphi)$ the Jacobi matrix of $S (\varphi)$.
Set
$$
   \left( S' (\varphi) \right)^{-1}
 :=
   \left( \left( S' (\varphi) \right)^T S' (\varphi)
   \right)^{-1}
   \left( S' (\varphi) \right)^T.
$$
Since the rank of $S' (\varphi)$ is equal to $n-1$, the inverse matrix of
$\left( S' (\varphi) \right)^T S' (\varphi)$ exists and is smooth.
Moreover,
   $\left( S' (\varphi) \right)^{-1}$
is a left inverse for $S' (\varphi)$.
An easy calculation shows that
$$
   \frac{\partial}{\partial x_j}
 = S_j (\varphi)\, \frac{\partial}{\partial r}
 + \frac{1}{r}\,
   \sum_{i=1}^{n-1}
   \left( S' (\varphi) \right)^{-1}_{i,j} \frac{\partial}{\partial \varphi_i}
$$
where $\left( S' (\varphi) \right)^{-1}_{i,j}$ is the $(i,j)\,$-entry of
      $\left( S' (\varphi) \right)^{-1}$.

Now (\ref{eq.Dirac}) implies
\begin{equation}
\label{eq.nA.spher}
   n \circ A
 =
   \sum_{k=1}^n A_k^{\ast}\, r S_k (\varphi)\,
   \sum_{j=1}^n A_j\, \frac{\partial}{\partial x_j}
 =
   r \frac{\partial}{\partial r} + R (\varphi, \partial_{\varphi})
\end{equation}
where
$$
   R (\varphi, \partial_{\varphi})
 = \sum_{k=1}^n A_k^{\ast}\, S_k (\varphi)\,
   \sum_{j=1}^n A_j\,
   \sum_{i=1}^{n-1}
   \left( S' (\varphi) \right)^{-1}_{i,j} \frac{\partial}{\partial \varphi_i}.
$$
Using (\ref{eq.eigen.nA}) and
      (\ref{eq.nA.spher})
we conclude that
\begin{eqnarray*}
   \lambda_{i}^{(j)} (r^i H_{i}^{(j)} (\varphi))
 & = &
   n (A (r^i H_{i}^{(j)} (\varphi)))
                                                 \nonumber
                                                 \\
 & = &
   i\, r^i\,   H^{(j)}_i (\varphi)
 + r^i\, R (\varphi, \partial_{\varphi}) H^{(j)}_i (\varphi).
\end{eqnarray*}
Hence
$$
   R (\varphi, \partial_{\varphi}) H^{(j)}_i (\varphi)
 = \left( \lambda_{i}^{(j)} - i \right) H_{i}^{(j)} (\varphi),
$$
and so (\ref{eq.nA.spher}) yields
\begin{eqnarray}
\label{eq.nA.Bessel}
   n (A b_{i}^{(j)})
 & = &
   r\, g_{i}^{\prime}\,  H_{i}^{(j)}
 + g _i\,  R (\varphi, \partial_{\varphi}) H_{i}^{(j)}
                                                 \\
 & = &
   \left( r g_{i}^{\prime} + (\lambda_{i}^{(j)} - i) g_i \right) H_{i}^{(j)}.
                                                 \nonumber
\end{eqnarray}
Therefore,
\begin{equation}
\label{eq.ort.A3}
   ( H_{i}^{(j)}, n (A b_{p}^{(q)}) )_{L^2 (\mathbb{S}, E)}
 = 0
\end{equation}
for $i \ne p$ or $j \ne q$.

Combining (\ref{eq.ort.A1})
          (\ref{eq.ort.A2}) and
          (\ref{eq.ort.A3})
we see that the system $\{ b_{i}^{(j)} \}$ is orthogonal with respect to
$(A \cdot, A \cdot)_{L^2 (\mathbb{B}, F)}$.
\end{proof}

\begin{rem}
\label{r.non-zero}
Note that
$
   g_i^{\prime} (1,\varepsilon) + (\lambda_{i}^{(j)} - i) g_i (1,\varepsilon)
 \ne 0
$
for all $\varepsilon > 0$.
Indeed, otherwise $n (A b_{i}^{(j)}) = 0$ on $\mathbb{S}$ and
   (\ref{eq.ort.A1}),
   (\ref{eq.ort.A2})
would imply $b_{i}^{(j)} \equiv 0$, which is wrong.
\end{rem}

\begin{thm}
\label{p.basis.gamma}
For every $\delta > 0$, the system
$$
   \{ b_{i}^{(j)} (r, \varphi, \varepsilon) \}_{i = 0, 1, \ldots \atop
                                                j = 1, \ldots, k\, J (i)}
$$
is an orthogonal basis in the space
   $\cD_{A} \cap \Sol_{- \Delta + \varepsilon E_{k}} (\mathbb{B})$
with the scalar product $(\cdot, \cdot)_{\delta}$.
\end{thm}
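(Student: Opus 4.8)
The plan is to split the statement into orthogonality — which is immediate — and completeness, and to reduce completeness to an approximation result on the unit ball handled by the maximum principle. Each $b_{i}^{(j)} (\cdot, \cdot, \varepsilon)$ is, by the discussion around (\ref{eq.Helmholtz.q}), real-analytic on all of $\R^{n}$ and satisfies $(- \Delta + \varepsilon)\, b_{i}^{(j)} = 0$ in $\mathbb{B}$, hence belongs to $\cD_{A} \cap \Sol_{- \Delta + \varepsilon E_{k}} (\mathbb{B})$. Since $(\cdot, \cdot)_{\delta} = (A \cdot, A \cdot)_{L^{2} (\mathbb{B}, F)} + \delta\, (\cdot, \cdot)_{L^{2} (\mathbb{B}, E)}$, Lemma \ref{l.ort} shows at once that $\{ b_{i}^{(j)} \}$ is an orthogonal system for $(\cdot, \cdot)_{\delta}$, and $\| b_{i}^{(j)} \|_{\delta}^{2} \geq \delta\, \| b_{i}^{(j)} \|_{L^{2} (\mathbb{B}, E)}^{2} > 0$, so no element vanishes. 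It remains to show that the linear span $M$ of $\{ b_{i}^{(j)} \}$ is dense in $\cD_{A} \cap \Sol_{- \Delta + \varepsilon E_{k}} (\mathbb{B})$; by (\ref{eq.norms}) this density does not depend on the chosen $\delta > 0$.

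For completeness I would invoke Lemma \ref{l.dense.HDA}. Here $A^{\ast} A = - E_{k} \Delta$, so $A^{\ast} A + \varepsilon = - \Delta + \varepsilon E_{k}$ is elliptic of order $2m = 2$ with constant, hence real-analytic, coefficients, and therefore possesses the Unique Continuation Property $(U)_{s}$; moreover $M \subset \Sol_{- \Delta + \varepsilon E_{k}} (\R^{n})$ by (\ref{eq.Helmholtz.q}). Thus Lemma \ref{l.dense.HDA} reduces the task to showing that $M$ is dense, in the uniform norm, in $C^{m-1} (\overline{\mathbb{B}}, E) \cap \Sol_{- \Delta + \varepsilon E_{k}} (\mathbb{B})$, which for $m = 1$ is the space of continuous $\C^{k}\,$-valued solutions of $(- \Delta + \varepsilon)\, u = 0$ on $\overline{\mathbb{B}}$.

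This last density rests on three elementary facts. If $u$ solves $(- \Delta + \varepsilon)\, u = 0$ in $\mathbb{B}$ and is continuous on $\overline{\mathbb{B}}$, then $\Delta |u|^{2} = 2 \varepsilon |u|^{2} + 2 \sum_{\ell = 1}^{k} |\nabla u_{\ell}|^{2} \geq 0$, so $|u|^{2}$ is subharmonic and the maximum principle gives $\| u \|_{C (\overline{\mathbb{B}})} = \| u \|_{C (\mathbb{S})}$. Next, $g_{i} (1, \varepsilon) \neq 0$ for every $i$: otherwise $b_{i}^{(j)}$ would be a non-zero function in $C^{\infty} (\overline{\mathbb{B}}, E)$ solving $(- \Delta + \varepsilon)\, u = 0$ and vanishing on $\mathbb{S}$, and Green's formula would give $\int_{\mathbb{B}} \big( |\nabla b_{i}^{(j)}|^{2} + \varepsilon |b_{i}^{(j)}|^{2} \big)\, dx = 0$, forcing $b_{i}^{(j)} \equiv 0$. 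Finally, for each $i$ the $k J (i)$ functions $H_{i}^{(j)}$ are linearly independent, being part of an orthonormal system in $L^{2} (\mathbb{S}, E)$, and their components are spherical harmonics of degree $i$; hence they span the whole $k J (i)\,$-dimensional space of $\C^{k}\,$-valued functions on $\mathbb{S}$ with components of that type, so $\bigcup_{N} \mathrm{span}\, \{ H_{i}^{(j)} :\ i \leq N \}$ is the set of restrictions to $\mathbb{S}$ of $\C^{k}\,$-valued polynomials, which is dense in $C (\mathbb{S}, \C^{k})$ by the Stone--Weierstrass theorem. Given $u$ in the target space and $\eta > 0$, I would then pick a finite combination $w = \sum \alpha_{i}^{(j)} H_{i}^{(j)}$ with $\| u|_{\mathbb{S}} - w \|_{C (\mathbb{S})} < \eta$ and put $u_{\eta} = \sum \big( \alpha_{i}^{(j)} / g_{i} (1, \varepsilon) \big)\, b_{i}^{(j)} \in M$; then $u_{\eta}|_{\mathbb{S}} = w$, so applying the first fact to $u - u_{\eta}$ yields $\| u - u_{\eta} \|_{C (\overline{\mathbb{B}})} < \eta$. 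This establishes the density, and together with the orthogonality it proves the theorem.

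The only step that is not purely formal is this last $C^{m-1}\,$-approximation on the solid ball; once Lemmata \ref{l.ort} and \ref{l.dense.HDA} are in place, everything there reduces to the maximum principle for $- \Delta + \varepsilon$ (which uses $\varepsilon > 0$), the non-vanishing of $g_{i} (1, \varepsilon)$, and the classical completeness of spherical harmonics.
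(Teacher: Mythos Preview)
Your proof is correct and follows essentially the same route as the paper: orthogonality from Lemma~\ref{l.ort}, reduction to a single $\delta$ via (\ref{eq.norms}), and completeness via Lemma~\ref{l.dense.HDA} combined with density of $\{ H_{i}^{(j)} \}$ on the sphere. The paper simply asserts the $C^{m-1}$-density step (``since the system of harmonics $\{ H_{i}^{(j)} \}$ is dense in $C^{m-1} (\mathbb{S}, E)$ we see that $\{ b_{i}^{(j)} \}$ is dense in $C^{m-1} (\overline{\mathbb{B}}, E) \cap \Sol_{-\Delta + \varepsilon E_{k}} (\mathbb{B})$''), whereas you have spelled out the mechanism --- maximum principle, $g_{i} (1,\varepsilon) \neq 0$, and Stone--Weierstrass --- that makes that implication work.
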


\begin{proof}
The orthogonality follows immediately from Lemma \ref{l.ort}.
As for the  completeness of the system
   $\{ b_{i}^{(j)} \}$
in $\cD_{A} \cap \Sol_{- \Delta + \varepsilon E_{k}} (\mathbb{B})$, we observe
that the estimates (\ref{eq.norms}) guarantee that every scalar product
   $(\cdot,\cdot)_\delta$
with $\delta > 0$ induces in $\cD_{A}$ the same topology as $D (\cdot,\cdot)$.
Hence it is sufficient to prove the completeness for $\delta = 1$.
Finally, since the system of harmonics $\{ H_i^{(j)} \}$ is dense in
                                       $C^{m-1} (\mathbb{S}, E)$
we see that $\{ b_i^{(j)} \}$ is dense in
   $C^{m-1} (\mathbb{S}, E) \cap
    \Sol_{- \Delta + \varepsilon E_{k}} (\mathbb{B})$.
Then the completeness is a consequence of Lemma \ref{l.dense.HDA}.
\end{proof}

As a fundamental solution $\iPhi_\varepsilon (x,y)$ of the operator
   $- \Delta + \varepsilon$
in $\R^3$ we may choose one of the standard kernels
$$
\iPhi_\varepsilon (x,y) = e^{\pm \sqrt{\varepsilon} |x-y|}.
$$
In $\R^2$ we can take as $\iPhi_\varepsilon (x,y)$ a Hankel function,
   see for instance \cite{TikhSamaX}.

\begin{exmp}
\label{eq.d}
Let $A = \nabla$ be the gradient operator in $\R^{n}$.
For every domain $D \subset \subset \R^n$, we have $\cD_{A} = H^{1} (D)$.
Since the estimate (\ref{eq.well-posed}) holds true for $\nabla$
   (see \cite{Mikh(ailov)X}),
the (Cauchy) Problem \ref{pr.Cauchy.HD} is well posed in $\cD_{T}$.
In this case $k = 1$,
             $l = n$,
   $A^{\ast} = - \mathrm{div}$ is a multiple of the divergence operator in
   $\R^n$
and
$$
   n \circ A
 = |x|\, \frac{\partial}{\partial n}
 = r\, \frac{\partial}{\partial r}
$$
where $\partial / \partial n$ is the derivative along the outward unit normal
vector to $\partial D$.
In particular, this means that every homogeneous harmonic polynomial $r^i h_i$
is an eigenfunction of $n \circ A$ corresponding to the eigenvalue
$\lambda_i = i$.
For example, in $\R^2$ we can take
\begin{eqnarray*}
   b_0^{(1)}
 & = &
   \frac{1}{\sqrt{2 \pi}}\, g_0 (r,\varepsilon),
                                                 \\
   b_i^{(1)}
 & = &
   \frac{1}{\sqrt{\pi}}\, g_i (r,\varepsilon) \cos (i \varphi),
                                                 \\
   b_i^{(2)}
 & = &
   \frac{1}{\sqrt{\pi}}\, g_i (r,\varepsilon) \sin (i \varphi),
\end{eqnarray*}
where $g_i$ are Hankel's functions.
In the case $s = 5/2$ and
            $\iG = \{ r=1,\,  \varphi \in [0,\pi] \}$
the Gram-Schmidt orthogonalisation in $H$ gives
\begin{eqnarray*}
   B_0^{(1)}
 & = &
   \frac{g_0 (r,\varepsilon)}
        {\sqrt{\pi} \sqrt{|g_0 (1,\varepsilon)|^2
                        + |g_0^{\prime} (1,\varepsilon)|^2}},
                                                 \\
   B_1^{(1)}
 & = &
   \frac{2\, g_1 (r,\varepsilon) \cos \varphi}
        {\sqrt{\pi} \sqrt{|g_1 (1,\varepsilon)|^2
                        + |g_1^{\prime} (1,\varepsilon)|^2}},
                                                 \\
   B_1^{(2)}
 & = &
   \frac{2 a g_0 (r,\varepsilon) + \sqrt{\pi} g_1 (r,\varepsilon) \sin \varphi}
        {\sqrt{b}},
\end{eqnarray*}
with
\begin{eqnarray*}
   a
 & = &
   g_0 (1,\varepsilon) g_1 (1,\varepsilon)
 - g_0^{\prime} (1,\varepsilon) g_1^{\prime} (1,\varepsilon),
                                                 \\
   b
 & = &
   \frac{3}{2}\, \pi^{2}
 + 4 a (1 + |g_0 (1,\varepsilon)|^2 + |g_0^{\prime} (1,\varepsilon)|^2),
\end{eqnarray*}
and so on.
\end{exmp}

\begin{exmp} \label{eq.CR}
Let
   $A := \partial_{1} + \sqrt{-1} \partial_{2}$
be ($2\,$-multiple of) the Cauchy-Riemann operator in $\C$.
Then the (Cauchy) Problem \ref{pr.Cauchy.HD} is ill-posed in $\cD_{T}$.
In this case $k = l = 1$,
             $A^{\ast} = - \partial_{1} + \sqrt{-1} \partial_{2}$
and
$$
   n \circ A
 = \bar{z}\, \bar{\partial}
 = r\, \frac{\partial}{\partial r}
 + \sqrt{-1}\, \frac{\partial}{\partial \varphi}
$$
hold.
The system $\{ b_i^{(j)} \}$ may be chosen as follows
\begin{eqnarray*}
   b_0^{(1)}
 & = &
   \frac{1}{\sqrt{2 \pi}}\, g_0 (r,\varepsilon),
                                                 \\
   b_i^{(1)}
 & = &
   \frac{1}{\sqrt{\pi}}\, g_i (r,\varepsilon) e^{\sqrt{-1}\, i \varphi},
                                                 \\
   b_i^{(2)}
 & = &
   \frac{1}{\sqrt{\pi}}\, g_i (r,\varepsilon) e^{-\sqrt{-1}\, i \varphi},
\end{eqnarray*}
with $\lambda_0^{(1)} = 0$,
     $\lambda_i^{(1)} = 0$ and
     $\lambda_i^{(2)} = 2 i$.
\end{exmp}

{\it Acknowledgements\,}
This article was written during the stay of the first author at the Institute
of Mathematics, University of Potsdam.
He gratefully acknowledges the financial support of
   the Deutscher Akademischer Austauschdienst and
   the RFBR grant 02--01--00167.

\newpage


\end{document}